\documentclass[journal,twoside,web]{ieeecolor}
\usepackage{generic}
\usepackage{cite}
\usepackage{amsmath,amssymb,amsfonts}
\usepackage{algorithmic}
\usepackage{graphicx}
\usepackage{circuitikz}
\usepackage{algorithm,algorithmic}
\usepackage{hyperref}
\hypersetup{hidelinks=true}
\usepackage{textcomp}

\usepackage{tikz}
\usetikzlibrary{arrows.meta, positioning}
\usepackage{pgfplots}
\pgfplotsset{compat=1.18}

\let\oldlabelindent\labelindent  % Save the original definition of \labelindent
\let\labelindent\relax  % relax the original definition of \labelindent
\usepackage{enumitem}  % Load enumitem
\let\labelindent\oldlabelindent

\newtheorem{definition}{Definition}

\newtheorem{theorem}{Theorem}
\newtheorem{lemma}{Lemma}
\newtheorem{corollary}{Corollary}
\newtheorem{proposition}{Proposition}
\newtheorem{assumption}{Assumption}
\newtheorem{problem}{Problem}
\newtheorem{example}{Example}

\DeclareMathOperator{\vecrz}{vec}

\DeclareMathOperator{\trace}{tr}
\def\BibTeX{{\rm B\kern-.05em{\sc i\kern-.025em b}\kern-.08em
    T\kern-.1667em\lower.7ex\hbox{E}\kern-.125emX}}
\begin{document}
\title{A New Noise Model for Data-driven Control:\\ Generalized Frobenius Norm Bounds}
\author{Huayuan Huang, M. Kanat Camlibel, \IEEEmembership{Senior Member, IEEE}, and Henk J. van Waarde, \IEEEmembership{Member, IEEE}
\thanks{This work was supported by the China Scholarship Council 202106340040. Henk J. van
Waarde acknowledges financial support by the Dutch Research Council (NWO) under the Talent Programme Veni Agreement (VI.Veni.22.335). \textit{(Corresponding author: Huayuan Huang.)}}
\thanks{The authors are with the Bernoulli Institute for Mathematics, Computer Science, and Artificial Intelligence, University of Groningen, 9747 AG Groningen, The Netherlands (email: huayuan.huang@rug.nl; m.k.camlibel@rug.nl; h.j.van.waarde@rug.nl)}}

\maketitle
\begin{abstract}
In this article, we introduce a new noise model for data-driven control. The model can be interpreted as a generalization of a Frobenius norm bound on the matrix of noise samples. For instantaneously bounded noise, the proposed model provides a less conservative overapproximation than an existing noise model based on a quadratic matrix inequality (QMI). Using the new model, we derive necessary and sufficient conditions for data-driven control. The framework covers a broad class of design problems, including quadratic stabilization, $\mathcal{H}_2$ control and $\mathcal{H}_{\infty}$ control, and is further extended to cover data-driven analysis problems, ranging for stabilizability to dissipativity. A key technical contribution is a new type of S-lemma that offers necessary and sufficient conditions under which a quadratic matrix inequality is implied by a quadratic inequality in vectorized variables.
\end{abstract}

\begin{IEEEkeywords}
Data-driven control, data informativity, LMIs, robust control. 
\end{IEEEkeywords}

\section{Introduction}
Research on data-driven control has gained significant attention in recent years \cite{hou2013modeltodata, markovsky2021behavioral, berberich2022combining, dorfler2022bridging, dorfler2023data, Henkbook}. This field can be broadly classified into methods that combine system identification with model-based control and methods that directly map data to control problems without an intermediate modeling step. This work concerns the latter category, which is known as direct data-driven control. By bypassing explicit model identification, direct data-driven control offers a robust alternative to model-based methods, particularly when the system cannot be uniquely identified from the available data. Its applications span stabilization \cite{DDCFormulas}, output regulation \cite{trentelman2021algebraicregulator}, model predictive control \cite{berberich2021data} and dissipativity analysis \cite{dissipativity2022}. In this context, a key challenge is that the available data are corrupted by unknown noise \cite{DDCFormulas, DDCStuggatt2020, dai2023data, martin2024Pointwiseboundednoise}. The present paper studies both analysis and design problems using noisy data.

In the literature on data-driven control, a common approach is to work with deterministic bounded noise. In this context, a substantial body of work assumes that the matrix of noise samples satisfies a QMI \cite{DDCStuggatt2020, verhoek2024LPV, eising2024continuous, BIANCHI2025switched}. Under this QMI-based characterization of noise, \cite{DDCmatrixSlemma} establishes data-based linear matrix inequality (LMI) conditions for a class of design problems that can be formulated as QMIs. Key advantages of this approach are the development of necessary and sufficient conditions for data-driven control, and the fact that the conditions can be checked efficiently because the size of the involved decision variables are independent of the time horizon of the experiment. Beyond the scope of \cite{DDCmatrixSlemma}, \cite{steentjes2022crosscovariance} considers cross-covariance bounded noise, while \cite{bisoffi2022data} and \cite{DDCQMI} study scenarios in which the generalized Slater condition required in \cite{DDCmatrixSlemma} is not satisfied. Subsequently, \cite{bisoffi2024controller} considers measurement errors and \cite{dataperturbation} introduces more general data perturbations. Moreover, \cite{kristovic2024dualization} and \cite{nakano2025dissipativity} study design problems using dissipativity techniques. In addition to design problems, also analysis problems have been studied, ranging from stabilizability \cite{stabilizability2022} and detectability \cite{detectability2022} to dissipativity \cite{koch2021provablydissipativity, dissipativity2022}.

Besides QMI-based characterizations, an alternative model assumes that the noise samples are bounded at every time step. Such instantaneous noise models are considered in several works \cite{martin2021dissipativity, bisoffi2024controller, martin2024Pointwiseboundednoise}, that impose a Euclidean norm bound on each noise sample. As of yet, there are no tractable necessary and sufficient conditions for data-driven analysis and control available with respect to this particular noise model. As noted in \cite{DDCmatrixSlemma} and further studied in \cite{bisoffi2021trade}, a (lossy) S-procedure \cite{SurveySlemma} can be applied to establish sufficient conditions for data-driven stabilization. However, a potential limitation of this approach is that the number of decision variables scales with the number of data samples \cite{martin2021dissipativity, bisoffi2021trade}. Another class of instantaneous noise bounds is considered in \cite{dai2023data, miller2023positive, DDCDensity}, which investigate noise samples whose infinity norm is bounded. The resulting conditions suffer from a similar limitation, as the dimensions of the decision variables depend on the time horizon of the experiment.

Beyond the aforementioned models, another fundamental characterization of noise is based on energy constraints that bound the \emph{sum} of squared Euclidean norms of the noise samples. Although such constraints have been widely studied in several fields, including state estimation \cite{bertsekas1971recursive} and set-membership identification \cite{fogel1979system, lahr2026kernelboundenergy}, they have received limited attention in the recent literature on data-driven control. These energy constraints over a finite time horizon are equivalent to a Frobenius norm bound on the noise matrix. Existing QMI-based models capture spectral norm bounds on the noise matrix, resulting in a conservative overapproximation of the true energy constraints. Instantaneous models generally fail to capture the collective properties of a noise sequence. This motivates the study of data-driven control under noise models based on Frobenius norm bounds. This problem is much more challenging than data-driven control with QMI-based noise models. In fact, stability and performance guarantees are naturally expressed in terms of QMIs in the uncertain system matrices \cite{DDCmatrixSlemma}. However, Frobenius norm bounds on the noise matrix lead to inequalities in terms of the \emph{vectorized} system matrices, rendering existing matrix S-lemmas \cite{DDCQMI} inapplicable.

In this paper, we introduce a new noise model that can be interpreted as a generalization of a Frobenius norm bound on the noise matrix. Within this framework, energy constraints are captured as a special case. We study data-driven analysis and control design for discrete-time linear time-invariant (LTI) systems with respect to this new noise model. In particular, we first consider a broad class of design problems that, for fixed control parameters, lead to QMIs in the system matrices. By specializing this setting, we derive necessary and sufficient conditions for quadratic stabilization, $\mathcal{H}_2$ control and $\mathcal{H}_{\infty}$ control. The resulting necessary and sufficient conditions are phrased in terms of data-based LMIs. The size of the involved decision variables is independent of the time horizon of the experiment, akin to approaches based on the QMI noise model. The framework is then extended to data-driven analysis. In particular, we provide necessary and sufficient conditions to verify quadratic stabilizability and dissipativity of all data-compatible systems. Finally, we compare our new noise model with existing ones. We show that the proposed model leads to a less conservative overapproximation of instantaneous noise bounds than QMI-based characterizations. The effectiveness of this new overapproximation will be demonstrated in numerical experiments.

A fundamental theoretical contribution of this work is the development of a new type of S-lemma, which serves as the mathematical cornerstone for our data-driven approaches. The new S-lemma provides necessary and sufficient conditions under which a QMI is implied by a quadratic inequality in vectorized variables. This theoretical result allows us to handle the generalized Frobenius norm structure of our noise model without sacrificing the necessity of the resulting analysis and design conditions.

To summarize, our main contributions are as follows:
\begin{enumerate}
    \item We introduce a new noise model for data-driven control, which is a generalization of Frobenius norm bounds. For instantaneously bounded noise, the new model provides a less conservative overapproximation than existing QMI-based models.
    \item We develop data-driven approaches under the new noise model for a broad class of control design problems, including quadratic stabilization, $\mathcal{H}_2$ control and $\mathcal{H}_{\infty}$ control.
    \item We extend the framework to data-driven analysis problems, providing necessary and sufficient conditions for quadratic stabilizability and dissipativity of all data-compatible systems.
    \item As an auxiliary result, we establish a new type of S-lemma that provides necessary and sufficient conditions under which a QMI is implied by a quadratic inequality in vectorized variables.
\end{enumerate}

The remainder of this paper is organized as follows. Section~\ref{section Preliminaries} discusses the preliminaries and the motivation for this work. The new noise model is presented in Section~\ref{section A new noise model}, followed by the reformulation of the set of data-compatible systems in Section~\ref{section The set of data compatible systems}. Section~\ref{section Methodology} develops the methodology, which is then applied to data-driven design and analysis in Sections~\ref{section Data-driven control design} and~\ref{section data driven analysis}, respectively. Section~\ref{section Illustrative examples} provides illustrative examples. Finally, Section~\ref{section Conclusion} concludes the paper.

\subsection*{Notation}
The Euclidean norm of a vector $y \in \mathbb{R}^p$ is denoted by $\|y\|_2$, while the spectral and Frobenius norms of a matrix $Y \in \mathbb{R}^{p \times q}$ are denoted by $\|Y\|_{2}$ and $\|Y\|_{\mathrm{F}}$, respectively. Moreover, $Y^{\dagger}$ denotes the \textit{Moore-Penrose pseudoinverse} of $Y$. Partition $Y = \begin{bmatrix}
    y_1 \!&\! y_2 \!&\! \cdots \!&\! y_q
\end{bmatrix}$, where $y_i \in \mathbb{R}^p$ for $i = 1, 2, \dots, q$. The vectorization of $Y$ is defined as $\vecrz(Y) = \begin{bmatrix}
	y_1^{\top} \!&\! y_2^{\top} \!&\! \cdots \!&\! y_q^{\top}
\end{bmatrix}^{\top}$. The Kronecker product of matrices $A \in \mathbb{R}^{k \times l}$ and $B \in \mathbb{R}^{m\times n}$ is denoted by $A \otimes B \in \mathbb{R}^{km\times ln}$. Let $X \in \mathbb{R}^{l\times m}$. Then, 
\begin{equation} \label{notation vec property}
    \vecrz(AXB) = (B^{\top} \otimes A) \vecrz (X).
\end{equation}
For a square matrix $Q \in \mathbb{R}^{q \times q}$, we denote its trace by $\trace(Q)$. We then have
\begin{equation} \label{notation tr property}
    \trace(A^{\top}B) = (\vecrz(A))^{\top}\vecrz(B),
\end{equation}
for any $A,B \in \mathbb{R}^{p \times q}$. The set of $n \times n$ real symmetric matrices is denoted by $\mathbb{S}^n$. We denote by $\mathbb{S}^{p,q}$ the set of all symmetric matrices
$$
\Pi = \begin{bmatrix}
    \Pi_{11} & \Pi_{12}\\
    \Pi_{21} & \Pi_{22}
\end{bmatrix},
$$
where $\Pi_{11} \in \mathbb{S}^{p}$ and $\Pi_{22} \in \mathbb{S}^{q}$. Finally, we define a set of matrices
$$
\begin{aligned}
    \mathbf{\Pi}_{p,q} \!:= \!\{
		\Pi \in \mathbb{S}^{p,q} \mid \Pi_{22} \!\leq 0, \ker{\Pi_{22}}\!\subseteq \ker{\Pi_{12}}, \Pi | \Pi_{22}\! \geq 0 \}\!,
\end{aligned}
$$
where $\Pi | \Pi_{22} := \Pi_{11} - \Pi_{12}\Pi_{22}^{\dagger}\Pi_{21}$ is the generalized Schur complement of $\Pi$ with respect to $\Pi_{22}$. 

\section{Preliminaries} \label{section Preliminaries}
In this section, we present the preliminaries. Section~\ref{section System data and noise} introduces the system, data and noise. Section~\ref{section Informativity for M control} defines informativity for $M$-control and reviews related results. Section~\ref{section Motivation} discusses the motivation of this paper.

\subsection{System, data and noise} \label{section System data and noise}
Consider the discrete-time LTI system
\begin{equation} \label{eq real system}
    x(t+1) = A_sx(t)+B_su(t)+w(t),
\end{equation}
where $x(t) \in \mathbb{R}^n$ is the state, $u(t) \in \mathbb{R}^m$ is the input and $w(t) \in \mathbb{R}^n$ is the process noise. The parameter matrices $A_s \in \mathbb{R}^{n \times n}$ and $B_s \in \mathbb{R}^{n \times m}$ are unknown. Instead, we are given the input data $u(0),u(1),\dots, u(T-1)$ and the state data $x(0),x(1),\dots, x(T)$, which are collected in the matrices
$$
\begin{aligned}
	U_- &:= \begin{bmatrix}
		u(0) & u(1) & \cdots & u(T-1)
	\end{bmatrix},\\
    X &:= \begin{bmatrix} 
            x(0) & x(1) & \cdots & x(T)
	\end{bmatrix}.
\end{aligned}
$$
These data are generated by the true system $(A_s,B_s)$ influenced by unknown noise $w(0),w(1),\dots, w(T-1)$, such that
\begin{equation}\label{realdataeq DT}
    X_+ = A_{s}X_- + B_sU_- + W_-,
\end{equation}
where
$$
    \begin{aligned}
	X_+ &:= \begin{bmatrix} 
            x(1) & x(2) & \cdots & x(T)
	\end{bmatrix},\\
	X_- &:= \begin{bmatrix} 
            x(0) & x(1) & \cdots & x(T-1)
	\end{bmatrix},\\
	W_- &:= \begin{bmatrix}
			w(0) & w(1) & \cdots & w(T-1)
	\end{bmatrix}.
    \end{aligned}
$$
Although the noise matrix $W_-$ is unknown, we assume that it belongs to a given set $\mathcal{W}$, i.e., $W_- \in \mathcal{W}$. 

\begin{example}
A natural choice of $\mathcal{W}$ captures instantaneous bounds on the noise. In this case, $\mathcal{W}$ is given by
\begin{equation} \label{definition Wib}
    \mathcal{W}_{\mathrm{ib}}(\epsilon) \!:=\! \{W \!\in \mathbb{R}^{n \times T} \!\mid\! \|W_i\|_2^2 \leq \epsilon \text{ for }i = 1,2,\dots,T\},
\end{equation}
where $W_i$ denotes the $i$-th column of $W$ and $\epsilon \geq 0$ is given. This can be expressed as an Euclidean norm bound on each noise sample, i.e., 
$$
    \|w(t)\|_2^2 \leq \epsilon,\text{ for }t = 0,1,\dots,T-1.
$$
An alternative noise model is given by
\begin{equation} \label{definition Wqmi}
    \mathcal{W}_{\mathrm{qmi}}(\Psi):= \bigg\{ W \in \mathbb{R}^{n \times T} \mid \begin{bmatrix}
    I \\ W^{\top}
\end{bmatrix}^{\top} \Psi \begin{bmatrix}
    I \\ W^{\top}
\end{bmatrix} \geq 0 \bigg\},
\end{equation}
where $\Psi \in \mathbf{\Pi}_{n,T}$. This model is governed by a QMI and is widely considered in recent works.
\end{example}

A system $(A,B)$ is called \emph{compatible} with the data $\mathcal{D}:=(U_-,X)$ if
\begin{equation}\label{dataeq DT}
    X_+ = AX_- + BU_- + W_-,
\end{equation}
for some $W_- \in \mathcal{W}$. We denote the set of all systems compatible with $\mathcal{D}$ by $\Sigma_{\mathcal{D}}(\mathcal{W})$, i.e.,
\begin{equation} \label{definition Sigma}
    \Sigma_{\mathcal{D}}(\mathcal{W}): = \{(A,B)\mid \eqref{dataeq DT}\text{ holds for some }W_- \in \mathcal{W}\}.
\end{equation}
It follows from \eqref{realdataeq DT} that $(A_s,B_s) \in \Sigma_{\mathcal{D}}(\mathcal{W})$, but in general $\Sigma_{\mathcal{D}}(\mathcal{W})$ contains other systems. 

\subsection{Informativity for $\mathit{M}$-control} \label{section Informativity for M control}
In data-driven control, a central problem is the design of a controller for the true system. Since on the basis of the given data we cannot distinguish $(A_s,B_s)$ from any other system in $\Sigma_{\mathcal{D}}(\mathcal{W})$, the goal is to find a single controller that works for all systems in $\Sigma_{\mathcal{D}}(\mathcal{W})$. In the following, we introduce the notion of informativity for a class of data-driven control problems that can be formulated as QMIs in $\begin{bmatrix}
    A & B
\end{bmatrix}$.

\begin{definition} \label{definition 1}
    Let $\Theta$ be a set of control parameters, and consider a mapping $M: \Theta \rightarrow  \mathbb{S}^{n,n+m}$ with $M_{22}(\theta) \leq 0$ for all $\theta \in \Theta$. The data $\mathcal{D}$ are called \textit{informative for $M$-control with respect to the noise model $\mathcal{W}$} if there exists a $\theta \in \Theta$ such that
    \begin{equation} \label{definition 1 M theta A B}
        \begin{bmatrix} I \\ A^\top \\ B^\top \end{bmatrix}^\top M(\theta) 
        \begin{bmatrix} I \\ A^\top \\ B^\top \end{bmatrix} > 0,
    \end{equation}
    for all $(A,B) \in \Sigma_{\mathcal{D}}(\mathcal{W})$.
\end{definition}

With Definition~\ref{definition 1}, we can investigate several data-driven control problems by specifying the pair $(\Theta,M)$ accordingly. 

\begin{example}
Data-driven quadratic stabilization arises as a special case by taking $(\Theta,M) =(\Theta_{\mathrm{stab}}, M_{\mathrm{stab}})$, where
\begin{equation} \label{definition Theta stab}
    \Theta_{\mathrm{stab}} :=\{(P,K) \in \mathbb{S}^n \times \mathbb{R}^{m \times n} \mid P > 0\},
\end{equation}
and
\begin{equation} \label{definition M stab}
    M_{\mathrm{stab}}(P,K) := \begin{bmatrix}
    P & 0 & 0\\
    0 & -P & -PK^{\top}\\
    0 & -KP & -KPK^{\top}
\end{bmatrix}.
\end{equation}
In addition, data-driven $\mathcal{H}_{2}$ control and $\mathcal{H}_{\infty}$ control can also be captured by appropriate choices of $(\Theta,M)$.
\end{example}

While the general problem of $M$-control has not been studied explicitly, existing results on data-driven quadratic stabilization can be readily extended to $M$-control. For $\mathcal{W}_{\mathrm{ib}}(\epsilon)$, although no necessary and sufficient conditions are available, sufficient conditions can be derived by employing the (lossy) S-procedure \cite{boyd1994linear}, as observed in \cite[Sec.~VII]{DDCmatrixSlemma} and further investigated in \cite{bisoffi2021trade}. For $\mathcal{W}_{\mathrm{qmi}}(\Psi)$, \cite{DDCQMI} provides necessary and sufficient conditions for quadratic stabilization. For the sake of clarity, we recap both approaches, extended to the general $M$-control problem, in the following.

\begin{proposition} \label{proposition preliminaries}
Let $\epsilon \geq 0$, $\Psi \in \mathbf{\Pi}_{n,T}$, $\Theta$ be a set of control parameters and $M: \Theta \rightarrow \mathbb{S}^{n,n+m}$ be a mapping with $M_{22}(\theta) \leq 0$ for all $\theta \in \Theta$. Then the following statements hold.
\begin{enumerate} [label=(\roman*)]
\item \label{proposition statement multiple multipliers}  The data $\mathcal{D}$ are informative for $M$-control with respect to $\mathcal{W}_{\mathrm{ib}}(\epsilon)$ if there exist a $\theta \in \Theta$ and scalars $\alpha_0,\alpha_1, \dots, \alpha_{T-1} \geq 0$ and $\beta > 0$ such that 
\begin{equation} \label{proposition multiple multipliers LMI}
    \!M(\theta)-\! \sum_{t=0}^{T-1} \!\alpha_t\!\! \begin{bmatrix}
    I \!\!&\!\!\! x(t\!+\!1)\!\\
    0 \!\!&\!\!\! -x(t)\!\\
    0 \!\!&\!\!\! -u(t)\!
\end{bmatrix}\!\!\! \begin{bmatrix}
    \epsilon I \!\!&\!\!\! 0 \\
    0 \!\!&\!\!\!\! -I
\end{bmatrix}\!\!\! \begin{bmatrix}
    I \!\!&\!\!\! x(t\!+\!1)\!\\
    0 \!\!&\!\!\! -x(t)\!\\
    0 \!\!&\!\!\! -u(t)\!
\end{bmatrix}^{\!\!\!\top}\!\! \geq\!\! \begin{bmatrix}
    \beta I_{n} \!\!&\!\!\! 0\\
    0 \!\!&\!\!\! 0
\end{bmatrix}\!\!.
\end{equation}
\item \label{proposition statement a QMI set}  The data $\mathcal{D}$ are informative for $M$-control with respect to $\mathcal{W}_{\mathrm{qmi}}(\Psi)$ if and only if there exist a $\theta \in \Theta$ and scalars $\alpha \geq 0$ and $\beta > 0$ such that
\begin{equation} \label{proposition statement a QMI set LMI}
\!M(\theta) - \alpha \begin{bmatrix}
    I \!\!&\!\! X_+ \\
    0 \!\!&\!\! -X_- \\
    0 \!\!&\!\! -U_-
\end{bmatrix} \Psi \begin{bmatrix}
    I \!\!&\!\! X_+ \\
    0 \!\!&\!\! -X_- \\
    0 \!\!&\!\! -U_-
\end{bmatrix}^{\!\!\top} \!\geq\! \begin{bmatrix}
    \beta I_n \!\!&\!\! 0\\
    0 \!\!&\!\! 0
\end{bmatrix}.
\end{equation}
\end{enumerate}
\end{proposition}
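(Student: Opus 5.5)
The plan is to rewrite the set of data-compatible systems as a quadratic constraint in $Z := \begin{bmatrix} I & A & B\end{bmatrix}^\top$. Then (i) follows from a direct S-procedure bound, and (ii) from the matrix S-lemma of \cite{DDCQMI}.

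\textbf{Reduction.} For any $(A,B)$ let $W := X_+ - AX_- - BU_-$. Then
$$\begin{bmatrix} I \\ W^\top\end{bmatrix} = N^\top \begin{bmatrix} I \\ A^\top \\ B^\top\end{bmatrix}, \qquad N := \begin{bmatrix} I & X_+ \\ 0 & -X_- \\ 0 & -U_-\end{bmatrix}.$$
Hence $(A,B)\in\Sigma_{\mathcal{D}}(\mathcal{W}_{\mathrm{qmi}}(\Psi))$ if and only if $Z^\top N\Psi N^\top Z \geq 0$. Likewise, the $t$-th column gives $\begin{bmatrix} I & w(t)\end{bmatrix}^\top = N_t^\top Z$, with $N_t$ the matrix appearing in \eqref{proposition multiple multipliers LMI}. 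Since
$$N_t\begin{bmatrix}\epsilon I&0\\0&-I\end{bmatrix}N_t^\top$$
evaluated at $Z$ equals $\epsilon I - w(t)w(t)^\top$, the constraint $\|w(t)\|_2^2\le\epsilon$ is equivalent to $Z^\top N_t \,\mathrm{diag}(\epsilon I,-I)\, N_t^\top Z \geq 0$.

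\textbf{Part (i).} Suppose the LMI \eqref{proposition multiple multipliers LMI} holds for some $\theta$, $\alpha_t\ge0$ and $\beta>0$. Take $(A,B)$ compatible with $\mathcal{W}_{\mathrm{ib}}(\epsilon)$ and multiply the LMI by $Z^\top$ on the left and $Z$ on the right. This gives
$$Z^\top M(\theta) Z \;\ge\; \sum_t \alpha_t\,(\epsilon I - w(t)w(t)^\top) + \beta I \;\ge\; \beta I > 0,$$
using $Z^\top \mathrm{diag}(\beta I,0) Z = \beta I$ and the fact that each summand is positive semidefinite. This is the lossy direction only, so no further work is needed.

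\textbf{Part (ii), sufficiency.} The same congruence argument applies with a single multiplier $\alpha$ and the term $N\Psi N^\top$ in place of the sum.

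\textbf{Part (ii), necessity.} This is the main step. Fix the $\theta$ given by informativity. We need the implication "$Z^\top N\Psi N^\top Z\ge0 \Rightarrow Z^\top M(\theta) Z>0$" for all $Z$ of the form $\begin{bmatrix} I & A & B\end{bmatrix}^\top$ to produce multipliers $\alpha\ge0$, $\beta>0$. Both sides are QMIs in $\begin{bmatrix} A & B\end{bmatrix}$ with $n\times n$ blocks, which is exactly the setting of the strict matrix S-lemma in \cite{DDCQMI}. To invoke it we check the hypotheses.

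First, $N\Psi N^\top$ must have the structure of the class $\mathbf{\Pi}$. Its lower-right block is $\begin{bmatrix} X_-\\U_-\end{bmatrix}\Psi_{22}\begin{bmatrix} X_-\\U_-\end{bmatrix}^\top \le 0$. The kernel inclusion is inherited from $\ker\Psi_{22}\subseteq\ker\Psi_{12}$. The Schur complement equals $\Psi|\Psi_{22}\ge0$, by the standard computation used in \cite{DDCQMI}.

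Second, $M_{22}(\theta)\le0$ holds by assumption. This matters because the strict S-lemma of \cite{DDCQMI} needs no Slater-type condition in the strict case, given $\mathbf{\Pi}$ structure and $M_{22}\le0$.

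If that S-lemma in the form I recall requires nonemptiness of the compatible set, this is guaranteed because $(A_s,B_s)$ is compatible. With the hypotheses in place, the S-lemma yields $\alpha\ge0$ and $\beta>0$ such that \eqref{proposition statement a QMI set LMI} holds.

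The obstacle I expect is verifying that the hypotheses of \cite{DDCQMI}'s theorem hold for the transformed matrix $N\Psi N^\top$, in particular the generalized Schur complement condition, which involves pseudoinverses. If it is cumbersome, I would instead argue via a congruence with $\begin{bmatrix} I & 0\\ 0 & \begin{bmatrix}X_-\\U_-\end{bmatrix}^\top\end{bmatrix}$-type maps and the known invariance results in \cite{DDCQMI}.
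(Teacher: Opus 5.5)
Your route is the paper's. You use the congruence with $\begin{bmatrix} I & A & B\end{bmatrix}$ for (i) and for sufficiency in (ii). For necessity in (ii), you rewrite $\Sigma_{\mathcal{D}}(\mathcal{W}_{\mathrm{qmi}}(\Psi))$ as a QMI with matrix $\Pi:=N\Psi N^\top$ and apply the strict matrix S-lemma (Lemma~\ref{lemma QMI}), which needs $\Pi\in\mathbf{\Pi}_{n,n+m}$ and $M_{22}(\theta)\le 0$. Your checks of $\Pi_{22}\le 0$ and $\ker\Pi_{22}\subseteq\ker\Pi_{12}$ are correct.

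Your justification of the third condition is wrong. The generalized Schur complement $\Pi|\Pi_{22}$ is \emph{not} equal to $\Psi|\Psi_{22}$ in general. By completing the square, $\Pi|\Pi_{22}$ is the largest value, in the Loewner order, of $\begin{bmatrix} I\\ S\end{bmatrix}^\top\Pi\begin{bmatrix} I\\ S\end{bmatrix}$ over $S$. That is the $\Psi$-form evaluated only at residuals $W^\top=X_+^\top-\begin{bmatrix}X_-\\U_-\end{bmatrix}^\top S$, so you only get $\Pi|\Pi_{22}\le\Psi|\Psi_{22}$. For $\Psi=\Psi_\epsilon$ one finds $\Pi|\Pi_{22}=\epsilon T I-\Delta$, with $\Delta$ as in Section~\ref{section Discussion on the instantaneous noise bound}. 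For data not generated with noise in $\mathcal{W}_{\mathrm{qmi}}(\Psi_\epsilon)$, $\Delta$ can exceed $\epsilon T I$. In fact no argument using only properties of $\Psi$ can work. Given the other two conditions, $\Pi|\Pi_{22}\ge 0$ is equivalent to nonemptiness of the compatible set, and that depends on the data.

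The fix is the nonemptiness fact you already mention, used here to verify the Schur complement condition rather than as a possible extra hypothesis of the S-lemma. Let $S_s:=\begin{bmatrix} A_s & B_s\end{bmatrix}^\top$. The true system is compatible, and the completion of squares is valid because you have shown $\Pi_{22}\le 0$ and $\ker\Pi_{22}\subseteq\ker\Pi_{12}$. Together these give $0\le \Pi|\Pi_{22}+(S_s+\Pi_{22}^\dagger\Pi_{21})^\top\Pi_{22}(S_s+\Pi_{22}^\dagger\Pi_{21})\le\Pi|\Pi_{22}$. This is how the paper proves the analogous property for the vectorized $N$ in Lemma~\ref{lemma N in Pi}, and it is what the appendix relies on when it cites \cite[Sec.~5]{DDCQMI}. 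With this replacement your proof is complete.
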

\vspace{1em}

The proof of Proposition~\ref{proposition preliminaries} is presented in the appendix.

\subsection{Motivation} \label{section Motivation}
In Proposition \ref{proposition preliminaries}, statement \ref{proposition statement multiple multipliers} provides sufficient conditions for informativity for $M$-control with respect to $\mathcal{W}_{\mathrm{ib}}(\epsilon)$, but the resulting LMI \eqref{proposition multiple multipliers LMI} relies on $T$ multipliers, namely $\alpha_0,\alpha_1,\dots,\alpha_{T-1}$. This is a potential drawback since the number of decision variables depends on the number of data samples. Alternatively, informativity with respect to $\mathcal{W}_{\mathrm{ib}}(\epsilon)$ can be guaranteed by verifying the LMI \eqref{proposition statement a QMI set LMI} in statement \ref{proposition statement a QMI set} with $\Psi = \Psi_{\epsilon}$, where
\begin{equation} \label{definition Psi epsilon}
\Psi_{\epsilon} := \begin{bmatrix}
    \epsilon T I & 0 \\
    0 & -I
\end{bmatrix}.
\end{equation}
Indeed, any $W \in \mathcal{W}_{\mathrm{ib}}(\epsilon)$ satisfies 
$$
WW^{\top} = \sum_{i=1}^TW_iW_i^{\top}  \leq \epsilon T I,
$$
which implies $\mathcal{W}_{\mathrm{ib}}(\epsilon) \subseteq \mathcal{W}_{\mathrm{qmi}}(\Psi_{\epsilon})$. Therefore, the necessary and sufficient conditions for the informativity with respect to $\mathcal{W}_{\mathrm{qmi}}(\Psi_{\epsilon})$ are sufficient for the informativity with respect to $\mathcal{W}_{\mathrm{ib}}(\epsilon)$. Since \eqref{proposition statement a QMI set LMI} involves only one multiplier $\alpha$, the number of decision variables does not depend on the number of data samples, meaning that \eqref{proposition statement a QMI set LMI} is favorable from a computational point of view. However, it can be verified that if $\theta$, $\alpha$ and $\beta$ satisfy \eqref{proposition statement a QMI set LMI} with $\Psi = \Psi_{\epsilon}$, then \eqref{proposition multiple multipliers LMI} holds with the same $\theta$ and $\beta$ and with $\alpha_0=\alpha_1=\dots=\alpha_{T-1}=\alpha$. Hence, the conditions based on \eqref{proposition statement a QMI set LMI} can be more conservative than those based on \eqref{proposition multiple multipliers LMI}. 

To conclude, compared to the approach relying on $T$ multipliers, the $\mathcal{W}_{\mathrm{qmi}}(\Psi_{\epsilon})$-based formulation offers computational advantages, albeit at the cost of being a conservative overapproximation of $\mathcal{W}_{\mathrm{ib}}(\epsilon)$. In this paper, we will introduce a new noise model and provide necessary and sufficient conditions for data-driven control with respect to this noise model. In the context of $\mathcal{W}_{\mathrm{ib}}(\epsilon)$, the results of this paper are a viable alternative to the approaches based on $T$ multipliers and $\mathcal{W}_{\mathrm{qmi}}(\Psi_{\epsilon})$. Indeed, as we will show (Lemma~\ref{lemma W set inclusion}), the new noise model provides a less conservative overapproximation of $\mathcal{W}_{\mathrm{ib}}(\epsilon)$ than $\mathcal{W}_{\mathrm{qmi}}(\Psi_{\epsilon})$, while the novel resulting method preserves the same computational benefits as the one based on $\mathcal{W}_{\mathrm{qmi}}(\Psi_{\epsilon})$.

\section{A new noise model} \label{section A new noise model}
We define the noise model
\begin{equation} \label{noise bound vector}
    \mathcal{W}_{\mathrm{F}}(\Phi)\!:=\! \bigg\{ W \!\in \mathbb{R}^{n \times T} \!\mid \!\begin{bmatrix}
    1 \\ \vecrz (W)
\end{bmatrix}^{\!\!\top}\! \Phi \begin{bmatrix}
    1 \\ \vecrz (W)
\end{bmatrix} \!\!\geq\! 0 \bigg\}.
\end{equation}
where $\Phi \in \mathbf{\Pi}_{1,nT}$ is given. By \cite[Thm.~3.2]{DDCQMI}, the set $\mathcal{W}_{\mathrm{F}}(\Phi)$ is nonempty and convex. 

This model is governed by a quadratic inequality in the vectorized noise matrix and can be interpreted as a generalization of a Frobenius norm bound on $W$. Indeed, as a special case with $\Phi_{11} \geq 0$, $\Phi_{12} = \Phi_{21}^{\top} = 0$ and $\Phi_{22}=-I$, we have that $W \in \mathcal{W}_{\mathrm{F}}(\Phi)$ if and only if
$$
\|W\|_{\mathrm{F}}^2 = \|\vecrz (W)\|_2^2 \leq \Phi_{11}.
$$ 

The following lemma shows that, with a suitable choice of $\Phi$, the new noise model provides a less conservative overapproximation of $\mathcal{W}_{\mathrm{ib}}(\epsilon)$ than $\mathcal{W}_{\mathrm{qmi}}(\Psi_{\epsilon})$.

\begin{lemma} \label{lemma W set inclusion}
    Let $\epsilon \geq 0$, and let $\Psi_{\epsilon} \in \mathbb{S}^{n, T}$ be defined as in \eqref{definition Psi epsilon}. Define $\Phi_{\epsilon} \in \mathbb{S}^{1, nT}$ as
    \begin{equation} \label{definition Phi ib}
    \Phi_{\epsilon} := \begin{bmatrix}
        \epsilon T & 0 \\
        0 & -I
    \end{bmatrix}.
    \end{equation}
    Then,
    \begin{equation} \label{W inclusion}
        \mathcal{W}_{\mathrm{ib}}(\epsilon) \subseteq \mathcal{W}_{\mathrm{F}}(\Phi_{\epsilon}) \subseteq \mathcal{W}_{\mathrm{qmi}}(\Psi_{\epsilon}).
    \end{equation}
    Moreover, 
    $\mathcal{W}_{\mathrm{F}}(\Phi_{\epsilon}) = \mathcal{W}_{\mathrm{qmi}}(\Psi_{\epsilon})$ if and only if at least one of the following holds: $\epsilon = 0$, $n = 1$ or $T = 1$.
\end{lemma}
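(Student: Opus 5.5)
The plan is to prove the inclusions by direct norm comparisons, and then the equality characterization by checking the degenerate cases and exhibiting a separating matrix in all others.

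\emph{Inclusions.} For $W \in \mathcal{W}_{\mathrm{ib}}(\epsilon)$ we have $\|W\|_{\mathrm{F}}^2=\sum_{i=1}^T\|W_i\|_2^2\le \epsilon T$. This is exactly the defining inequality of $\mathcal{W}_{\mathrm{F}}(\Phi_\epsilon)$, because the quadratic form equals $\epsilon T-\|\vecrz(W)\|_2^2$. For the second inclusion, note that $W\in\mathcal{W}_{\mathrm{qmi}}(\Psi_\epsilon)$ means $\epsilon T I - WW^\top\ge 0$, i.e.\ $\|W\|_2^2\le \epsilon T$. Since $\|W\|_2\le\|W\|_{\mathrm{F}}$, the bound $\|W\|_{\mathrm{F}}^2\le\epsilon T$ implies $\|W\|_2^2\le \epsilon T$, which gives the inclusion.

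\emph{Equality: sufficiency.} We check each case.
\begin{itemize}
\item If $\epsilon=0$, both sets equal $\{0\}$: on one side $\|W\|_{\mathrm{F}}=0$, and on the other $WW^\top\le 0$ forces $W=0$.
\item If $n=1$, then $W$ is a row vector and $\|W\|_2=\|W\|_{\mathrm{F}}$.
\item If $T=1$, then $W$ is a column vector and the same identity holds.
\end{itemize}
In each case the two sets coincide.

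\emph{Equality: necessity.} Suppose $\epsilon>0$, $n\ge 2$ and $T\ge 2$. Take $W$ whose only nonzero entries are $W_{11}=W_{22}=\sqrt{\epsilon T}$. Its singular values are both $\sqrt{\epsilon T}$, so $\|W\|_2^2=\epsilon T$ and $WW^\top\le \epsilon T I$, placing $W\in\mathcal{W}_{\mathrm{qmi}}(\Psi_\epsilon)$. On the other hand $\|W\|_{\mathrm{F}}^2=2\epsilon T>\epsilon T$, so $W\notin\mathcal{W}_{\mathrm{F}}(\Phi_\epsilon)$. Hence the inclusion is strict.

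No step is a real obstacle. The only care needed is to translate the QMI into a spectral-norm bound; this uses $\Psi_\epsilon$ with its $(2,2)$ block $-I_T$, so the QMI reads $\epsilon T I - WW^\top \ge 0$.
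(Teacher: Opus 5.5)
Your proof is correct and follows the same route as the paper: both inclusions come from recognizing $\mathcal{W}_{\mathrm{F}}(\Phi_\epsilon)$ and $\mathcal{W}_{\mathrm{qmi}}(\Psi_\epsilon)$ as the Frobenius- and spectral-norm balls of radius $\sqrt{\epsilon T}$, and the equality characterization rests on $\|W\|_2=\|W\|_{\mathrm{F}}$ holding exactly for matrices of rank at most one. The only difference is in the necessity direction. The paper rescales an arbitrary rank-two matrix to the spectral boundary and cites that $\|W\|_2<\|W\|_{\mathrm{F}}$ for rank at least two, whereas your explicit witness with $W_{11}=W_{22}=\sqrt{\epsilon T}$ does the same job and makes the argument slightly more self-contained.
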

\begin{proof}
    We first prove the inclusion \eqref{W inclusion}. It holds that
    $$
    \mathcal{W}_{\mathrm{F}}(\Phi_{\epsilon}) = \{ W \in \mathbb{R}^{n \times T} \mid \|W\|_{\mathrm{F}}^2 \leq \epsilon T \}.
    $$
    Since $\|W\|_{\mathrm{F}}^2 = \sum_{i=1}^{T}\|W_i\|_2^2 \leq \epsilon T$ holds for all $W \in \mathcal{W}_{\mathrm{ib}}(\epsilon)$, we have $\mathcal{W}_{\mathrm{ib}}(\epsilon) \subseteq \mathcal{W}_{\mathrm{F}}(\Phi_{\epsilon})$. Note that $\mathcal{W}_{\mathrm{qmi}}(\Psi_{\epsilon})$ can be expressed in terms of a spectral norm bound as follows:
    $$
    \mathcal{W}_{\mathrm{qmi}}(\Psi_{\epsilon}) = \{ W \in \mathbb{R}^{n \times T} \mid \|W\|_{2}^2 \leq \epsilon T \}.
    $$
    Since $\|W\|_{2} \leq  \|W\|_{\mathrm{F}}$ for any matrix $W$, it follows that $\mathcal{W}_{\mathrm{F}}(\Phi_{\epsilon}) \subseteq \mathcal{W}_{\mathrm{qmi}}(\Psi_{\epsilon})$, which proves \eqref{W inclusion}. 

    We now prove the equivalence statement. The ``if" part follows directly from the fact that $\|W\|_2 = \|W\|_{\mathrm{F}}$ for any matrix $W$ with rank at most one. For the ``only if" part, assume that $\mathcal{W}_{\mathrm{F}}(\Phi_{\epsilon}) = \mathcal{W}_{\mathrm{qmi}}(\Psi_{\epsilon})$. Suppose, by contradiction, that $\epsilon > 0$, $n \geq 2$ and $T \geq 2$. There exists a $W \in \mathcal{W}_{\mathrm{qmi}}(\Psi_{\epsilon})$ with rank at least two. This implies $\|W\|_2 \neq 0$. Define $\bar{W} := (\epsilon T)^{\frac{1}{2}}\|W\|_2^{-1} W $. Then $\|\bar{W}\|_2^2 = \epsilon T$ and thus $\bar{W} \in  \mathcal{W}_{\mathrm{qmi}}(\Psi_{\epsilon})$. Note that the rank of $\bar{W}$ equals the rank of $W$. By \cite[Prop.~9.2.3]{bernstein2009matrix}, we have $\|\bar{W}\|_{2} <  \|\bar{W}\|_{\mathrm{F}}$ and therefore $\|\bar{W}\|_{\mathrm{F}}^2 > \epsilon T$. We conclude that $\bar{W} \notin \mathcal{W}_{\mathrm{F}}(\Phi_{\epsilon})$, which contradicts the assumption that $\mathcal{W}_{\mathrm{F}}(\Phi_{\epsilon}) = \mathcal{W}_{\mathrm{qmi}}(\Psi_{\epsilon})$. Hence, at least one of the following holds: $\epsilon = 0$, $n = 1$ or $T = 1$. This proves the lemma.    
\end{proof}

Lemma \ref{lemma W set inclusion} illustrates how $\mathcal{W}_{\mathrm{F}}(\Phi)$ relates to the well-studied noise models $\mathcal{W}_{\mathrm{ib}}(\epsilon)$ and $\mathcal{W}_{\mathrm{qmi}}(\Psi)$. This shows that $\mathcal{W}_{\mathrm{F}}(\Phi_{\epsilon})$ is a less conservative overapproximation of $\mathcal{W}_{\mathrm{ib}}(\epsilon)$ than $\mathcal{W}_{\mathrm{qmi}}(\Psi_{\epsilon})$, and in general $\mathcal{W}_{\mathrm{F}}(\Phi_{\epsilon}) \neq \mathcal{W}_{\mathrm{qmi}}(\Psi_{\epsilon})$. 

The primary goal of the paper is to study informativity for $M$-control with respect to the noise model $\mathcal{W}_{\mathrm{F}}(\Phi)$. In particular, we study the following problem.

\begin{problem} \label{problem 1}
    Find necessary and sufficient conditions under which the data $\mathcal{D}$ are informative for $M$-control with respect to $\mathcal{W}_{\mathrm{F}}(\Phi)$.
\end{problem}

Beyond control design, we will also extend the framework to data-driven analysis problems, including stabilizability and dissipativity. 

Finally, by virtue of Lemma~\ref{lemma W set inclusion}, our new conditions for informativity with respect to the noise model $\mathcal{W}_{\mathrm{F}}(\Phi)$ can also be used to obtain novel conditions for informativity with respect to the noise model $\mathcal{W}_{\mathrm{ib}}(\epsilon)$. In Section~\ref{section Discussion on the instantaneous noise bound} and the illustrative examples in Section~\ref{section Illustrative examples}, we will compare our approach to existing methods for this noise model.

In the remainder of the paper, when referring to the noise model $\mathcal{W}_{\mathrm{F}}(\Phi)$ without specifying a particular $\Phi \in \mathbf{\Pi}_{1,nT}$, we simply write $\mathcal{W}_{\mathrm{F}}$.

\section{The set of data-compatible systems} \label{section The set of data compatible systems}
In this section, we will focus on the system set $\Sigma_{\mathcal{D}}(\mathcal{W}_{\mathrm{F}})$. From the definition \eqref{definition Sigma},
$$
\Sigma_{\mathcal{D}}(\mathcal{W}_{\mathrm{F}}) = \{(A,B)\mid \eqref{dataeq DT}\text{ holds for some }W_- \in \mathcal{W}_{\mathrm{F}}\}.
$$
By vectorizing \eqref{dataeq DT}, 
$$
\vecrz \left({X}_+\right) = \left( \begin{bmatrix}
    X_- \\ U_-
\end{bmatrix}^{\top} \otimes I \right)\vecrz \left(\begin{bmatrix}
    A & B
\end{bmatrix}\right) + \vecrz \left(W_-\right).
$$
Define
\begin{equation} \label{definition N}
\!\! N\! := \!
    \begin{bmatrix}
     1 & 0 \\
    \!\vecrz(X_+)\! &\!\! -\begin{bmatrix}
    X_-\! \\ U_-\!
\end{bmatrix}^{\!\!\top}\!\! \otimes I
\end{bmatrix}^{\!\!\top}\!\Phi\!
\begin{bmatrix}
     1 & 0 \\
    \!\vecrz(X_+)\! &\!\! -\begin{bmatrix}
    X_-\! \\ U_-\!
\end{bmatrix}^{\!\!\top}\!\! \otimes I
\end{bmatrix}\!\!.
\end{equation}
Then, $(A,B) \in \Sigma_{\mathcal{D}}(\mathcal{W}_{\mathrm{F}})$ if and only if $\vecrz(\begin{bmatrix}
        A & B
    \end{bmatrix})$ satisfies
\begin{equation} \label{vec A B}
    \begin{bmatrix}
    1 \\ \vecrz(\begin{bmatrix}
        A & B
    \end{bmatrix})
\end{bmatrix}^{\top} N \begin{bmatrix}
    1 \\ \vecrz(\begin{bmatrix}
        A & B
    \end{bmatrix})
\end{bmatrix} \geq 0.
\end{equation}

In the following lemma, we show that the matrix $N$ belongs to the set $\mathbf{\Pi}_{1,n(n+m)}$.
\begin{lemma} \label{lemma N in Pi}
    Let $N \in \mathbb{S}^{1, n(n+m)}$ be defined as in \eqref{definition N}. Then $N \in \mathbf{\Pi}_{1,n(n+m)}$.
\end{lemma}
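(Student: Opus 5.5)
The plan is to view $N$ as a congruence transformation of $\Phi$ by a matrix of the special lower block-triangular form
$$
S := \begin{bmatrix} 1 & 0 \\ s & F \end{bmatrix}, \quad s := \vecrz(X_+), \quad F := -\begin{bmatrix} X_- \\ U_- \end{bmatrix}^{\top}\!\otimes I,
$$
so that $N = S^\top \Phi S$. We then verify the three defining conditions of $\mathbf{\Pi}_{1,n(n+m)}$ directly. If the paper's reference \cite{DDCQMI} already contains a general lemma stating that $\mathbf{\Pi}_{p,q}$ is invariant under such congruences, we would simply invoke it. Otherwise, we argue as follows.

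First, we expand the blocks. Writing $\Phi$ with blocks $\Phi_{11}\in\mathbb{R}$, $\Phi_{12}$, $\Phi_{22}$, we obtain
$$
N_{22} = F^\top \Phi_{22} F, \quad N_{12} = (\Phi_{12} + s^\top \Phi_{22}) F, \quad N_{11} = \begin{bmatrix}1\\ s\end{bmatrix}^\top \Phi \begin{bmatrix}1\\ s\end{bmatrix}.
$$
Since $\Phi_{22}\le 0$, it follows immediately that $N_{22}\le 0$.

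For the kernel inclusion, take $v\in\ker N_{22}$. Then $v^\top F^\top\Phi_{22}Fv=0$, and since $-\Phi_{22}\ge0$ this gives $\Phi_{22}Fv=0$, so $Fv\in\ker\Phi_{22}\subseteq\ker\Phi_{12}$. Hence $N_{12}v=\Phi_{12}Fv+s^\top\Phi_{22}Fv=0$.

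The main step is the Schur complement $N|N_{22}\ge0$. We use the variational characterization: for $\Pi$ with $\Pi_{22}\le0$ and $\ker\Pi_{22}\subseteq\ker\Pi_{12}$, one has
$$
\Pi|\Pi_{22} = \max_{y}\begin{bmatrix}1\\y\end{bmatrix}^\top\Pi\begin{bmatrix}1\\y\end{bmatrix}
$$
in the scalar case $p=1$. This holds because the quadratic function of $y$ is concave and, thanks to the kernel condition, attains its maximum at $y=-\Pi_{22}^\dagger\Pi_{21}$. The inequality is $\ge$ by completing the square. Applying this to $N$ with $y=v$ gives
$$
N|N_{22} = \max_{v}\begin{bmatrix}1\\ s+Fv\end{bmatrix}^\top \Phi\begin{bmatrix}1\\ s+Fv\end{bmatrix}.
$$
This is a maximum over the affine subspace $s+\operatorname{im}F$, so it is at most $\Phi|\Phi_{22}$, which is the wrong direction for our purposes. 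This is therefore the expected obstacle.

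To resolve it, note that $N|N_{22}\ge0$ holds exactly when the set in \eqref{vec A B} is nonempty, i.e., when some $(A,B)$ is data-compatible. Nonemptiness need not follow from $\Phi\in\mathbf{\Pi}_{1,nT}$ alone. If $\Phi_{22}<0$ is assumed, or if, more reasonably, we use the standing fact that the true system satisfies $(A_s,B_s)\in\Sigma_{\mathcal{D}}(\mathcal{W}_{\mathrm{F}})$ because $W_-\in\mathcal{W}_{\mathrm{F}}$, then $v=\vecrz([A_s\ B_s])$ gives the value $[1;\vecrz(W_-)]^\top\Phi[1;\vecrz(W_-)]\ge0$. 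Hence the maximum is nonnegative, and therefore $N|N_{22}\ge0$. This use of the data-generating assumption \eqref{realdataeq DT} is the key ingredient, and the care needed is in justifying the variational formula with pseudoinverses, which we do by completing the square using $\Pi_{12}=\Pi_{12}\Pi_{22}^\dagger\Pi_{22}$ (itself a consequence of the kernel inclusion).
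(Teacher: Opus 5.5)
Your proof is correct and follows the same route as the paper. You get $N_{22}=F^\top\Phi_{22}F\le 0$ by congruence and $\ker N_{22}\subseteq\ker N_{12}$ from $\Phi_{22}Fv=0$ together with $\ker\Phi_{22}\subseteq\ker\Phi_{12}$; you then get $N|N_{22}\ge 0$ by evaluating the completed-square identity at $\vecrz([A_s\ B_s])$, where the form reduces to the nonnegative noise inequality for $W_-$, exactly as the paper does. One aside is wrong: $\Phi_{22}<0$ alone does not give nonemptiness, since $s+\operatorname{im}F$ is in general a proper affine subspace that need not meet $\mathcal{W}_{\mathrm{F}}(\Phi)$, but your argument never uses this.
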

\begin{proof}
    Since $\Phi \in \mathbf{\Pi}_{1,nT}$, we have that $\Phi_{22} \leq 0$ and hence
$$
N_{22} = \left(\begin{bmatrix}
    X_- \\ U_-
\end{bmatrix} \otimes I \right) \Phi_{22}\left(\begin{bmatrix}
    X_- \\ U_-
\end{bmatrix}^{\top} \otimes I\right) \leq 0.
$$
Moreover, 
$$
\ker N_{22} = \ker \Phi_{22}\left(\begin{bmatrix}
    X_- \\ U_-
\end{bmatrix}^{\top} \otimes I\right).
$$
Since 
$$
N_{12} = \begin{bmatrix}
     1 \\
    \vecrz(X_+)
\end{bmatrix}^{\top}\begin{bmatrix}
    \Phi_{12} \\ \Phi_{22}
\end{bmatrix}
\left(-\begin{bmatrix}
    X_- \\ U_-
\end{bmatrix}^{\top}\!\! \otimes I \right),
$$
it follows from $\ker \Phi_{22} \subseteq \ker \Phi_{12}$ that $\ker N_{22} \subseteq \ker N_{12}$. Now it follows from $(A_s,B_s) \in \Sigma_{\mathcal{D}}(\mathcal{W}_{\mathrm{F}})$ that
$$
\begin{aligned}
N|N_{22} + (\vecrz(\begin{bmatrix}
        A_s & B_s
    \end{bmatrix}) + N_{22}^{\dagger}N_{21})^{\top}N_{22}&\\
    (\vecrz(\begin{bmatrix}
        A_s & B_s
    \end{bmatrix}) +& N_{22}^{\dagger}N_{21}) \geq 0,
\end{aligned}
$$
and thus $N|N_{22} \geq 0$. We conclude that $N \in \mathbf{\Pi}_{1,n(n+m)}$. 
\end{proof}

With Lemma \ref{lemma N in Pi}, we have shown that the system set $\Sigma_{\mathcal{D}}(\mathcal{W}_{\mathrm{F}})$ can be characterized by a quadratic inequality in $\vecrz(\begin{bmatrix}
    A & B
\end{bmatrix})$ with a structured matrix $N \in \mathbf{\Pi}_{1,n(n+m)}$. Therefore, Problem~\ref{problem 1} amounts to determining conditions under which there exists a $\theta \in \Theta$ such that \eqref{definition 1 M theta A B} holds for all $\vecrz(\begin{bmatrix}
    A & B
\end{bmatrix})$ satisfying \eqref{vec A B}. In other words, checking informativity requires us to check whether a QMI is implied by a quadratic inequality in vectorized variables. This motivates the following methodology section.

\section{Methodology} \label{section Methodology}
In what follows, we present a new type of S-lemma, providing conditions under which a QMI is implied by a quadratic inequality in vectorized variables. We first introduce some notation. For $\Pi \in \mathbb{S}^{p,q}$, we define
$$
\mathcal{Z}_{q}(\Pi):= \bigg\{ Z \in \mathbb{R}^{q \times p} \mid \begin{bmatrix}
    I \\ Z
\end{bmatrix}^{\top} \Pi \begin{bmatrix}
    I \\ Z
\end{bmatrix} \geq 0 \bigg\}.
$$
The counterpart corresponding to a strict QMI is denoted by $\mathcal{Z}_{q}^+(\Pi)$. 
By combining Theorem~4.10 and Corollary~4.13 in \cite{DDCQMI}, we obtain the following lemma.
\begin{lemma} \label{lemma QMI}
    Let $M \in \mathbb{S}^{p, q}$ and $N \in \mathbf{\Pi}_{p,q}$. Assume that either $M_{22} \leq 0$ or $N_{22} < 0$. Then, 
    $$
    Z \in \mathcal{Z}_{q}(N) \Rightarrow Z \in \mathcal{Z}_{q}^+(M),
    $$
    if and only if there exist scalars $\alpha \geq 0$ and $\beta > 0$ such that
    $$
    M - \alpha N \geq \begin{bmatrix}
        \beta I & 0\\
        0 & 0
    \end{bmatrix}.
    $$
\end{lemma}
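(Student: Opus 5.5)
The statement is a matrix S-lemma for strict conclusions under a nonstrict QMI hypothesis, obtained by combining Theorem~4.10 and Corollary~4.13 of \cite{DDCQMI}. I would therefore prove the two directions separately and use those results only where the necessity argument needs them.

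\emph{Sufficiency} is a direct computation. Suppose $M-\alpha N\geq \operatorname{diag}(\beta I,0)$ with $\alpha\geq 0$ and $\beta>0$. For any $Z\in\mathcal{Z}_q(N)$, multiply this inequality from the left by $\begin{bmatrix} I\\ Z\end{bmatrix}^\top$ and from the right by $\begin{bmatrix} I\\ Z\end{bmatrix}$. This gives
$$\begin{bmatrix} I\\ Z\end{bmatrix}^\top M\begin{bmatrix} I\\ Z\end{bmatrix}\geq \alpha\begin{bmatrix} I\\ Z\end{bmatrix}^\top N\begin{bmatrix} I\\ Z\end{bmatrix}+\beta I\geq \beta I>0,$$
so $Z\in\mathcal{Z}_q^+(M)$.

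\emph{Necessity} is where the cited theory enters, and I would organize it in three steps.

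First, I would use $N\in\mathbf{\Pi}_{p,q}$ to parametrize $\mathcal{Z}_q(N)$. Every element has the form $Z=-N_{22}^\dagger N_{21}+(N|N_{22})^{1/2}S(-N_{22})^{\dagger 1/2}$-type expressions plus a free kernel component, with $S$ ranging over contractions. Hence $\mathcal{Z}_q(N)$ is nonempty and contains the "center" $Z_0=-N_{22}^\dagger N_{21}$.

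Second, I would split into cases according to whether $N$ satisfies the generalized Slater condition, i.e.\ whether some $\bar Z$ gives $\begin{bmatrix} I\\ \bar Z\end{bmatrix}^\top N\begin{bmatrix} I\\ \bar Z\end{bmatrix}>0$, which is equivalent to $N|N_{22}>0$.
\begin{itemize}
\item If Slater holds, the strict matrix S-lemma (Theorem~4.10 of \cite{DDCQMI}) yields $\alpha\geq0$ and $\beta>0$ with the desired inequality. The inclusion of a nonstrict set in a strict one is handled via compactness. When $N_{22}<0$, the set $\mathcal{Z}_q(N)$ is compact, so the strict inequality holds uniformly with some margin $\beta$. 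When $M_{22}\leq 0$, the function $Z\mapsto\begin{bmatrix} I\\ Z\end{bmatrix}^\top M\begin{bmatrix} I\\ Z\end{bmatrix}$ is concave in $Z$, so the infimum over unbounded directions in $\ker N_{22}$ is controlled.
\item If Slater fails, $N|N_{22}$ is singular. Here I would invoke Corollary~4.13 of \cite{DDCQMI}, which treats the degenerate case by restricting to the affine set where the QMI holds, and then lift the multiplier back to the full matrix.
\end{itemize}

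Third, I would establish the margin $\beta I$ in the upper-left block, rather than mere positivity. One first shows $M-\alpha N\geq 0$ together with strict positivity of the quadratic form on the graph. Perturbing $\alpha$ then upgrades this to the $\beta$ form, using $M_{22}\leq0$ or $N_{22}<0$ to absorb cross terms.

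The main obstacle is this third step, together with the unbounded case $M_{22}\leq 0$ with $N_{22}$ singular. There, strictness must survive along kernel directions. My plan is to show that $\ker N_{22}\subseteq\ker N_{12}$ forces the $M$-form to be nonincreasing along those directions, and hence its infimum is attained on a compact slice.
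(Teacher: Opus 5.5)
Your sufficiency argument is fine. For necessity, the paper gives no proof beyond quoting \cite[Thm.~4.10, Cor.~4.13]{DDCQMI}, and since the lemma carries no Slater assumption, those results already cover the degenerate case. The trouble is with the parts you argue yourself: they leave necessity unproved exactly where you say the difficulty lies.

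\emph{The non-Slater case.} Your only concrete mechanism is a uniform margin $\beta$ on $\mathcal{Z}_q(N)$ followed by an S-lemma. That route needs the nonstrict S-lemma \cite[Thm.~4.7]{DDCQMI}, which requires $N|N_{22}>0$. For singular $N|N_{22}$ you only paraphrase Corollary~4.13 as ``restrict to the affine set and lift the multiplier back''. But $\mathcal{Z}_q(N)$ is then in general a degenerate ellipsoid plus kernel directions, not an affine set. Finding one finite $\alpha$ that penalizes every direction leaving it is precisely what has to be proved.

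\emph{Step~3 is circular.} From $M-\alpha N\geq 0$ and strict positivity on the graph, perturbing $\alpha$ reaches the $\beta$-form only if some $\alpha'$ already satisfies it; convexity of the admissible multipliers then lets you move toward $\alpha'$. The alternative is that $\alpha$ is the only admissible multiplier and $\ker(M-\alpha N)$ contains a vector with nonzero upper block. Excluding that alternative is the lemma itself. In the Slater case you also conflate two routes: compactness supplies the margin, and then the \emph{nonstrict} Theorem~4.7 should be applied to $M-\operatorname{diag}(\beta I,0)$, not the strict Theorem~4.10.

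\emph{The unbounded directions.} Your argument here is misstated. The inclusion $\ker N_{22}\subseteq\ker N_{12}$ is a property of $N$ alone and says nothing about $M$. What it gives is that $Z+tD\in\mathcal{Z}_q(N)$ for all $t\in\mathbb{R}$ whenever $Z\in\mathcal{Z}_q(N)$ and $\operatorname{im}D\subseteq\ker N_{22}$. When $M_{22}\leq 0$, strict positivity of the concave $M$-form along these whole lines forces $\ker N_{22}\subseteq\ker M_{22}\cap\ker M_{12}$; to see this, take $D=ky^\top$ with $k\in\ker N_{22}$. So the form is \emph{constant} along these lines. A ``nonincreasing'' form would send the infimum to infinity rather than onto a compact slice.

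\emph{A repair with no case split.} With these corrections, the gap closes as follows. Put $N_\epsilon:=N+\operatorname{diag}(\epsilon I,0)$. Then $N_\epsilon\in\mathbf{\Pi}_{p,q}$, $N_\epsilon|N_{22}=N|N_{22}+\epsilon I>0$, and $\mathcal{Z}_q(N_\epsilon)\supseteq\mathcal{Z}_q(N)$. Use the parametrization; its middle term should read $(-N_{22})^{\dagger 1/2}S(N|N_{22})^{1/2}$ with $S\in\mathbb{R}^{q\times p}$, since your version has mismatched dimensions. Every element of $\mathcal{Z}_q(N_\epsilon)$ differs by a kernel component from a point within $O(\|(N|N_{22}+\epsilon I)^{1/2}-(N|N_{22})^{1/2}\|)$ of the compact set $\{-N_{22}^\dagger N_{21}+(-N_{22})^{\dagger 1/2}S(N|N_{22})^{1/2} : S^\top S\leq I\}\subseteq\mathcal{Z}_q(N)$. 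By the previous point, the $M$-form does not depend on that kernel component. If $N_{22}<0$ there is no kernel component, and no assumption on $M_{22}$ is needed. Hence, for small $\epsilon$, the smallest eigenvalue of the $M$-form is at least some $\beta>0$ on all of $\mathcal{Z}_q(N_\epsilon)$. Applying \cite[Thm.~4.7]{DDCQMI} to $M-\operatorname{diag}(\beta I,0)$ and $N_\epsilon$ gives $\alpha\geq 0$ with $M-\alpha N\geq\operatorname{diag}((\beta+\alpha\epsilon)I,0)\geq\operatorname{diag}(\beta I,0)$.
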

\vspace{1em}
Building on this, we have the following proposition.
\begin{proposition} \label{proposition benchmark}
    Let $M \in \mathbb{S}^{p, q}$ and $N \in \mathbf{\Pi}_{1,qp}$. Assume that either $M_{22} \leq 0$ or $N_{22} < 0$. Then, 
    \begin{equation} \label{implication N to M}
        Z \in \mathbb{R}^{q \times p} \text{ and }\vecrz (Z^{\top}) \in \mathcal{Z}_{qp}(N) \Rightarrow Z \in \mathcal{Z}_{q}^+(M),
    \end{equation}
    if and only if there exist functions $\alpha,\beta: \mathbb{R}^p \rightarrow \mathbb{R}$ with $\alpha(x) \geq 0$ and $\beta(x)>0$ for all $x \in \mathbb{R}^p \setminus \{0\}$ such that
    \begin{equation} \label{proposition benchmark QMI}
    \begin{bmatrix}
        x^{\top}M_{11}x & x^{\top} M_{12} \otimes x^{\top}\\
        M_{21}x \otimes x & M_{22}\otimes xx^{\top}
    \end{bmatrix} - \alpha(x)
    N  \geq \begin{bmatrix}
        \beta(x) & 0 \\
        0 & 0
    \end{bmatrix},
    \end{equation}
    for all $x \in \mathbb{R}^p \setminus \{0\}$. 
\end{proposition}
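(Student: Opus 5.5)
The plan is to reduce the matrix implication \eqref{implication N to M} to a family of \emph{scalar} implications, one for each direction $x \in \mathbb{R}^p \setminus \{0\}$, and then to apply Lemma~\ref{lemma QMI} to each of them with $p = 1$. The basic observation is that $Z \in \mathcal{Z}_q^+(M)$ holds if and only if
$$
\begin{bmatrix} x \\ Zx \end{bmatrix}^{\top} M \begin{bmatrix} x \\ Zx \end{bmatrix} > 0 \quad \text{for all } x \in \mathbb{R}^p\setminus\{0\}.
$$
For a fixed $x$, this is a scalar quadratic inequality in $Z$, and I will rewrite it in terms of $z := \vecrz(Z^\top)$.

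First, by \eqref{notation vec property} we have $Zx = \vecrz(x^\top Z^\top) = (I_q \otimes x^\top) z$. Substituting this gives
$$
x^\top M_{11} x + 2\, x^\top M_{12} (I_q \otimes x^\top) z + z^\top (I_q \otimes x) M_{22} (I_q \otimes x^\top) z .
$$
Next I simplify the two Kronecker expressions using the mixed-product rule. Since $x^\top M_{12}$ is $1\times q$, we get $x^\top M_{12}(I_q\otimes x^\top) = (x^\top M_{12})\otimes x^\top$. Similarly, $(I_q\otimes x) M_{22} (I_q \otimes x^\top) = M_{22}\otimes xx^\top$. Writing $M_x$ for the first matrix in \eqref{proposition benchmark QMI}, the quadratic form above equals
$$
\begin{bmatrix} 1 \\ z\end{bmatrix}^\top M_x \begin{bmatrix} 1 \\ z\end{bmatrix}.
$$
Here $M_x \in \mathbb{S}^{1,qp}$, and its lower-right block is $M_{22}\otimes xx^\top$. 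This bookkeeping step is where I expect the only real care is needed: the ordering of the Kronecker factors must match the convention $\vecrz(Z^\top)$ used in $N$.

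With this identity, \eqref{implication N to M} is equivalent to the following statement: for every $x\neq 0$,
$$
z \in \mathcal{Z}_{qp}(N) \;\Rightarrow\; z \in \mathcal{Z}_{qp}^+(M_x),
$$
where both sets consist of $qp \times 1$ vectors. This equivalence holds because $Z \mapsto \vecrz(Z^\top)$ is a bijection and the quantifiers over $x$ and $z$ commute. For each fixed $x$ I then invoke Lemma~\ref{lemma QMI} with $p=1$, $q = qp$, the matrix $M_x$ and the given $N \in \mathbf{\Pi}_{1,qp}$. The hypothesis of the lemma is met: if $M_{22}\le 0$ then $M_{22}\otimes xx^\top \le 0$, and otherwise $N_{22}<0$ holds directly.

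Lemma~\ref{lemma QMI} then yields, for each $x\neq 0$, scalars $\alpha(x)\ge 0$ and $\beta(x)>0$ with $M_x - \alpha(x) N \ge \operatorname{diag}(\beta(x),0)$; collecting these pointwise choices defines the required functions $\alpha$ and $\beta$. Conversely, if such functions exist, the "if" direction of the lemma gives the scalar implication for every $x$, and hence \eqref{implication N to M}. No regularity of $\alpha$ or $\beta$ in $x$ is required, so no further argument is needed.
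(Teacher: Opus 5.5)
Your proof is correct and takes essentially the same route as the paper: fix $x\neq 0$, use $Zx=(I_q\otimes x^\top)\vecrz(Z^\top)$ to turn the strict QMI into a scalar quadratic inequality in $\vecrz(Z^\top)$, and apply Lemma~\ref{lemma QMI} with $p=1$ separately for each $x$. The only cosmetic difference is in the ``if'' direction, where you cite the easy half of Lemma~\ref{lemma QMI}; the paper instead multiplies \eqref{proposition benchmark QMI} on both sides by $\begin{bmatrix}1 & \vecrz(Z^\top)^\top\end{bmatrix}$ directly, which is the same computation.
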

\begin{proof}
    We first prove the ``only if" part. Suppose that \eqref{implication N to M} holds. This implies that  
    \begin{equation} \label{M Z x}
        x^{\top} \! M_{11}x + x^{\top}\!M_{12}Zx + x^{\top}\!Z^{\top}\!M_{21}x + x^{\top}\!Z^{\top}\!M_{22}Zx \!>\! 0,
    \end{equation}
    for all $x \in \mathbb{R}^p \setminus \{0\}$ and all $Z \in \mathbb{R}^{q \times p}$ with $\vecrz (Z^{\top}) \in \mathcal{Z}_{qp}(N)$. It follows from \eqref{notation vec property} that $Zx = (I_q \otimes x^{\top})\vecrz  (Z^{\top})$. We then have that the inequality in \eqref{M Z x} is equivalent to 
    \begin{equation} \label{proposition benchmark vectorization}
        \begin{bmatrix}
        1 \\ \vecrz  (Z^{\top})
    \end{bmatrix}^{\!\!\top} \!\!\begin{bmatrix}
        x^{\top}\!M_{11}x & x^{\top}\! M_{12} \otimes x^{\top}\\
        M_{21}x \otimes x\! & M_{22}\otimes xx^{\top}
    \end{bmatrix}\!\! \begin{bmatrix}
        1 \\ \vecrz  (Z^{\top})
    \end{bmatrix} \!\!>\! 0.
    \end{equation}
    Fix an arbitrary $x \neq 0$. By Lemma \ref{lemma QMI}, there exist scalars $\alpha(x) \geq 0$ and $\beta(x)>0$ such that \eqref{proposition benchmark QMI} holds. Since this argument holds for all $x \neq 0$, we conclude that there exist functions $\alpha,\beta: \mathbb{R}^p \rightarrow \mathbb{R}$ with $\alpha(x) \geq 0$ and $\beta(x)>0$ for all $x \neq 0$ such that \eqref{proposition benchmark QMI} holds for all $x \neq 0$. This proves the ``only if" part.
    
    We next prove the ``if" part. Suppose that there exist functions $\alpha,\beta: \mathbb{R}^p \rightarrow \mathbb{R}$ with $\alpha(x) \geq 0$ and $\beta(x)>0$ for all $x \neq 0$ such that \eqref{proposition benchmark QMI} holds for all $x \neq 0$. Let $Z \in \mathbb{R}^{q \times p}$ with $\vecrz (Z^{\top}) \in \mathcal{Z}_{qp}(N)$. Multiply \eqref{proposition benchmark QMI} from left by $\begin{bmatrix}
        1 & (\vecrz (Z^{\top}))^{\top}
    \end{bmatrix}$ and from right by its transpose. This yields \eqref{proposition benchmark vectorization}, and thus, \eqref{M Z x} holds for all $x \neq 0$. Since this argument holds for all $Z \in \mathbb{R}^{q \times p}$ with $\vecrz (Z^{\top}) \in \mathcal{Z}_{qp}(N)$, we conclude that the implication \eqref{implication N to M} holds. This proves the proposition.
\end{proof}

Proposition \ref{proposition benchmark} provides necessary and sufficient conditions under which \eqref{implication N to M} holds. However, this result is computationally difficult to verify. In particular, it is non-trivial to directly verify the matrix inequality \eqref{proposition benchmark QMI} for all $x \neq 0$. This motivates us to derive alternative conditions. As we will show later on, these conditions remain necessary and sufficient under an additional structural assumption while being computationally tractable. 

To state our main results, we need two technical lemmas. The following lemma introduces a new QMI-based set containing all matrices whose vectorization satisfy a quadratic inequality.

\begin{lemma} \label{lemma general inclusion}
    Let $N \in \mathbf{\Pi}_{1,qp}$ and assume that $N_{22} = Q_{22} \otimes I$ for some $Q_{22} \in \mathbb{S}^{q}$. Define $Q \in \mathbb{S}^{p, q}$ as 
    \begin{equation} \label{definition Theta}
        Q := \begin{bmatrix}
        (N | N_{22})I + Q_{12} Q_{22}^{\dagger} Q_{21} & Q_{12} \\
        Q_{21} & Q_{22}
    \end{bmatrix},
    \end{equation}
    where $\vecrz (Q_{12}) = N_{21}$. Then, $Q \in \mathbf{\Pi}_{p,q}$. Moreover,
    \begin{equation} \label{implication N to Q}
        Z \in \mathbb{R}^{q \times p} \text{ and }\vecrz (Z^{\top}) \in \mathcal{Z}_{qp}(N)  \Rightarrow Z \in \mathcal{Z}_{q}(Q).
    \end{equation}
\end{lemma}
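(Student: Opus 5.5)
The plan is to translate everything in terms of the vectorized variable into trace expressions in $Z$, complete the square, and then pass from a scalar (trace) inequality to a matrix inequality using the elementary bound $D \leq \trace(D) I$ for $D \geq 0$.

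\textbf{Step 1: translate the structure of $N$.} Write $\vecrz(Z^\top)$ for $Z\in\mathbb{R}^{q\times p}$. Using \eqref{notation vec property} and \eqref{notation tr property}, I will verify the two identities
$$
N_{12}\vecrz(Z^\top) = (\vecrz(Q_{12}))^\top \vecrz(Z^\top) = \trace(Q_{12}^\top Z^\top) = \trace(ZQ_{12})
$$
and
$$
(\vecrz(Z^\top))^\top (Q_{22}\otimes I)\vecrz(Z^\top) = \trace(ZZ^\top Q_{22}) = \trace(Z^\top Q_{22}Z).
$$
Hence $\vecrz(Z^\top)\in\mathcal{Z}_{qp}(N)$ is equivalent to
$$
N_{11} + 2\trace(Q_{12}Z) + \trace(Z^\top Q_{22} Z) \geq 0 .
$$
We also have $N_{22}^\dagger = Q_{22}^\dagger\otimes I$. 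By the same computation this gives $N_{12}N_{22}^\dagger N_{21} = \trace(Q_{12}Q_{22}^\dagger Q_{21})$, and therefore $N|N_{22} = N_{11}-\trace(Q_{12}Q_{22}^\dagger Q_{21})$.

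\textbf{Step 2: show $Q\in\mathbf{\Pi}_{p,q}$.} From $N_{22}=Q_{22}\otimes I\leq 0$ we get $Q_{22}\leq 0$, since the eigenvalues of a Kronecker product with $I$ are those of $Q_{22}$. For the kernel condition, let $Q_{22}v=0$. Then $(Q_{22}\otimes I)(v\otimes e_j)=0$ for each standard basis vector $e_j\in\mathbb{R}^p$. By $\ker N_{22}\subseteq\ker N_{12}$ this yields $\vecrz(Q_{12})^\top(v\otimes e_j)=e_j^\top Q_{12}v=0$ for all $j$, so $Q_{12}v=0$. Finally, by construction $Q|Q_{22}=(N|N_{22})I$, which is $\geq 0$ because $N\in\mathbf{\Pi}_{1,qp}$.

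\textbf{Step 3: the implication \eqref{implication N to M}.} Set $\tilde Z := Z+Q_{22}^\dagger Q_{21}$. The kernel inclusion from Step 2 gives $Q_{12}=Q_{12}Q_{22}^\dagger Q_{22}$, which is what the square-completion needs. Completing the square then shows
$$
\begin{bmatrix} I\\ Z\end{bmatrix}^\top Q\begin{bmatrix} I\\ Z\end{bmatrix} = (N|N_{22})I + \tilde Z^\top Q_{22}\tilde Z .
$$
In the same way, the scalar inequality from Step 1 rewrites as
$$
(N|N_{22}) + \trace(\tilde Z^\top Q_{22}\tilde Z)\geq 0 .
$$
Let $D:=-\tilde Z^\top Q_{22}\tilde Z$, which is $\geq 0$ since $Q_{22}\leq 0$. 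The scalar inequality says $\trace(D)\leq N|N_{22}$. Because $D\geq 0$, we have $D\leq \lambda_{\max}(D)I\leq \trace(D)I\leq (N|N_{22})I$. This is exactly $Z\in\mathcal{Z}_q(Q)$.

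\textbf{Main obstacle.} The only delicate part is the index bookkeeping: matching the ordering in $\vecrz(Z^\top)$ (rows of $Z$ stacked) with $\vecrz(Q_{12})$ (columns of $Q_{12}$ stacked) and with the Kronecker structure $Q_{22}\otimes I$. This must be done so that the trace identities in Step 1 and the kernel argument in Step 2 come out as stated. I would check these identities carefully via \eqref{notation vec property} first. The trace-to-matrix step is the conceptual crux but is elementary.
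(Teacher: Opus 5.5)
Your proposal is correct and follows essentially the same route as the paper. The Kronecker structure $N_{22}=Q_{22}\otimes I$ gives $Q_{22}\leq 0$, the kernel inclusion and $Q|Q_{22}=(N|N_{22})I$; the vectorized inequality is then rewritten via \eqref{notation tr property} as $N|N_{22}+\trace(\tilde Z^\top Q_{22}\tilde Z)\geq 0$ with $\tilde Z=Z+Q_{22}^{\dagger}Q_{21}$, and lifted to the matrix inequality, where your bound $D\leq\trace(D)I$ for $D\geq 0$ spells out the step the paper states without comment (your Step 3 heading should cite \eqref{implication N to Q}, not \eqref{implication N to M}).
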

\vspace{0.5em}

\begin{proof}
    We first prove that $Q \in \mathbf{\Pi}_{p,q}$. Since $N \in \mathbf{\Pi}_{1,qp}$ and $N_{22} = Q_{22} \otimes I$, it follows that $Q|Q_{22} = (N | N_{22})I \geq 0$ and $Q_{22} \leq 0$. To show $\ker Q_{22} \subseteq \ker Q_{12}$, we suppose that a vector $v\in \mathbb{R}^{q}$ is such that $Q_{22} v = 0$. This implies that $N_{22}( v \otimes I) = 0$ and hence $N_{12}( v \otimes I) = 0$. It follows from \eqref{notation vec property} that $Q_{12}v = 0$ and therefore $\ker Q_{22} \subseteq \ker Q_{12}$. We conclude that $Q \in \mathbf{\Pi}_{p,q}$.
    
    Subsequently, we prove the implication \eqref{implication N to Q}. Let $Z \in \mathbb{R}^{q \times p}$ with $\vecrz (Z^{\top}) \in \mathcal{Z}_{qp}(N)$. We have that
    \begin{equation} \label{Z in ZN}
        N|N_{22} \!+\! (\vecrz(Z^{\top}) \!+ \!N_{22}^{\dagger}N_{21})^{\!\top}N_{22}(\vecrz(Z^{\top}) \!+ \!N_{22}^{\dagger}N_{21}) \!\geq\! 0.
    \end{equation}
    Note that $N_{22}^{\dagger}N_{21} = \vecrz (Q_{12}Q_{22}^{\dagger})$. It follows from \eqref{notation tr property} that
    \begin{equation} \label{lemma general inclusion with tr}
        N|N_{22} + \trace ((Z + Q_{22}^{\dagger}Q_{21})^{\top}Q_{22}(Z + Q_{22}^{\dagger}Q_{21})) \geq 0. 
    \end{equation}
    This implies that
    \begin{equation}\label{lemma general inclusion without tr}
        (N|N_{22})I + (Z + Q_{22}^{\dagger}Q_{21})^{\top}Q_{22}(Z + Q_{22}^{\dagger}Q_{21}) \geq 0. 
    \end{equation}
    Equivalently,
    $$
    (N|N_{22})I + Q_{12} Q_{22}^{\dagger} Q_{21} + Z^{\top}Q_{21} + Q_{12}Z + Z^{\top}Q_{22}Z \geq 0. 
    $$
    Therefore, $Z \in \mathcal{Z}_{q}(Q)$, which proves the lemma.
\end{proof}

Lemma \ref{lemma general inclusion} establishes a new implication \eqref{implication N to Q}, involving the QMI solution set $\mathcal{Z}_{q}(Q)$. We note that the converse of the implication in \eqref{implication N to Q} does not hold in general. To elaborate, let $q = p = 2$ and choose
$$
N = \begin{bmatrix}
    1 & 0 \\
    0 & -I_4
\end{bmatrix}.
$$
We then compute
$$
Q = \begin{bmatrix}
    I_2 & 0\\
    0 & -I_2
\end{bmatrix}.
$$
Clearly, $I_2 \in \mathcal{Z}_{2}(Q)$ but $\vecrz (I_2^{\top}) \notin \mathcal{Z}_{4}(N)$. This shows that the converse of \eqref{implication N to Q} does not hold in general. The following lemma provides another technical result that will be used in deriving the main results of this section.

\begin{lemma} \label{lemma equal sets}
    Let $y \in \mathbb{R}^n$, $A \in \mathbb{R}^{p \times q}$ and $c \in \mathbb{R}$ with $c \geq 0$. Define
    $$
    \begin{aligned}
        \mathcal{S}_1 &:= \{By \mid B \in \mathbb{R}^{q \times n} \text{ and } \|A B\|_2 \leq c\},\\
        \mathcal{S}_2 &:= \{By \mid B \in \mathbb{R}^{q \times n} \text{ and } \|A B\|_{\mathrm{F}} \leq c\}.        
    \end{aligned}
    $$
    Then, $\mathcal{S}_1=\mathcal{S}_2$.
\end{lemma}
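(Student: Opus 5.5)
One inclusion is immediate and the other needs a construction. Since $\|AB\|_2 \leq \|AB\|_{\mathrm{F}}$ for every $B$, any $B$ admissible for $\mathcal{S}_2$ is admissible for $\mathcal{S}_1$, so $\mathcal{S}_2 \subseteq \mathcal{S}_1$. The real work is the reverse inclusion. Given $z = By \in \mathcal{S}_1$ with $\|AB\|_2 \leq c$, we must produce some $\tilde B$ with $\tilde B y = z$ and $\|A\tilde B\|_{\mathrm{F}} \leq c$.

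If $y = 0$, then $z = 0 = 0\cdot y$ with $\tilde B = 0$, which is admissible since $c \geq 0$. So assume $y \neq 0$. The natural choice is the rank-one matrix
$$
\tilde B := \frac{B y y^{\top}}{y^{\top} y} = B\,\frac{yy^{\top}}{\|y\|_2^2}.
$$
It satisfies $\tilde B y = By = z$, and $A\tilde B = (AB)\,\Pi_y$, where $\Pi_y := yy^\top/\|y\|_2^2$ is the orthogonal projector onto $\operatorname{span}\{y\}$.

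The key step is to bound the Frobenius norm of this rank-one product. Since $A\tilde B$ has rank at most one, its Frobenius norm equals its spectral norm. This is the same fact used in the proof of Lemma~\ref{lemma W set inclusion}, and can be checked directly: $\|uv^\top\|_{\mathrm{F}} = \|u\|_2\|v\|_2 = \|uv^\top\|_2$. Concretely,
$$
\|A\tilde B\|_{\mathrm{F}} = \frac{\|ABy\|_2\,\|y\|_2}{\|y\|_2^2} = \frac{\|ABy\|_2}{\|y\|_2} \leq \|AB\|_2 \leq c.
$$
Hence $z \in \mathcal{S}_2$, which gives $\mathcal{S}_1 \subseteq \mathcal{S}_2$ and therefore $\mathcal{S}_1 = \mathcal{S}_2$.

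The only potential obstacle is spotting the rank-one replacement $\tilde B$. Once $B$ is projected onto $y$, the image vector $By$ is unchanged while $AB$ collapses to rank one, so the two norms coincide. No earlier result beyond elementary norm facts is needed.
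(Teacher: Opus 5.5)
Your proof is correct and takes the same route as the paper. The paper also uses the trivial inclusion $\mathcal{S}_2 \subseteq \mathcal{S}_1$ and, for $y \neq 0$, the rank-one replacement $\bar{B} = z y^{\top}/\|y\|_2^2$, which is exactly your $B yy^{\top}/\|y\|_2^2$; it then bounds $\|A\bar{B}\|_{\mathrm{F}} = \|Az\|_2/\|y\|_2 \leq \|AB\|_2 \leq c$.
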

\begin{proof}
    When $y = 0$, the lemma trivially holds because $\mathcal{S}_1 = \mathcal{S}_2 =\{0\}$. Suppose that $y \neq 0$. Since $\|A B\|_2 \leq \|A B\|_{\mathrm{F}}$ for any $B \in \mathbb{R}^{q \times n}$, the inclusion $\mathcal{S}_2 \subseteq \mathcal{S}_1$ holds. We next prove $\mathcal{S}_1 \subseteq \mathcal{S}_2$. Let $z \in \mathcal{S}_1$. This implies that there exists $B \in \mathbb{R}^{q \times n}$ such that $z = By$ and $\|AB\|_2 \leq c$. It follows that $\|A z\|_2 = \|ABy\|_2 \leq c\|y\|_2$. Define 
    $\bar{B}:= \frac{z y^{\top}}{\|y\|_2^2}$. Then, $\bar{B} y = z$ and 
    $$
    \|A\bar{B}\|_{\mathrm{F}} = \frac{\|Az y^{\top}\|_{\mathrm{F}}}{\|y\|_2^2} = \frac{\|Az\|_2}{\|y\|_2} \leq c.
    $$
    Therefore, $z \in \mathcal{S}_2$. Since this argument holds for all $z \in \mathcal{S}_1$, we then conclude that $\mathcal{S}_1 \subseteq \mathcal{S}_2$. This proves the lemma.
\end{proof}

Building on Lemmas~\ref{lemma general inclusion} and~\ref{lemma equal sets}, we present the main results of this section.

\begin{theorem} \label{theorem main}
    Let $M \in \mathbb{S}^{p, q}$ and $N \in \mathbf{\Pi}_{1,qp}$. Assume that $N_{22} = Q_{22} \otimes I_p$ for some $Q_{22} \in \mathbb{S}^{q}$. Define $Q \in \mathbb{S}^{p, q}$ as in \eqref{definition Theta}, where $\vecrz (Q_{12}) = N_{21}$. Then, the implication \eqref{implication N to M} holds if and only if
    \begin{equation} \label{implication Q to M}
        Z \in \mathcal{Z}_{q}(Q) \Rightarrow Z \in \mathcal{Z}_{q}^+(M).
    \end{equation}
    Moreover, assume that either $M_{22} \leq 0$ or $N_{22} < 0$. Then, the implication \eqref{implication N to M} holds if and only if there exist $\alpha \geq 0$ and $\beta > 0$ such that
    \begin{equation} \label{theorem main QMI}
        M - \alpha Q \geq \begin{bmatrix}
            \beta I & 0\\
            0 & 0
        \end{bmatrix}.
    \end{equation}
\end{theorem}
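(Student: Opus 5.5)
The proof splits into two parts. First we show that \eqref{implication N to M} is equivalent to \eqref{implication Q to M}. Then we combine this with Lemma~\ref{lemma QMI} applied to the pair $(M,Q)$.

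\emph{Step 1: \eqref{implication Q to M} implies \eqref{implication N to M}.} Suppose \eqref{implication Q to M} holds and let $Z$ satisfy $\vecrz(Z^{\top}) \in \mathcal{Z}_{qp}(N)$. Lemma~\ref{lemma general inclusion} gives $Z \in \mathcal{Z}_q(Q)$, and therefore $Z \in \mathcal{Z}_q^+(M)$.

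\emph{Step 2: \eqref{implication N to M} implies \eqref{implication Q to M}.} This is the main obstacle, because the converse of \eqref{implication N to Q} fails in general. We only need it pointwise in a direction $x$, since $Z\in\mathcal{Z}^+_q(M)$ means \eqref{M Z x} holds for every $x\neq 0$, and that expression depends on $Z$ only through $Zx$.

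Fix $Z \in \mathcal{Z}_q(Q)$ and $x \neq 0$. The aim is to build $\bar Z$ with $\bar Z x = Zx$ and $\vecrz(\bar Z^{\top}) \in \mathcal{Z}_{qp}(N)$. Then \eqref{implication N to M} applied to $\bar Z$ gives \eqref{M Z x} for $Z$ at this $x$.

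To construct $\bar Z$, write $Q_{22} = -F^{\top}F$ and set $Y := Z + Q_{22}^{\dagger}Q_{21}$. By the generalized Schur complement argument (as in \eqref{Z in ZN}--\eqref{lemma general inclusion without tr}, using $Q\in\mathbf{\Pi}_{p,q}$), membership $Z\in\mathcal{Z}_q(Q)$ is equivalent to $\|FY\|_2^2 \le N|N_{22}$. By \eqref{lemma general inclusion with tr}, $\vecrz(\bar Z^{\top})\in\mathcal{Z}_{qp}(N)$ is equivalent to $\|F\bar Y\|_{\mathrm F}^2 \le N|N_{22}$, where $\bar Y = \bar Z + Q_{22}^{\dagger}Q_{21}$.

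Now apply Lemma~\ref{lemma equal sets} with $A=F$, $c=(N|N_{22})^{1/2}$ and $y=x$. It yields $\bar Y$ with $\bar Y x = Yx$ and $\|F\bar Y\|_{\mathrm F}\le c$. The explicit choice from the proof of that lemma is $\bar Y = Yxx^{\top}/\|x\|^2$. Setting $\bar Z := \bar Y - Q_{22}^{\dagger}Q_{21}$ gives $\bar Zx = Zx$, as required. Since $x$ was arbitrary, $Z\in\mathcal{Z}_q^+(M)$.

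We still need to check that the trace identity in the proof of Lemma~\ref{lemma general inclusion} holds as an equivalence, not only an implication. This follows from \eqref{notation tr property} together with the structure $N_{22}=Q_{22}\otimes I$ and $N_{22}^{\dagger}=Q_{22}^{\dagger}\otimes I$.

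\emph{Step 3: the LMI characterization.} By Lemma~\ref{lemma general inclusion}, $Q \in \mathbf{\Pi}_{p,q}$. If $M_{22}\le 0$, Lemma~\ref{lemma QMI} applies directly to $(M,Q)$ and shows that \eqref{implication Q to M} is equivalent to \eqref{theorem main QMI}. If instead $N_{22}<0$, then $Q_{22}\otimes I<0$ forces $Q_{22}<0$, and Lemma~\ref{lemma QMI} applies again. Combining this with the equivalence from Steps 1--2 proves the theorem.
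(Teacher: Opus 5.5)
Your proposal is correct and follows essentially the same route as the paper: the ``if'' direction via Lemma~\ref{lemma general inclusion}; the ``only if'' direction by fixing $x\neq 0$ and applying Lemma~\ref{lemma equal sets} (your $F$ plays the role of the paper's $(-Q_{22})^{1/2}$) to obtain $\bar Z$ with $\bar Z x = Zx$ and $\vecrz(\bar Z^{\top})\in\mathcal{Z}_{qp}(N)$; and the LMI part via Lemma~\ref{lemma QMI} applied to $(M,Q)$. You also state explicitly that the trace identity must hold as an equivalence rather than only an implication; the paper uses this without comment, so it is a useful clarification.
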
\vspace{0.5em}
\begin{proof}
    We now prove the first statement. The ``if" part follows directly by combining \eqref{implication N to Q} and \eqref{implication Q to M}, so we focus on the ``only if" part. Suppose that \eqref{implication N to M} holds. Let $Z \in \mathcal{Z}_{q}(Q)$. From Lemma \ref{lemma general inclusion}, it follows that $Q \in \mathbf{\Pi}_{p,q}$ and thus \eqref{lemma general inclusion without tr} holds. Equivalently,
    \begin{equation} \label{theorem main 2 norm}
        \|(-Q_{22})^{\frac{1}{2}}(Z + Q_{22}^{\dagger}Q_{21})\|_2 \leq (N|N_{22})^{\frac{1}{2}}.
    \end{equation}
    Let $x \in \mathbb{R}^p \setminus \{0\}$. Applying Lemma \ref{lemma equal sets} with $y = x$, $A = (-Q_{22})^{\frac{1}{2}}$ and $c = (N|N_{22})^{\frac{1}{2}}$, the inequality \eqref{theorem main 2 norm} implies that there exists $\bar{Z} \in \mathbb{R}^{q \times p}$ satisfying $\bar{Z}x = Zx$ and 
    \begin{equation} \label{theorem main F norm}
    \|(-Q_{22})^{\frac{1}{2}}(\bar{Z} + Q_{22}^{\dagger}Q_{21})\|_{\mathrm{F}} \leq (N|N_{22})^{\frac{1}{2}}.
    \end{equation}
    We rewrite \eqref{theorem main F norm} as
    $$
    N|N_{22} + \trace ((\bar{Z} + Q_{22}^{\dagger}Q_{21})^{\top}Q_{22}(\bar{Z} + Q_{22}^{\dagger}Q_{21})) \geq 0.
    $$
    It follows that $\vecrz (\bar{Z}^{\top}) \in \mathcal{Z}_{qp}(N)$. Hence $\bar{Z} \in \mathcal{Z}_{q}^+(M)$ and
    $$
    x^{\top}M_{11}x+x^{\top}M_{12}\bar{Z}x + x^{\top}\bar{Z}^{\top}M_{21}x + x^{\top}\bar{Z}^{\top}M_{22}\bar{Z}x > 0.
    $$
    This yields that \eqref{M Z x} holds. Since this argument holds for all nonzero $x$, we conclude that $Z \in \mathcal{Z}_{q}^+(M)$, and thus \eqref{implication Q to M} holds. 
    
    Note that $N_{22} < 0$ implies $Q_{22} < 0$. Under either condition $M_{22} \leq 0$ or $Q_{22} < 0$, the second statement follows directly from Lemma~\ref{lemma QMI}. This proves the theorem.
\end{proof}

In Theorem \ref{theorem main}, we have provided conditions under which a strict QMI is implied by a quadratic inequality in vectorized variables. The proposed conditions are based on the LMI \eqref{theorem main QMI}, which provides clear numerical benefits when compared to the condition \eqref{proposition benchmark QMI} in Proposition \ref{proposition benchmark}. However, the result is only applicable under the assumption that $N_{22} = Q_{22} \otimes I$ for some $Q_{22}$. As we will show, this assumption is satisfied in the data-driven setting under a condition on the noise model. 

Next, we present a non-strict version of Theorem \ref{theorem main}.
\begin{theorem} \label{theorem main nonstrict}
    Let $M \in \mathbb{S}^{p, q}$ and $N \in \mathbf{\Pi}_{1,qp}$. Assume that $N_{22} = Q_{22} \otimes I_p$ for some $Q_{22} \in \mathbb{S}^{q}$. Define $Q \in \mathbb{S}^{p, q}$ as in \eqref{definition Theta}, where $\vecrz (Q_{12}) = N_{21}$. Then, 
    \begin{equation}\label{implication N to M nonstrict}
        Z \in \mathbb{R}^{q \times p} \text{ and }\vecrz (Z^{\top}) \in \mathcal{Z}_{qp}(N) \Rightarrow Z \in \mathcal{Z}_{q}(M),
    \end{equation}
    if and only if
    \begin{equation} \label{implication Q to M nonstrict}
        Z \in \mathcal{Z}_{q}(Q) \Rightarrow Z \in \mathcal{Z}_{q}(M).
    \end{equation}
    Moreover, assume that $N|N_{22} > 0$. Then, the implication \eqref{implication N to M nonstrict} holds if and only if there exists an $\alpha \geq 0$ such that
    \begin{equation} \label{theorem main nonstrict QMI}
        M - \alpha Q \geq 0.
    \end{equation}
\end{theorem}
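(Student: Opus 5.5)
The first equivalence will be proved exactly as in Theorem~\ref{theorem main}, with strict inequalities replaced by non-strict ones. For the ``if'' direction, chain \eqref{implication N to Q} from Lemma~\ref{lemma general inclusion} with \eqref{implication Q to M nonstrict}.

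For the ``only if'' direction, assume \eqref{implication N to M nonstrict} holds and take $Z \in \mathcal{Z}_q(Q)$. Since $Q \in \mathbf{\Pi}_{p,q}$, the spectral norm bound \eqref{theorem main 2 norm} holds. Fix any $x \in \mathbb{R}^p$; the case $x=0$ is trivial. Lemma~\ref{lemma equal sets} then gives a $\bar Z$ with $\bar Z x = Zx$ that satisfies the Frobenius bound \eqref{theorem main F norm}. Hence $\vecrz(\bar Z^\top) \in \mathcal{Z}_{qp}(N)$, so $\bar Z \in \mathcal{Z}_q(M)$. Evaluating the quadratic form at $x$ and using $\bar Zx = Zx$ gives $x^\top(\cdot)x \geq 0$ for $Z$. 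As $x$ is arbitrary, $Z \in \mathcal{Z}_q(M)$.

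For the second statement, the first part reduces the claim to a non-strict matrix S-lemma for $Q$. Sufficiency is routine: if $M - \alpha Q \geq 0$, multiply by $\begin{bmatrix} I & Z^\top\end{bmatrix}$ on the left and its transpose on the right; for $Z \in \mathcal{Z}_q(Q)$ this gives a quantity $\geq \alpha \cdot (\text{QMI of }Q) \geq 0$. Necessity requires a non-strict S-lemma under a Slater-type condition, and I would take it from \cite{DDCQMI}. The hypothesis $N|N_{22} > 0$ yields $Q|Q_{22} = (N|N_{22})I > 0$, which is exactly the generalized Slater condition there. In particular, $Z = -Q_{22}^\dagger Q_{21}$ gives $\begin{bmatrix} I \\ Z\end{bmatrix}^\top Q \begin{bmatrix} I \\ Z\end{bmatrix} = Q|Q_{22} > 0$. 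The matrix S-lemma of \cite{DDCQMI} (the non-strict version, e.g. their Theorem~4.7 / Corollary~4.8 type result) states that for $Q \in \mathbf{\Pi}_{p,q}$ with a Slater point, $\mathcal{Z}_q(Q) \subseteq \mathcal{Z}_q(M)$ if and only if $M - \alpha Q \geq 0$ for some $\alpha \geq 0$. Invoking that result finishes the proof.

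The main obstacle is pinning down the exact form of the non-strict S-lemma in \cite{DDCQMI} and checking its hypotheses. Unlike Lemma~\ref{lemma QMI}, it should need no assumption on $M_{22}$, only $Q \in \mathbf{\Pi}_{p,q}$ and the Slater condition. If the cited version requires more, for example $Q_{22}$ nonsingular or conditions on $M_{22}$, the fallback is a direct argument. For each $x$ and each $Z \in \mathcal{Z}_q(Q)$, one has $x^\top(\cdot)x \geq 0$, which is a scalar quadratic implication in $\zeta = Zx$ ranging over the ellipsoid $\{\zeta : \|(-Q_{22})^{1/2}(\zeta + Q_{22}^\dagger Q_{21}x)\|^2 \leq (N|N_{22})\|x\|^2\}$. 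The classical S-lemma applied there produces multipliers depending on $x$, and showing these can be chosen uniformly would be the hard step. I would therefore rely on the cited matrix result rather than attempt that.
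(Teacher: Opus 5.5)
Your proposal is correct and follows the paper's proof. The first equivalence comes from rerunning the proof of Theorem~\ref{theorem main} with non-strict inequalities, and your pointwise-in-$x$ argument via Lemma~\ref{lemma equal sets} is exactly that adaptation; the second part reduces to the non-strict matrix S-lemma \cite[Thm.~4.7]{DDCQMI} applied to $Q$. The paper states that theorem's hypothesis as ``$Q$ has $p$ positive eigenvalues.'' For $Q \in \mathbf{\Pi}_{p,q}$ with $Q|Q_{22} = (N|N_{22})I > 0$, this is equivalent to the Slater condition you check, so your fallback argument is not needed.
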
\vspace{0.5em}
\begin{proof}
    The equivalence between \eqref{implication N to M nonstrict} and \eqref{implication Q to M nonstrict} follows directly by adapting the proof of Theorem \ref{theorem main}. It then suffices to show that under the assumption that $N|N_{22}>0$, \eqref{implication Q to M nonstrict} holds if and only if there exists an $\alpha \geq 0$ satisfying \eqref{theorem main nonstrict QMI}. Since $Q|Q_{22} = N|N_{22}I$ and $N|N_{22}>0$, $Q$ has $p$ positive eigenvalues. By \cite[Thm.~4.7]{DDCQMI}, we conclude that \eqref{implication Q to M nonstrict} holds if and only if there exists an $\alpha \geq 0$ satisfying \eqref{theorem main nonstrict QMI}. This proves the theorem.
\end{proof}

\section{Data-driven control design} \label{section Data-driven control design}
In what follows, we apply the results in Section \ref{section Methodology} to data-driven $M$-control design. First, Section \ref{section The general M-control problem} studies the general problem of $M$-control. The results are then specialized to data-driven quadratic stabilization, followed by the incorporation of $\mathcal{H}_2$ and $\mathcal{H}_{\infty}$ performance specifications. Finally, Section \ref{section Discussion on the instantaneous noise bound} investigates the advantages of our $M$-control design approaches over the existing approaches when the noise is instantaneously bounded. Recall that $\Phi \in \mathbf{\Pi}_{1,nT}$. In the rest of the paper, we impose the following additional assumption on $\Phi$.

\begin{assumption}
    The matrix $\Phi_{22} = \phi \otimes I_n$ for some $\phi \in \mathbb{S}^T$.
\end{assumption}

\subsection{The general $\mathit{M}$-control problem} \label{section The general M-control problem}
We now address Problem~\ref{problem 1}, namely, we derive conditions under which the data $\mathcal{D}$ are informative for $M$-control with respect to $\mathcal{W}_{\mathrm{F}}$. Recall that $M: \Theta \rightarrow \mathbb{S}^{n,n+m}$. Define $N$ as in \eqref{definition N}, i.e.,
$$
N = 
    \begin{bmatrix}
     1 & 0 \\
    \vecrz(X_+)\! &\! -\begin{bmatrix}
    X_-\! \\ U_-\!
\end{bmatrix}^{\!\!\top}\!\! \otimes I
\end{bmatrix}^{\!\!\top}\!\Phi\!
\begin{bmatrix}
     1 & 0 \\
    \!\vecrz(X_+)\! &\! -\begin{bmatrix}
    X_-\! \\ U_-\!
\end{bmatrix}^{\!\!\top}\!\! \otimes I
\end{bmatrix}\!.
$$
It follows from \eqref{vec A B} that $(A,B) \in \Sigma_{\mathcal{D}}(\mathcal{W}_{\mathrm{F}})$ if and only if $\vecrz(\begin{bmatrix}
    A & B
\end{bmatrix}) \in \mathcal{Z}_{(n+m)n}(N)$. The problem then amounts to find conditions under which there exists a $\theta \in \Theta$ such that 
\begin{equation} \label{implication N to M Mcontrol}
    \begin{aligned}
    Z  \in  \mathbb{R}^{(n+m) \times n} \text{ and }\vecrz (Z^{\top})  \in  \mathcal{Z}_{(n+m)n}(N) &\\
    \Rightarrow   Z  \in  \mathcal{Z}_{n+m}^+&(M(\theta)).
\end{aligned}
\end{equation}
We will apply Theorem \ref{theorem main} to derive a solution. Note that $M_{22}(\theta) \leq 0$ for all $\theta \in \Theta$ and $N \in \mathbf{\Pi}_{1,n(n+m)}$. Define 
\begin{equation}\label{definition Theta 22}
    Q_{22}:= \begin{bmatrix}
    X_- \\
    U_-
\end{bmatrix} \phi \begin{bmatrix}
    X_-\\
    U_-
\end{bmatrix}^{\top}.
\end{equation}
Since $\Phi_{22} = \phi \otimes I_n$, we have $N_{22} = Q_{22} \otimes I$. Therefore, the second statement in Theorem \ref{theorem main} is applicable. Specifically, define $Q \in \mathbb{R}^{n + (n+m)}$ as 
\begin{equation} \label{definition Q ddc}
    Q := \begin{bmatrix}
        (N | N_{22})I + Q_{12} Q_{22}^{\dagger} Q_{21} & Q_{12} \\
        Q_{21} & Q_{22}
    \end{bmatrix},
\end{equation}
where $\vecrz (Q_{12}) = N_{21}$. By Theorem~\ref{theorem main}, the implication \eqref{implication N to M Mcontrol} holds if and only if there exist scalars $\alpha \geq 0$ and $\beta > 0$ such that
    \begin{equation} \label{theorem M control LMI}
        M(\theta)- \alpha Q \geq \begin{bmatrix}
            \beta I & 0\\
            0 & 0
        \end{bmatrix}.
   \end{equation}
Building on this, the design conditions are presented in the following theorem.
\begin{theorem} \label{theorem M control}
    Let $\Theta$ be a set of control parameters and $M: \Theta \rightarrow \mathbb{S}^{n,n+m}$ be a mapping with $M_{22}(\theta) \leq 0$ for all $\theta \in \Theta$. Let $Q \in \mathbb{S}^{n,n+m}$ be defined as in \eqref{definition Q ddc}. Then, the data $\mathcal{D}$ are informative for $M$-control with respect to $\mathcal{W}_{\mathrm{F}}$ if and only if there exist a $\theta \in \Theta$ and scalars $\alpha \geq 0$ and $\beta > 0$ such that \eqref{theorem M control LMI} holds.
\end{theorem}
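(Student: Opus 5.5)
The plan is to reduce informativity to the implication \eqref{implication N to M Mcontrol} and then apply the second statement of Theorem~\ref{theorem main} separately for each fixed $\theta\in\Theta$.

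The first step is to link compatible systems with the vectorized set. By Definition~\ref{definition 1}, the data are informative exactly when some $\theta\in\Theta$ makes \eqref{definition 1 M theta A B} hold for all $(A,B)\in\Sigma_{\mathcal{D}}(\mathcal{W}_{\mathrm{F}})$. Put $Z:=\begin{bmatrix} A & B\end{bmatrix}^\top\in\mathbb{R}^{(n+m)\times n}$. Then $\vecrz(Z^\top)=\vecrz(\begin{bmatrix} A & B\end{bmatrix})$, and \eqref{definition 1 M theta A B} reads $Z\in\mathcal{Z}^+_{n+m}(M(\theta))$. The vectorization of \eqref{dataeq DT} leading to \eqref{vec A B} shows that $(A,B)\in\Sigma_{\mathcal{D}}(\mathcal{W}_{\mathrm{F}})$ if and only if $\vecrz(Z^\top)\in\mathcal{Z}_{(n+m)n}(N)$. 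The map $(A,B)\mapsto Z$ is a bijection onto $\mathbb{R}^{(n+m)\times n}$. Hence, for each fixed $\theta$, ``\eqref{definition 1 M theta A B} for all compatible $(A,B)$'' is exactly \eqref{implication N to M Mcontrol}. The dimensions match Theorem~\ref{theorem main} with $p=n$ and $q=n+m$.

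The second step is to check the hypotheses of Theorem~\ref{theorem main}:
\begin{itemize}
\item $N\in\mathbf{\Pi}_{1,n(n+m)}$ holds by Lemma~\ref{lemma N in Pi}.
\item $M_{22}(\theta)\le 0$ holds by assumption on $M$.
\item The Kronecker structure $N_{22}=Q_{22}\otimes I_n$ needs to be verified.
\end{itemize}
For the third item, the $(2,2)$ block of $N$ is
\[
N_{22}=\left(\begin{bmatrix} X_-\\ U_-\end{bmatrix}\otimes I_n\right)(\phi\otimes I_n)\left(\begin{bmatrix} X_-\\ U_-\end{bmatrix}^\top\otimes I_n\right).
\]
The mixed-product rule $(A\otimes B)(C\otimes D)=AC\otimes BD$ then gives $N_{22}=Q_{22}\otimes I_n$, with $Q_{22}$ as in \eqref{definition Theta 22}. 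The matrix $Q$ in \eqref{definition Q ddc} is therefore exactly the matrix \eqref{definition Theta} of Theorem~\ref{theorem main}, and it depends only on the data and $\Phi$, not on $\theta$.

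The third step combines these. For every $\theta$, Theorem~\ref{theorem main} says that \eqref{implication N to M Mcontrol} holds if and only if there are $\alpha\ge0$ and $\beta>0$ with \eqref{theorem M control LMI}.
\begin{itemize}
\item ($\Rightarrow$) If the data are informative, take the $\theta$ from the definition. Theorem~\ref{theorem main} then supplies $\alpha$ and $\beta$.
\item ($\Leftarrow$) Conversely, a triple $(\theta,\alpha,\beta)$ satisfying \eqref{theorem M control LMI} yields \eqref{implication N to M Mcontrol} for that $\theta$, which is informativity.
\end{itemize}

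The main place where care is needed is the bookkeeping of the vectorization convention. I must confirm that $\vecrz(Z^\top)$ with $Z=\begin{bmatrix} A & B\end{bmatrix}^\top$ coincides with the vector $\vecrz(\begin{bmatrix} A & B\end{bmatrix})$ that appears in \eqref{vec A B}. I must also confirm that the Kronecker factor order ($\cdot\otimes I_n$, with $I_p=I_n$) agrees with the form required in Theorem~\ref{theorem main}. Everything else is a direct invocation of earlier results.
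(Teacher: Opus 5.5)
Your proposal is correct and follows essentially the same route as the paper. Both reduce informativity to the implication \eqref{implication N to M Mcontrol} for a fixed $\theta$, obtain $N_{22}=Q_{22}\otimes I_n$ from $\Phi_{22}=\phi\otimes I_n$, and invoke the second statement of Theorem~\ref{theorem main}, with Lemma~\ref{lemma N in Pi} and $M_{22}(\theta)\le 0$ supplying the hypotheses. The only cosmetic difference is in the ``if'' direction: the paper argues it directly, using Lemma~\ref{lemma general inclusion} to get $\begin{bmatrix} A & B\end{bmatrix}^\top\in\mathcal{Z}_{n+m}(Q)$ and then multiplying \eqref{theorem M control LMI} by $\begin{bmatrix} I & A & B\end{bmatrix}$ and its transpose, which is exactly what the ``if'' part of Theorem~\ref{theorem main} does internally.
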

\begin{proof}
    To prove the ``if" statement, we suppose that there exist $\theta$, $\alpha$ and $\beta$ such that \eqref{theorem M control LMI} holds. Let $(A,B)\in \Sigma_{\mathcal{D}}(\mathcal{W}_{\mathrm{F}})$. This implies that $\vecrz (\begin{bmatrix}
        A & B
    \end{bmatrix}) \in \mathcal{Z}_{(n+m)n}(N)$. It follows from Lemma~\ref{lemma general inclusion} that $\begin{bmatrix}
        A & B
    \end{bmatrix}^{\top} \in \mathcal{Z}_{n+m}(Q)$. Multiply \eqref{theorem M control LMI} from left by $\begin{bmatrix}
        I & A & B
    \end{bmatrix}$ and from right by its transpose. This yields \eqref{definition 1 M theta A B}. Since this argument holds for all $(A,B)\in \Sigma_{\mathcal{D}}(\mathcal{W}_{\mathrm{F}})$, we conclude that the data $\mathcal{D}$ are informative for $M$-control with respect to $\mathcal{W}_{\mathrm{F}}$.

    We next prove the ``only if" part. Suppose that the data $\mathcal{D}$ are informative for $M$-control with respect to $\mathcal{W}_{\mathrm{F}}$. This implies that there exists a $\theta \in \Theta$ such that \eqref{definition 1 M theta A B} holds for all $\vecrz (\begin{bmatrix}
        A & B
    \end{bmatrix}) \in \mathcal{Z}_{(n+m)n}(N)$. By Theorem \ref{theorem main}, there exist scalars $\alpha \geq 0$ and $\beta > 0$ satisfying \eqref{theorem M control LMI}. This proves the theorem.
\end{proof}

\subsection{Data-driven quadratic stabilization}
We next apply the results of Section \ref{section The general M-control problem} to data-driven quadratic stabilization. In particular, we specify $(\Theta, M) = (\Theta_{\mathrm{stab}}, M_{\mathrm{stab}})$, where $\Theta_{\mathrm{stab}}$ and $M_{\mathrm{stab}}$ are defined as in \eqref{definition Theta stab} and \eqref{definition M stab}, respectively. Since $(P,K) \in \Theta_{\mathrm{stab}}$ implies $P>0$, the $(2,2)$-block of $M_{\mathrm{stab}}(P,K)$, i.e., $-\begin{bmatrix}
    I & K^{\top}
\end{bmatrix}^{\top} P \begin{bmatrix}
    I & K^{\top}
\end{bmatrix}$, is negative semidefinite for all $(P,K) \in \Theta_{\mathrm{stab}}$. By Theorem \ref{theorem M control}, the data $\mathcal{D}$ are informative for $M_{\mathrm{stab}}$-control with respect to $\mathcal{W}_{\mathrm{F}}$ if and only if there exist a $(P,K) \in \Theta_{\mathrm{stab}}$ and scalars $\alpha \geq 0$ and $\beta > 0$ such that
\begin{equation} \label{QMI M stab Q}
    M_{\mathrm{stab}}(P,K) - \alpha Q \geq \begin{bmatrix}
            \beta I & 0\\
            0 & 0
        \end{bmatrix}.
\end{equation}
In \cite{DDCmatrixSlemma}, data informativity for $M_{\mathrm{stab}}$-control with respect to $\mathcal{W}_{\mathrm{qmi}}(\Psi)$ was studied. In that paper, this notion of informativity was called \emph{data informativity for quadratic stabilization}. We stress that in this paper, we focus on conditions for informativity with respect to the new noise model $\mathcal{W}_{\mathrm{F}}$. Our data-driven stabilization method is presented in the following theorem.
\begin{theorem} \label{theorem stab}
    Let $\Theta_{\mathrm{stab}}$ and $M_{\mathrm{stab}}\! : \! \Theta_{\mathrm{stab}}\!\rightarrow\!\mathbb{S}^{n,n+m}$ be defined as in \eqref{definition Theta stab} and \eqref{definition M stab}, respectively. Let $Q \in \mathbb{S}^{n,n+m}$ be defined as in \eqref{definition Q ddc}. The data $\mathcal{D}$ are informative for $M_{\mathrm{stab}}$-control with respect to $\mathcal{W}_{\mathrm{F}}$ if and only if there exist an $n \times n$ matrix $P > 0$, an $L \in \mathbb{R}^{m \times n}$ and a scalar $\beta>0$, such that 
    \begin{equation} \label{theorem stab QMI}
    \begin{bmatrix}
        P - \beta I & 0 & 0 & 0\\
        0 & -P & -L^{\top} & 0\\
        0 & -L & 0 & L\\
        0 & 0 & L^{\top} & P
    \end{bmatrix} - \begin{bmatrix}
        Q & 0\\
        0 & 0
    \end{bmatrix} \geq 0.
    \end{equation}
    Moreover, if $P$ and $L$ satisfy \eqref{theorem stab QMI}, then $K := L P^{-1}$ is a stabilizing feedback gain for all $(A,B) \in \Sigma_{\mathcal{D}}(\mathcal{W}_{\mathrm{F}})$. 
\end{theorem}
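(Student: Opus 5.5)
By Theorem~\ref{theorem M control}, informativity for $M_{\mathrm{stab}}$-control with respect to $\mathcal{W}_{\mathrm{F}}$ is equivalent to the existence of $(P,K)\in\Theta_{\mathrm{stab}}$, $\alpha\geq 0$ and $\beta>0$ satisfying \eqref{QMI M stab Q}. So the plan is to show that this nonlinear condition is equivalent to the LMI \eqref{theorem stab QMI}. The tools are a scaling argument that eliminates $\alpha$, the change of variables $L=KP$, and a Schur complement that linearizes the term $-KPK^{\top}$.

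\textbf{Step 1: set $\alpha=1$.} First I would show that $\alpha>0$ in \eqref{QMI M stab Q}. If $\alpha=0$, the $(1,1)$ block gives $P\geq \beta I$, while the lower-right $2\times 2$ block gives
$$
-\begin{bmatrix} I & K^{\top}\end{bmatrix}^{\top} P \begin{bmatrix} I & K^{\top}\end{bmatrix}\geq 0 .
$$
Its $(1,1)$ entry then forces $-P\geq 0$, which contradicts $P>0$. Hence $\alpha>0$.

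Since $M_{\mathrm{stab}}$ is linear in $P$ for fixed $K$, dividing \eqref{QMI M stab Q} by $\alpha$ and replacing $(P,\beta)$ by $(P/\alpha,\beta/\alpha)$ gives the same inequality with $\alpha=1$. The conclusion is that the informativity condition holds if and only if there exist $P>0$, $K$ and $\beta>0$ with
$$
M_{\mathrm{stab}}(P,K)-Q\geq \operatorname{diag}(\beta I,0).
$$

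\textbf{Step 2: linearize.} Put $L:=KP$. Then $-PK^{\top}=-L^{\top}$ and $-KPK^{\top}=-LP^{-1}L^{\top}$, so the condition from Step 1 reads
$$
\begin{bmatrix} P-\beta I & 0 & 0\\ 0 & -P & -L^{\top}\\ 0 & -L & 0\end{bmatrix}-Q-\begin{bmatrix}0\\0\\L\end{bmatrix}P^{-1}\begin{bmatrix}0 & 0 & L^{\top}\end{bmatrix}\geq 0 .
$$
Because $P>0$, the Schur complement lemma says this holds if and only if the bordered $4\times 4$ block matrix in \eqref{theorem stab QMI} is positive semidefinite. The bordering uses last block column $(0,0,L,P)$, which matches the displayed matrix.

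Conversely, given $P>0$, $L$ and $\beta>0$ satisfying \eqref{theorem stab QMI}, I would set $K=LP^{-1}$ and run the same Schur complement step backwards. This recovers \eqref{QMI M stab Q} with $\alpha=1$, and Theorem~\ref{theorem M control} then gives informativity.

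\textbf{Step 3: stabilizing gain.} For the final claim, take any $(A,B)\in\Sigma_{\mathcal{D}}(\mathcal{W}_{\mathrm{F}})$. Informativity gives \eqref{definition 1 M theta A B} with $M=M_{\mathrm{stab}}(P,K)$, and expanding it yields
$$
P-(A+BK)P(A+BK)^{\top}>0 .
$$
This is a Lyapunov inequality for $(A+BK)^{\top}$, so $A+BK$ is Schur.

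The only delicate point is in Step 1: excluding $\alpha=0$ so that the scaling is legitimate. Everything else is a routine Schur complement computation with careful bookkeeping of the block positions.
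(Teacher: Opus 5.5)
Your proof is correct and follows the paper's argument. You reduce via Theorem~\ref{theorem M control} to \eqref{QMI M stab Q}, exclude $\alpha=0$ through the $-P$ diagonal block, rescale to $\alpha=1$, and pass between \eqref{QMI M stab Q} and \eqref{theorem stab QMI} using $L=KP$ and a Schur complement with respect to the last block $P$; the paper rescales $P$, $L$, $\beta$ by $1/\alpha$ after the Schur complement rather than before, which is immaterial. In Step 3, make explicit that the \emph{particular} pair $(P,LP^{-1})$ satisfying \eqref{QMI M stab Q} satisfies \eqref{definition 1 M theta A B} for every compatible $(A,B)$, as shown via Lemma~\ref{lemma general inclusion} in the ``if'' part of the proof of Theorem~\ref{theorem M control}. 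Informativity by itself only guarantees that some $\theta$ works.
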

\begin{proof}
    By Theorem \ref{theorem M control}, it suffices to show that \eqref{QMI M stab Q} is feasible if and only if \eqref{theorem stab QMI} is feasible. To this end, we first prove the ``if" part. Suppose that there exist $P$, $L$ and $\beta$ satisfying \eqref{theorem stab QMI}. Define $K := LP^{-1}$. Then, $(P,K) \in \Theta_{\mathrm{stab}}$. By computing the Schur complement of \eqref{theorem stab QMI} with respect to its fourth diagonal block, we obtain \eqref{QMI M stab Q} with $\alpha = 1$. Moreover, $K = L P^{-1}$ is a stabilizing feedback gain for all $(A,B) \in \Sigma_{\mathcal{D}}(\mathcal{W}_{\mathrm{F}})$.

    We next prove the ``only if" part. Suppose that there exist $(P,K) \in \Theta_{\mathrm{stab}}$ and scalars $\alpha \geq 0$ and $\beta > 0$ such that \eqref{QMI M stab Q} holds. Define $L:=KP$. Using a Schur complement argument, we have that
    $$
    \begin{bmatrix}
        P - \beta I & 0 & 0 & 0\\
        0 & -P & -L^{\top} & 0\\
        0 & -L & 0 & L\\
        0 & 0 & L^{\top} & P
    \end{bmatrix} - \alpha \begin{bmatrix}
        Q & 0\\
        0 & 0
    \end{bmatrix} \geq 0.
    $$
    Note that the second diagonal block is $-P - \alpha X_- \phi X_-^{\top}$. Since $\Phi \in \mathbf{\Pi}_{1,nT}$, it follows that $\phi \leq 0$. Therefore, $P>0$ implies $\alpha >0$. By scaling $P$, $L$ and $\beta$ by $\frac{1}{\alpha}$, we conclude that \eqref{theorem stab QMI} is feasible. This proves the theorem.
\end{proof}

\subsection{Inclusion of $\mathcal{H}_2$ and $\mathcal{H}_{\infty}$ performance specifications} \label{section Inclusion of performance specifications}
In the following, we investigate data-driven $\mathcal{H}_2$ and $\mathcal{H}_{\infty}$ control problems. To this end, we associate with the unknown system \eqref{eq real system} a performance output
\begin{equation} \label{eq output performance}
    y(t) = Cx(t)+Du(t),
\end{equation}
where $y \in \mathbb{R}^p$, and $C \in \mathbb{R}^{p \times n}$ and $D\in \mathbb{R}^{p \times m}$ are given matrices that specify the performance. We denote by 
$$
G(z):=(C+DK)(zI - (A+BK))^{-1},
$$
the transfer matrix from $w$ to $y$ of the closed-loop system obtained by interconnecting any $(A,B) \in \Sigma_{\mathcal{D}}(\mathcal{W}_{\mathrm{F}})$ with the controller $u = Kx$. 

We first study the $\mathcal{H}_2$ control problem and denote the $\mathcal{H}_2$ norm of $G(z)$ by $\|G(z)\|_{\mathcal{H}_2}$. Let $\gamma > 0$. By \cite[Prop.~8.1]{Henkbook}, it follows that $A+BK$ is stable and $\|G(z)\|_{\mathcal{H}_2} < \gamma$ if and only if there exists a symmetric matrix $P>0$ such that
\begin{subequations}
\begin{align}
P &> (A +\! BK)^{\!\top} P (A +\! BK) + (C +\! DK)^{\!\top}(C +\! DK) \label{H2 a}\\
\!\trace(P) &< \gamma^2 \label{H2 b}
\end{align}
\end{subequations}
Following \cite{DDCmatrixSlemma}, we let
\begin{equation} \label{definition Y L CYL}
    Y := P^{-1},\ L := KY \text{ and } C_{Y,L} := C Y + DL.
\end{equation}
Then, $P > 0$ and \eqref{H2 a} are equivalent to 
\begin{equation} \label{Y CYL}
    Y - C_{Y,L}^{\top}C_{Y,L} >0,
\end{equation}
and
\begin{equation} \label{M H2 A B}
\begin{bmatrix}
     I \\ A^{\top}\! \\ B^{\top}\!
\end{bmatrix}^{\!\!\top}\!\!\begin{bmatrix}
        Y & 0 \\
        0 & \!\!-\begin{bmatrix}
            Y\\L
        \end{bmatrix}\!\! (Y \!- C_{Y,L}^{\top}C_{Y,L})^{-1}\!\begin{bmatrix}
            Y\\L
        \end{bmatrix}^{\!\!\top}\!
    \end{bmatrix}\!\!\!\begin{bmatrix}
     I \\ A^{\top}\! \\ B^{\top}\!
\end{bmatrix}\! >0.
\end{equation}
Define
\begin{equation} \label{definition Theta H2}
\begin{aligned}
    \Theta_{\mathcal{H}_{2}}\!\!:=\! \{(Y,L)\!\in \mathbb{S}^{n} \!\times \!\mathbb{R}^{n \times m} \mid \trace(Y^{-1}) \! < \! \gamma^2 \text{ and } \eqref{Y CYL} \text{ holds}\}\!,
\end{aligned}
\end{equation}
and 
\begin{equation} \label{definition M H2}
    M_{\mathcal{H}_2}(Y,L) \!:=\! \begin{bmatrix}
        Y & 0 \\
        0 & -\begin{bmatrix}
            Y\\L
        \end{bmatrix}\! (Y \!- C_{Y,L}^{\top}C_{Y,L})^{-1}\begin{bmatrix}
            Y\\L
        \end{bmatrix}^{\!\top}
    \end{bmatrix}\!.
\end{equation}
Since $(Y,L) \in \Theta_{\mathcal{H}_2}$ implies \eqref{Y CYL}, the $(2,2)$-block of $M_{\mathcal{H}_2}(Y,L)$ is negative semidefinite for all $(Y,L) \in \Theta_{\mathcal{H}_2}$. By applying Theorem \ref{theorem M control} with $(\Theta,M) = (\Theta_{\mathcal{H}_{2}}, M_{\mathcal{H}_2})$, the data $\mathcal{D}$ are informative for $M_{\mathcal{H}_2}$-control with respect to $\mathcal{W}_{\mathrm{F}}$ if and only if there exist a $(Y,L) \in \Theta_{\mathcal{H}_{2}}$ and scalars $\alpha \geq 0 $ and $\beta >0$ satisfying 
\begin{equation} \label{theorem H2 M Theta}
    M_{\mathcal{H}_2}(Y,L) - \alpha Q \geq \begin{bmatrix}
        \beta I & 0\\
        0 & 0
    \end{bmatrix}.
\end{equation}
Note that data informativity for $M_{\mathcal{H}_2}$-control with respect to $\mathcal{W}_{\mathrm{qmi}}(\Psi)$ was studied in \cite{DDCmatrixSlemma}, where this notion was called data informativity for $\mathcal{H}_2$ control with performance $\gamma$. In this paper, we focus on the new noise model $\mathcal{W}_{\mathrm{F}}$. Our data-driven $\mathcal{H}_2$ control method is summarized as follows.

\begin{theorem} \label{theorem H2}
    Let $\Theta_{\mathcal{H}_2}$ and $M_{\mathcal{H}_2}: \Theta_{\mathcal{H}_2} \rightarrow \mathbb{S}^{n,n+m}$ be defined as in \eqref{definition Theta H2} and \eqref{definition M H2}, respectively. Let $Q \in \mathbb{S}^{n,n+m}$ be defined as in \eqref{definition Q ddc}. The data $\mathcal{D}$ are informative for $M_{\mathcal{H}_2}$-control with respect to $\mathcal{W}_{\mathrm{F}}$ if and only if there exist matrices $Y, Z \in \mathbb{S}^n$ and $L \in \mathbb{R}^{m \times n}$ and scalars $\alpha \geq 0$ and $\beta > 0$ such that 
    \begin{equation} \label{theorem H2 QMI}
        \begin{bmatrix} 
            Y - \beta I & 0 & 0 & 0 & 0 \\
            0 & 0 & 0 & Y & 0\\
            0 & 0 & 0 & L & 0\\
            0 & Y & L^{\top} & Y & C_{Y,L}^{\top}\\
            0 & 0 & 0 & C_{Y,L} & I\\
        \end{bmatrix} - \alpha \begin{bmatrix}
            Q & 0 \\
            0 & 0
        \end{bmatrix} \geq 0,
    \end{equation}
    \begin{equation} \label{theorem H2 others}
        \begin{bmatrix}
            Y & C_{Y,L}^{\top}\\
            C_{Y,L} & I
        \end{bmatrix} > 0,\ 
        \begin{bmatrix}
            Z & I\\
            I & Y
        \end{bmatrix}\geq 0, \ 
        \trace(Z) < \gamma^2.
    \end{equation}
    Moreover, if $Y$ and $L$ satisfy
    \eqref{theorem H2 QMI} and \eqref{theorem H2 others}, then $K:= LY^{-1}$ is such that $A+BK$ is stable and $\|G(z)\|_{\mathcal{H}_2} < \gamma$ for all $(A,B) \in \Sigma_{\mathcal{D}}(\mathcal{W}_{\mathrm{F}})$.
\end{theorem}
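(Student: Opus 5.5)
By the discussion preceding the theorem, informativity for $M_{\mathcal{H}_2}$-control with respect to $\mathcal{W}_{\mathrm{F}}$ is equivalent to the existence of $(Y,L)\in\Theta_{\mathcal{H}_2}$, $\alpha\ge 0$ and $\beta>0$ satisfying \eqref{theorem H2 M Theta}. This follows from Theorem~\ref{theorem M control}, which applies because $M_{\mathcal{H}_2,22}(Y,L)\le 0$ on $\Theta_{\mathcal{H}_2}$. It therefore suffices to show two equivalences: \eqref{theorem H2 M Theta} is equivalent to \eqref{theorem H2 QMI}, and membership in $\Theta_{\mathcal{H}_2}$ is equivalent to \eqref{theorem H2 others} for some $Z$. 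The final claim about $K=LY^{-1}$ then follows from \cite[Prop.~8.1]{Henkbook} via the change of variables \eqref{definition Y L CYL}, using $P=Y^{-1}$.

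\emph{The set constraints.} By a Schur complement with respect to the identity block, the first inequality in \eqref{theorem H2 others} is equivalent to $Y-C_{Y,L}^\top C_{Y,L}>0$, i.e.\ \eqref{Y CYL}. In particular it forces $Y>0$. Given $Y>0$, the second inequality is equivalent to $Z\ge Y^{-1}$. Hence the existence of $Z$ with $Z\ge Y^{-1}$ and $\trace(Z)<\gamma^2$ is equivalent to $\trace(Y^{-1})<\gamma^2$: for one direction take $Z=Y^{-1}$, and for the other use monotonicity of the trace. Thus \eqref{theorem H2 others} holds for some $Z$ if and only if $(Y,L)\in\Theta_{\mathcal{H}_2}$.

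\emph{The main LMI.} Partition the big matrix in \eqref{theorem H2 QMI} so that its first three block rows correspond to $M_{\mathcal{H}_2}-\alpha Q$ (after the Schur step), and let the last two block rows form
$$
S:=\begin{bmatrix} Y & C_{Y,L}^\top\\ C_{Y,L} & I\end{bmatrix},\qquad
R:=\begin{bmatrix} 0&0\\ Y&0\\ L&0\end{bmatrix}
$$
for the coupling. Under \eqref{theorem H2 others} we have $S>0$, so \eqref{theorem H2 QMI} holds if and only if
$$
\begin{bmatrix} Y-\beta I&0\\0&0\end{bmatrix}-\alpha Q-RS^{-1}R^\top\ge 0 .
$$
We then compute $RS^{-1}R^\top$. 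It is the $(1,1)$-block of $S^{-1}$, which equals $(Y-C_{Y,L}^\top C_{Y,L})^{-1}$, sandwiched between $\begin{bmatrix}0&Y^\top&L^\top\end{bmatrix}^\top$ and its transpose. This is exactly the negative of the $(2,2)$-block of $M_{\mathcal{H}_2}(Y,L)$, so the reduced inequality is precisely \eqref{theorem H2 M Theta}. Both directions follow because the Schur complement step is an equivalence once $S>0$, and $S>0$ is guaranteed by \eqref{theorem H2 others} in one direction and by \eqref{Y CYL} in the other.

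\emph{Main obstacle.} The only delicate point is verifying that the Schur complement identity is block-wise correct. That requires computing the $(1,1)$-block of $S^{-1}$ via the standard inverse formula. Unlike in Theorem~\ref{theorem stab}, no scaling of $\alpha$ is needed, since $\alpha$ remains a free decision variable in \eqref{theorem H2 QMI}.
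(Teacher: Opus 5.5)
Your proposal is correct and follows essentially the same route as the paper. You reduce via Theorem~\ref{theorem M control} to feasibility of \eqref{theorem H2 M Theta} over $\Theta_{\mathcal{H}_2}$, handle the set constraints by Schur complements and trace monotonicity with $Z=Y^{-1}$, and pass between \eqref{theorem H2 M Theta} and \eqref{theorem H2 QMI} by Schur complements; the only cosmetic difference is that you eliminate the combined block $\begin{bmatrix} Y & C_{Y,L}^{\top}\\ C_{Y,L} & I\end{bmatrix}$ in one step, whereas the paper eliminates the fifth and then the fourth diagonal block sequentially, which is equivalent.
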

\begin{proof}
    Following the proof of Theorem \ref{theorem stab}, we show that \eqref{theorem H2 M Theta} is feasible if and only if \eqref{theorem H2 QMI} and \eqref{theorem H2 others} are feasible. To this end, we first prove the ``if" part. Suppose that there exist $Y$, $Z$, $L$, $\alpha$ and $\beta$ satisfying \eqref{theorem H2 QMI} and \eqref{theorem H2 others}. Using a Schur complement argument, the first inequality in \eqref{theorem H2 others} implies $Y > 0$ and \eqref{Y CYL}. Similarly, the second inequality implies $Z \geq Y^{-1}$. Together with the third inequality, it follows that $\trace(Y^{-1}) \leq \trace(Z) < \gamma^2$. Therefore, $(Y,L) \in \Theta_{\mathcal{H}_{2}}$. Next, we compute the Schur complement of \eqref{theorem H2 QMI} with respect to its fifth diagonal block and then compute the Schur complement of the resulting matrix inequality with respect to its fourth diagonal block. This yields that \eqref{theorem H2 M Theta} is feasible. Let $P := Y^{-1}$ and $K:= LY^{-1}$. It follows that \eqref{H2 a} holds for all $(A,B) \in \Sigma_{\mathcal{D}}(\mathcal{W}_{\mathrm{F}})$ and \eqref{H2 b} is satisfied. We then conclude that with $K = LY^{-1}$, $A+BK$ is stable and $\|G(z)\|_{\mathcal{H}_2} < \gamma$ for all $(A,B) \in \Sigma_{\mathcal{D}}(\mathcal{W}_{\mathrm{F}})$. 

    To prove the ``only if" part, we suppose that there exist $Y$, $L$, $\alpha$ and $\beta$ satisfying \eqref{theorem H2 M Theta}. Let $Z := Y^{-1}$. It follows from $(Y,L) \in \Theta_{\mathcal{H}_{2}}$ that the inequalities in \eqref{theorem H2 others} are satisfied. Using a Schur complement argument, \eqref{theorem H2 M Theta} implies \eqref{theorem H2 QMI}. This proves the theorem.
\end{proof}

We next study the $\mathcal{H}_{\infty}$ control problem and denote the $\mathcal{H}_{\infty}$ norm of $G(z)$ by $\|G(z)\|_{\mathcal{H}_{\infty}}$. Let $\gamma > 0$. By adapting \cite[Lem.~8.12]{Henkbook}, the matrix $A+BK$ is stable and $\|G(z)\|_{\mathcal{H}_{\infty}} < \gamma$ if and only if there exists a symmetric matrix $P>0$ such that
\begin{equation} \label{H infty}
\begin{bmatrix}
    P - (C +\! DK)^{\!\top}(C +\! DK) & (A +\! BK)^{\!\top}\\
    (A +\! BK) & P^{-1}- \frac{1}{\gamma^2} I
\end{bmatrix} > 0. 
\end{equation}
Similar to the discussion on data-driven $\mathcal{H}_2$ control, define $Y$, $L$ and $C_{Y,L}$ as in \eqref{definition Y L CYL}. Then, $P > 0$ and \eqref{H infty} are equivalent to \eqref{Y CYL} and
$$
\begin{bmatrix}
 I \\ A^{\!\top} \\ B^{\!\top}
\end{bmatrix}^{\!\!\top}\!\!\begin{bmatrix}
        Y \!-\! \frac{1}{\gamma^2}I & 0 \\
        0 & \!\!-\!\begin{bmatrix}
            Y\\L
        \end{bmatrix}\!\! (Y \!- C_{Y,L}^{\top}C_{Y,L})^{-1}\!\begin{bmatrix}
            Y\\L
        \end{bmatrix}^{\!\!\top}
    \end{bmatrix}\!\!\!\begin{bmatrix}
 I \\ A^{\!\top} \\ B^{\!\top}
\end{bmatrix} \!>0.
$$
Define
\begin{equation} \label{definition Theta Hinf}
    \Theta_{\mathcal{H}_{\infty}}\! :=\! \{(Y,L) \in \mathbb{S}^{n} \times \mathbb{R}^{n \times m} \mid \eqref{Y CYL} \text{ holds}\},
\end{equation}
and 
\begin{equation} \label{definition M Hinf}
    M_{\mathcal{H}_{\infty}}(Y,L)\! := \! \! \begin{bmatrix}
        Y \!-\! \frac{1}{\gamma^2}I & 0 \\
        0 & \!\!-\!\begin{bmatrix}
            Y\\L
        \end{bmatrix}\! (Y \!- \!C_{Y,L}^{\top}C_{Y,L})^{-1}\!\begin{bmatrix}
            Y\\L
        \end{bmatrix}^{\!\!\top}\!
    \end{bmatrix}\!\!.
\end{equation} 
Since $(Y,L) \in \Theta_{\mathcal{H}_{\infty}}$ implies \eqref{Y CYL}, the $(2,2)$ block of $M_{\mathcal{H}_{\infty}}(Y,L)$ is negative semidefinite for all $(Y,L) \in \Theta_{\mathcal{H}_{\infty}}$. Theorem \ref{theorem M control} can then be applied with $(\Theta,M) = (\Theta_{\mathcal{H}_{\infty}}, M_{\mathcal{H}_{\infty}})$. In \cite{DDCmatrixSlemma}, data informativity for $M_{\mathcal{H}_{\infty}}$-control with respect to $\mathcal{W}_{\mathrm{qmi}}(\Psi)$ was studied, where the notion was called data informativity for $\mathcal{H}_{\infty}$ control with performance~$\gamma$. In this paper, we consider the new noise model $\mathcal{W}_{\mathrm{F}}$. The following theorem presents our data-driven $\mathcal{H}_{\infty}$ control method.

\begin{theorem} \label{theorem H infty}
    Let $\Theta_{\mathcal{H}_{\infty}}$ and $M_{\mathcal{H}_{\infty}}: \Theta_{\mathcal{H}_{\infty}} \rightarrow \mathbb{S}^{n,n+m}$ be defined as in \eqref{definition Theta Hinf} and \eqref{definition M Hinf}, respectively. Let $Q \in \mathbb{S}^{n,n+m}$ be defined as in \eqref{definition Q ddc}. The data $\mathcal{D}$ are informative for $M_{\mathcal{H}_{\infty}}$-control with respect to $\mathcal{W}_{\mathrm{F}}$ if and only if there exist matrices $Y, Z \in \mathbb{S}^n$ and $L \in \mathbb{R}^{m \times n}$ and scalars $\alpha \geq 0$ and $\beta > 0$ such that
    \begin{equation} \label{H infty QMI}
        \begin{bmatrix} 
            Y \!-\! \frac{1}{\gamma^2}I \!- \!\beta I & 0 & 0 & 0 & 0 \\
            0 & 0 & 0 & Y & 0\\
            0 & 0 & 0 & L & 0\\
            0 & Y & L^{\!\top} & Y & C_{Y,L}^{\top}\\
            0 & 0 & 0 &\! C_{Y,L} \! & I\\
        \end{bmatrix} - \alpha \begin{bmatrix}
            Q & 0 \\
            0 & 0
        \end{bmatrix} \geq 0,
    \end{equation}
    \begin{equation} \label{H infty others}
        \begin{bmatrix}
            Y & C_{Y,L}^{\top}\\
            C_{Y,L} & I
        \end{bmatrix} > 0.
    \end{equation}
    Moreover, if $Y$ and $L$ satisfy
    \eqref{H infty QMI} and \eqref{H infty others}, then $K\!:= \! LY^{-1}$ is such that $A\!+\!BK$ is stable and $\|G(z)\|_{\mathcal{H}_{\infty}} \!< \!\gamma$ for all $(A,B) \!\in \!\Sigma_{\mathcal{D}}(\mathcal{W}_{\mathrm{F}})$.
\end{theorem}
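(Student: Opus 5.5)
The proof will mirror that of Theorem~\ref{theorem H2}. By Theorem~\ref{theorem M control} with $(\Theta,M)=(\Theta_{\mathcal{H}_\infty},M_{\mathcal{H}_\infty})$, informativity for $M_{\mathcal{H}_\infty}$-control with respect to $\mathcal{W}_{\mathrm{F}}$ holds if and only if there exist $(Y,L)\in\Theta_{\mathcal{H}_\infty}$, $\alpha\geq 0$ and $\beta>0$ such that
\begin{equation*}
M_{\mathcal{H}_\infty}(Y,L)-\alpha Q\geq \begin{bmatrix}\beta I & 0\\ 0 & 0\end{bmatrix}.
\end{equation*}
It therefore suffices to show that this inequality, together with $(Y,L)\in\Theta_{\mathcal{H}_\infty}$, is feasible if and only if \eqref{H infty QMI} and \eqref{H infty others} are feasible. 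Here $Z$ plays no role and can be chosen arbitrarily.

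For the ``if'' direction, I start from \eqref{H infty others}. A Schur complement with respect to the identity block gives $Y-C_{Y,L}^\top C_{Y,L}>0$, i.e.\ \eqref{Y CYL}, and in particular $Y>0$. Hence $(Y,L)\in\Theta_{\mathcal{H}_\infty}$.

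Next I take two successive Schur complements in \eqref{H infty QMI}. The fifth diagonal block is $I>0$; eliminating it replaces the $(4,4)$ block by $Y-C_{Y,L}^\top C_{Y,L}$, which is positive definite. Eliminating this fourth block then produces the term
\begin{equation*}
-\begin{bmatrix}Y\\L\end{bmatrix}(Y-C_{Y,L}^\top C_{Y,L})^{-1}\begin{bmatrix}Y\\L\end{bmatrix}^\top
\end{equation*}
in the $(2,3)$ block rows and columns. The result is exactly $M_{\mathcal{H}_\infty}(Y,L)-\alpha Q\geq \mathrm{diag}(\beta I,0)$, with the $Y-\frac{1}{\gamma^2}I$ term sitting in the $(1,1)$ block.

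For the ``only if'' direction, I reverse these steps. Since $(Y,L)\in\Theta_{\mathcal{H}_\infty}$ gives \eqref{Y CYL}, and hence $Y>0$, condition \eqref{H infty others} holds by a Schur complement. The same two Schur complements, read backwards with both pivots positive definite, turn the $M$-inequality into \eqref{H infty QMI}.

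For the closed-loop claim, set $P:=Y^{-1}$ and $K:=LY^{-1}$. For every $(A,B)\in\Sigma_{\mathcal{D}}(\mathcal{W}_{\mathrm{F}})$, Theorem~\ref{theorem M control} together with Lemma~\ref{lemma general inclusion} gives the displayed strict quadratic inequality in $(A,B)$ preceding \eqref{definition Theta Hinf}. That inequality, combined with \eqref{Y CYL}, is equivalent to $P>0$ and \eqref{H infty}, and so yields stability of $A+BK$ and $\|G(z)\|_{\mathcal{H}_\infty}<\gamma$.

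The main obstacle is checking the Schur complement bookkeeping: the zero blocks in positions $(2,2)$, $(2,3)$ and $(3,3)$ of \eqref{H infty QMI} must combine with the $-\alpha Q$ term so that the result is exactly $M_{\mathcal{H}_\infty}$. This relies on the pivots $I$ and $Y-C_{Y,L}^\top C_{Y,L}$ being positive definite, which \eqref{H infty others} supplies. Unlike in Theorem~\ref{theorem stab}, no rescaling by $\alpha$ is needed, since $\alpha$ is kept as a decision variable.
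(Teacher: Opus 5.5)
Your proposal is correct and matches the paper's intended route. The paper omits this proof, saying only that it follows by adapting the proof of Theorem~\ref{theorem H2}, and that is what you do: you reduce via Theorem~\ref{theorem M control}, then pass between $M_{\mathcal{H}_\infty}(Y,L)-\alpha Q\geq \mathrm{diag}(\beta I,0)$ and \eqref{H infty QMI} using two Schur complements with the positive definite pivots $I$ and $Y-C_{Y,L}^\top C_{Y,L}$; your remarks that $Z$ plays no role here and that no rescaling by $\alpha$ is needed are also accurate.
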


The proof of Theorem~\ref{theorem H infty} follows by adapting the proof of Theorem~\ref{theorem H2} and is therefore omitted here.

\subsection{Discussion on the instantaneous noise bound} \label{section Discussion on the instantaneous noise bound}
In this section, we consider the case that noise is instantaneously bounded, i.e., $W_- \in \mathcal{W}_{\mathrm{ib}}(\epsilon)$, and investigate the advantages of the proposed design method over the one based on $\mathcal{W}_{\mathrm{qmi}}(\Psi_{\epsilon})$. By Lemma \ref{lemma W set inclusion}, the inclusion \eqref{W inclusion}, i.e.,
$$
\mathcal{W}_{\mathrm{ib}}(\epsilon) \subseteq \mathcal{W}_{\mathrm{F}}(\Phi_{\epsilon}) \subseteq \mathcal{W}_{\mathrm{qmi}}(\Psi_{\epsilon}),
$$
holds, and therefore the set $\mathcal{W}_{\mathrm{F}}(\Phi_{\epsilon})$ is a less conservative overapproximation of $\mathcal{W}_{\mathrm{ib}}(\epsilon)$ than $\mathcal{W}_{\mathrm{qmi}}(\Psi_{\epsilon})$. 

We next demonstrate how this reduced conservatism translates into improved control design conditions. It is shown in Theorem~\ref{theorem M control} that the $\mathcal{W}_{\mathrm{F}}(\Phi_{\epsilon})$-based approach requires feasibility of \eqref{theorem M control LMI}, whereas statement \ref{proposition statement a QMI set} in Proposition \ref{proposition preliminaries} shows that $\mathcal{W}_{\mathrm{qmi}}(\Psi_{\epsilon})$ yields conditions based on \eqref{proposition statement a QMI set LMI}. We observe that \eqref{theorem M control LMI} has a form similar to \eqref{proposition statement a QMI set LMI}, thereby retaining favorable numerical properties. Moreover, $\mathcal{W}_{\mathrm{F}}(\Phi_{\epsilon})$-based approach can be less conservative. Specifically, if there exist $\theta$, $\alpha$ and $\beta$ satisfying \eqref{proposition statement a QMI set LMI}, then \eqref{theorem M control LMI} also holds with the same $\theta$, $\alpha$ and $\beta$. To see this, suppose that there exist a $\theta \in \Theta$ and scalars $\alpha \geq 0$ and $\beta>0$ satisfying \eqref{proposition statement a QMI set LMI} with $\Psi = \Psi_{\epsilon}$. It can be verified that the matrix $Q$ in \eqref{theorem M control LMI} satisfies 
$$
Q_{12} = Q_{21}^{\top} = X_+ \begin{bmatrix}
    X_- \\
    U_-
\end{bmatrix}^{\top}\text{ and }Q_{22} = -\begin{bmatrix}
    X_- \\
    U_-
\end{bmatrix} \begin{bmatrix}
    X_- \\
    U_-
\end{bmatrix}^{\top}.
$$ 
Define
$$
\Delta := X_+\left(I - \begin{bmatrix}
    X_-\\U_-
\end{bmatrix}^{\top}\left(\begin{bmatrix}
    X_-\\U_-
\end{bmatrix}\begin{bmatrix}
    X_-\\U_-
\end{bmatrix}^{\top}\right)^{\dagger}\begin{bmatrix}
    X_-\\U_-
\end{bmatrix}\right )X_+^{\top}.
$$
Then, 
$$
\begin{bmatrix}
    I \!&\! X_+ \\
    0 \!&\! -X_- \\
    0 \!&\! -U_-
\end{bmatrix}\Psi_{\epsilon} \begin{bmatrix}
    I \!&\! X_+ \\
    0 \!&\! -X_- \\
    0 \!&\! -U_-
\end{bmatrix}^{\!\!\top} = Q + \begin{bmatrix}
    \trace (\Delta)I - \Delta & 0 \\
    0 & 0
\end{bmatrix}\!,
$$
Since $\Delta \geq 0$, we obtain $\trace (\Delta)I - \Delta \geq 0$. This implies that \eqref{theorem M control LMI} holds. Therefore, feasibility of \eqref{proposition statement a QMI set LMI} with $\Psi = \Psi_{\epsilon}$ implies feasibility of \eqref{theorem M control LMI}. We conclude that, for instantaneously bounded noise, the $\mathcal{W}_{\mathrm{F}}(\Phi_{\epsilon})$-based approach is typically less conservative than the $\mathcal{W}_{\mathrm{qmi}}(\Psi_{\epsilon})$-based approach. In Sections \ref{section A numerical example on quadratic stabilization} and \ref{section a pendulum cart system}, we further illustrate this with numerical examples. 

\section{Data-driven analysis} \label{section data driven analysis}
In this section, we extend the proposed control design framework to data-driven analysis problems. In particular, we study quadratic stabilizability in Section \ref{section Quadratic stabilizability} and then dissipativity analysis in Section \ref{section Dissipativity analysis}.

\subsection{Quadratic stabilizability} \label{section Quadratic stabilizability}
In the following, we aim to derive conditions under which all systems in $\Sigma_{\mathcal{D}}(\mathcal{W}_{\mathrm{F}})$ are quadratic stabilizable. By adapting \cite[Def.~3.13~(d)]{Henkbook}, we introduce the following definition of informativity.
\begin{definition} \label{definition quadratic stabilizability}
    The data $\mathcal{D}$ are called \textit{informative for quadratic stabilizability} if there exists a real matrix $P>0$ such that 
    \begin{equation} \label{definition quadratic stabilizability QMI}
        \begin{bmatrix} I \\ A^\top \\ B^\top \end{bmatrix}^\top \begin{bmatrix}
            P & 0 & 0\\
            0 & -P & 0\\
            0 & 0 & I
        \end{bmatrix}
        \begin{bmatrix} I \\ A^\top \\ B^\top \end{bmatrix} > 0,
    \end{equation}
    for all $(A,B) \in \Sigma_{\mathcal{D}}(\mathcal{W}_{\mathrm{F}})$.
\end{definition}

Definition \ref{definition quadratic stabilizability} requires a common $P$ for all systems in $\Sigma_{\mathcal{D}}(\mathcal{W}_{\mathrm{F}})$. Note that these systems do not necessarily share a common stabilizing gain $K$. Therefore, this notion is in general weaker than informativity for $M_{\mathrm{stab}}$-control with respect to $\mathcal{W}_{\mathrm{F}}$. The goal of this section is to find conditions under which the data $\mathcal{D}$ are informative for quadratic stabilizability.

If $P$ is fixed, then \eqref{definition quadratic stabilizability QMI} is a strict QMI in $\begin{bmatrix}
    A & B
\end{bmatrix}$. Following the steps of control design, we apply Theorem \ref{theorem main} to derive conditions. This leads to the following theorem.

\begin{theorem} \label{theorem quadratic stabilizability}
    Let $Q \in \mathbb{S}^{n,n+m}$ be defined as in \eqref{definition Q ddc}. Assume that $\phi < 0$ and $\begin{bmatrix}
    X_-\\U_-
\end{bmatrix}$ has full row rank. The data $\mathcal{D}$ are informative for quadratic stabilizability if and only if exists a matrix $P > 0$ and scalars $\alpha \geq 0$ and $\beta>0$ such that 
    \begin{equation} \label{theorem quadratic stabilizability LMI}
        \begin{bmatrix}
            P - \beta I & 0 & 0\\
            0 & -P & 0\\
            0 & 0 & I
        \end{bmatrix} - \alpha Q \geq 0.
    \end{equation}
\end{theorem}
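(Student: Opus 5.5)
The plan is to fix $P>0$, recast the quadratic stabilizability condition \eqref{definition quadratic stabilizability QMI} as an implication of the form \eqref{implication N to M}, and then apply the second statement of Theorem~\ref{theorem main}. The only twist compared with Theorem~\ref{theorem M control} is that the relevant $M$ does not have a negative semidefinite $(2,2)$-block. We therefore have to use the alternative hypothesis $N_{22}<0$ of Theorem~\ref{theorem main}, and this is exactly where the assumptions $\phi<0$ and full row rank of $\begin{bmatrix} X_-\\ U_-\end{bmatrix}$ come in.

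First, for a fixed symmetric $P>0$, define
$$
M_P:=\begin{bmatrix} P & 0 & 0\\ 0 & -P & 0\\ 0 & 0 & I\end{bmatrix}\in\mathbb{S}^{n,n+m}.
$$
Put $Z:=\begin{bmatrix} A & B\end{bmatrix}^\top\in\mathbb{R}^{(n+m)\times n}$, so that $\vecrz(Z^\top)=\vecrz(\begin{bmatrix} A & B\end{bmatrix})$. By \eqref{vec A B}, $(A,B)\in\Sigma_{\mathcal{D}}(\mathcal{W}_{\mathrm{F}})$ if and only if $\vecrz(Z^\top)\in\mathcal{Z}_{(n+m)n}(N)$. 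Moreover, \eqref{definition quadratic stabilizability QMI} holds if and only if $Z\in\mathcal{Z}^+_{n+m}(M_P)$. Hence the data are informative for quadratic stabilizability if and only if there is a $P>0$ for which the implication \eqref{implication N to M} holds with $M=M_P$, $p=n$ and $q=n+m$.

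Second, we verify the hypotheses of Theorem~\ref{theorem main}. By Lemma~\ref{lemma N in Pi}, $N\in\mathbf{\Pi}_{1,n(n+m)}$. By the Assumption, $N_{22}=Q_{22}\otimes I_n$ with $Q_{22}$ given by \eqref{definition Theta 22}, and $Q$ is then the matrix \eqref{definition Q ddc}. The block $(M_P)_{22}=\mathrm{diag}(-P,I)$ is not negative semidefinite when $m\ge 1$, so we must establish $N_{22}<0$ instead. For any nonzero $v\in\mathbb{R}^{n+m}$, full row rank gives $\begin{bmatrix} X_-\\ U_-\end{bmatrix}^\top v\neq 0$. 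Combined with $\phi<0$, this yields $v^\top Q_{22}v<0$, so $Q_{22}<0$. Since the eigenvalues of $Q_{22}\otimes I_n$ are those of $Q_{22}$, it follows that $N_{22}<0$.

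Third, we apply the second statement of Theorem~\ref{theorem main}. For each fixed $P>0$, the implication holds if and only if there exist $\alpha\ge 0$ and $\beta>0$ with
$$
M_P-\alpha Q\ge\begin{bmatrix}\beta I & 0\\ 0 & 0\end{bmatrix},
$$
which is precisely \eqref{theorem quadratic stabilizability LMI}. Taking the existential quantifier over $P>0$ on both sides gives the theorem.

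Unlike Theorem~\ref{theorem stab}, no rescaling of $\alpha$ is needed or possible here, because the identity block in $M_P$ is fixed. The statement accordingly keeps $\alpha$ as a free variable. The main point requiring care is the second step: it replaces the sign condition on $M_{22}$ with the strict negativity of $N_{22}$, and it is the only place the extra assumptions are used.
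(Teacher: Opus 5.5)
Your proposal is correct and follows the paper's proof. Both fix $P$, take $M=\mathrm{diag}(P,-P,I)$, and use $\phi<0$ together with full row rank of $\begin{bmatrix} X_-\\ U_-\end{bmatrix}$ to obtain $N_{22}<0$, which is needed because $M_{22}\not\leq 0$; both then invoke the second statement of Theorem~\ref{theorem main}. Your version is more explicit than the paper's short proof, and your choice of $M$ without a $-\beta I$ term is the cleaner one, since Theorem~\ref{theorem main} itself supplies $\beta$.
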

\vspace{1em}
\begin{proof}  
The proof of Theorem~\ref{theorem quadratic stabilizability} follows the same steps as the discussion in Section~\ref{section The general M-control problem}. We present only the differences. In this case, define $M \in \mathbb{S}^{n,n+m}$ as
$$
M:=\begin{bmatrix}
            P - \beta I & 0 & 0\\
            0 & -P & 0\\
            0 & 0 & I
        \end{bmatrix},
$$
and define $N \in \mathbb{S}^{1, n(n+m)}$ as in \eqref{definition N}. Since $\phi < 0$ and $\begin{bmatrix}
    X_-\\U_-
\end{bmatrix}$ has full row rank, we have $N_{22} <0$. The results then follows by applying Theorem~\ref{theorem main}.
\end{proof}

Next, we simplify the LMI condition \eqref{theorem quadratic stabilizability LMI} in the following corollary

\begin{corollary}
    Let $Q\in \mathbb{S}^{n,n+m}$ be defined as in \eqref{definition Q ddc}. The data $\mathcal{D}$ are informative for quadratic stabilizability if and only if exist a matrix $P > 0$ and a scalar $\beta>0$ such that 
    \begin{equation} \label{corollary quadratic stabilizability LMI}
        \begin{bmatrix}
            P - \beta I & 0\\
            0 & -P
        \end{bmatrix} - \begin{bmatrix}
            I_{2n} & 0
        \end{bmatrix}Q\begin{bmatrix}
            I_{2n} & 0
        \end{bmatrix}^{\top} \geq 0.
    \end{equation}
\end{corollary}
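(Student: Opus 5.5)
The plan is to show that the LMI \eqref{theorem quadratic stabilizability LMI} of Theorem~\ref{theorem quadratic stabilizability} is feasible if and only if \eqref{corollary quadratic stabilizability LMI} is feasible. The standing assumptions of Theorem~\ref{theorem quadratic stabilizability} are kept ($\phi<0$ and $\begin{bmatrix} X_-\\U_-\end{bmatrix}$ of full row rank). Partition $Q$ conformably with $\mathbb{R}^n\times\mathbb{R}^n\times\mathbb{R}^m$ into blocks $Q_{ij}$, $i,j=1,2,3$, and write $R(P,\beta,\alpha)$ for the leading $2n\times 2n$ block of the matrix in \eqref{theorem quadratic stabilizability LMI}, namely
$$
R(P,\beta,\alpha):=\begin{bmatrix} P-\beta I & 0\\ 0 & -P\end{bmatrix}-\alpha \begin{bmatrix} I_{2n} & 0\end{bmatrix}Q\begin{bmatrix} I_{2n} & 0\end{bmatrix}^{\top}.
$$
Two facts about $Q$ will be used. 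By \eqref{definition Theta 22}, the $(2,2)$-block $X_-\phi X_-^{\top}$ of $Q$ is nonpositive, and the $(3,3)$-block $U_-\phi U_-^{\top}$ of $Q$ is also nonpositive. Note that \eqref{corollary quadratic stabilizability LMI} is exactly $R(P,\beta,1)\geq 0$.

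\emph{Necessity.} Suppose $P>0$, $\alpha\ge 0$ and $\beta>0$ satisfy \eqref{theorem quadratic stabilizability LMI}. Taking the leading principal $2n\times 2n$ submatrix gives $R(P,\beta,\alpha)\ge 0$. Its $(2,2)$-block is $-P-\alpha X_-\phi X_-^{\top}\ge 0$. Since $P>0$, this forces $\alpha>0$. Because $R$ is jointly homogeneous in $(P,\beta,\alpha)$, dividing by $\alpha$ shows that $(P/\alpha,\beta/\alpha)$ satisfies \eqref{corollary quadratic stabilizability LMI}.

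\emph{Sufficiency.} This is the main obstacle, because the coupling blocks $Q_{13},Q_{23}$ must be absorbed and $R(P,\beta,1)$ need not be definite. I would first create strict positivity. Given $P,\beta$ with $R(P,\beta,1)\ge 0$, pick $c\in(0,1)$ such that $(1-c)P<\tfrac{\beta}{2}I$. A direct computation gives
$$
R(cP,\tfrac{\beta}{2},1)=R(P,\beta,1)+\begin{bmatrix}\tfrac{\beta}{2}I-(1-c)P & 0\\ 0&(1-c)P\end{bmatrix}.
$$
This is the sum of a positive semidefinite matrix and a positive definite one, so $R(cP,\tfrac{\beta}{2},1)>0$. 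Next use homogeneity: for $t>0$, set $P':=tcP$, $\beta':=t\beta/2$ and $\alpha:=t$. The matrix in \eqref{theorem quadratic stabilizability LMI} then has leading block $tR(cP,\tfrac{\beta}{2},1)$, off-diagonal block $-t\begin{bmatrix}Q_{13}\\Q_{23}\end{bmatrix}$, and last diagonal block $I-tQ_{33}\ge I>0$. By a Schur complement with respect to the last block, feasibility reduces to
$$
R(cP,\tfrac{\beta}{2},1)\ \ge\ t\begin{bmatrix}Q_{13}\\Q_{23}\end{bmatrix}(I-tQ_{33})^{-1}\begin{bmatrix}Q_{13}\\Q_{23}\end{bmatrix}^{\top}.
$$
This holds for all sufficiently small $t>0$, since the left-hand side is positive definite and the right-hand side is $O(t)$. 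Hence \eqref{theorem quadratic stabilizability LMI} is feasible, and Theorem~\ref{theorem quadratic stabilizability} yields informativity for quadratic stabilizability.
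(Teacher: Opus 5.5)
Your proof is correct and is essentially the paper's argument. Necessity notes that $\alpha=0$ would force $-P\geq 0$ and then rescales by $1/\alpha$; sufficiency perturbs $(P,\beta)$ to make the leading $2n\times 2n$ block strictly positive (your $c$ and $\beta/2$ are the paper's $c_1=1-c$ and $c_2=\beta/2$) and then takes a small multiplier $t$, which is the paper's $1/c_3$, with your Schur-complement estimate making explicit the step the paper states as ``there exists a sufficiently large $c_3$''.
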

\vspace{1em}
\begin{proof}
    It suffices to prove that \eqref{theorem quadratic stabilizability LMI} is feasible if and only if \eqref{corollary quadratic stabilizability LMI} is feasible. We first prove the ``only if" part and suppose that there exist $P$, $\alpha$ and $\beta$ satisfying \eqref{theorem quadratic stabilizability LMI}. Since $P>0$, it follows that $\alpha > 0$. By dividing \eqref{theorem quadratic stabilizability LMI} by $\alpha$, we have
    $$
     \begin{bmatrix}
            \frac{1}{\alpha}P -  \frac{\beta}{\alpha}I & 0\\
            0 & -\frac{1}{\alpha}P
        \end{bmatrix} - \begin{bmatrix}
            I_{2n} & 0
        \end{bmatrix}Q\begin{bmatrix}
            I_{2n} & 0
        \end{bmatrix}^{\top} \geq 0.
    $$
    This implies that \eqref{corollary quadratic stabilizability LMI} is feasible.

    Next, we prove the ``if" part and suppose that there exist $P$ and $\beta$ satisfying \eqref{corollary quadratic stabilizability LMI}. Let scalars $c_1$ and $c_2$ be such that $0 < c_1 < 1$, $c_2 > 0$ and $c_1 P + c_2 I < \beta I$. Then,
    $$
    \begin{bmatrix}
            (1-c_1)P - c_2 I & 0\\
            0 & \!\!-(1-c_1)P
        \end{bmatrix} - \begin{bmatrix}
            I_{2n} \!&\! 0
        \end{bmatrix}Q\begin{bmatrix}
            I_{2n} \!&\! 0
        \end{bmatrix}^{\!\top} \!> 0.
    $$
    Moreover, there exists a sufficiently large $c_3 >0$ such that 
    \begin{equation} \label{corollary stab in proof}
        \begin{bmatrix}
            (1-c_1)P - c_2 I & 0 & 0\\
            0 & -(1-c_1)P & 0\\
            0 & 0 & c_3 I
        \end{bmatrix} - Q > 0. 
    \end{equation}
    By dividing \eqref{corollary stab in proof} by $c_3$, we conclude that \eqref{theorem quadratic stabilizability LMI} is feasible. This proves the corollary.
\end{proof}

\subsection{Dissipativity analysis} \label{section Dissipativity analysis}
We now extend the proposed framework to data-driven dissipativity analysis. To this end, we associate with the unknown system \eqref{eq real system} an unknown performance output
\begin{equation} \label{eq output performance dissipativity}
    y(t) = C_sx(t)+D_su(t)+v(t),
\end{equation}
where $y,v \in \mathbb{R}^p$. We assume that the matrices $C_s \in \mathbb{R}^{p \times n}$ and $D_s\in \mathbb{R}^{p \times m}$ are unknown. Instead, we are given the output data, collected in the matrix
$$
Y_- := \begin{bmatrix} 
            y(0) & y(1) & \cdots & y(T-1)
	\end{bmatrix}.
$$
Together with $w(t)$ in \eqref{eq real system}, we define the unknown noise matrix
$$
	E_- := \begin{bmatrix}
            w(0) & w(1) & \cdots & w(T-1)\\
			v(0) & v(1) & \cdots & v(T-1)
	\end{bmatrix}.
$$
Extending the noise model $\mathcal{W}_{\mathrm{F}}$ to dissipativity analysis, we assume that the noise matrix $E_-$ satisfies
\begin{equation} \label{dissipativity noise bound}
    \vecrz\left(E_-\right) \in \mathcal{Z}_{(n+p)T}(\Xi),
\end{equation}
where $\Xi \in \mathbf{\Pi}_{1,(n+p)T}$ is given. Moreover, we assume that $\Xi_{22} = \xi \otimes I_{n+p}$ for some $\xi \in \mathbb{S}^T$. The system $(A,B,C,D)$ is called compatible with the data $(U_-,X,Y_-)$ if
\begin{equation} \label{dissipativity dataeq}
    \begin{bmatrix}
    X_+\\Y_-
\end{bmatrix} = \begin{bmatrix}
    A & B\\
    C & D
\end{bmatrix} \begin{bmatrix}
    X_-\\U_-
\end{bmatrix}+ E_-,
\end{equation}
holds for some $E_-$ satisfying \eqref{dissipativity noise bound}. The set of systems compatible with the data is denoted by $\Sigma_{(U_-,X,Y_-)}$, i.e.,
$$
\begin{aligned}
    \Sigma_{(U_-,X,Y_-)} := \big\{ (A,B,C,D) \;\big|\;
\text{\eqref{dissipativity dataeq} holds}& \\
 \text{for some } E_- &\text{ satisfying }\eqref{dissipativity noise bound}
\big\}\!.
\end{aligned}
$$
Clearly, $(A_s,B_s,C_s,D_s) \in \Sigma_{(U_-,X,Y_-)}$. Define
$$
\begin{aligned}
 &N := \\
    &\!\!\begin{bmatrix}
     1 & 0 \\
    \!\vecrz \! \left(\!\begin{bmatrix}
        X_+\\
        Y_-
    \end{bmatrix}\!\right)\! &\!\! -\begin{bmatrix}
    X_-\! \\ U
\end{bmatrix}^{\!\!\top}\!\! \otimes I
\end{bmatrix}^{\!\!\top}\!\Xi 
\begin{bmatrix}
     1 & 0 \\
    \!\vecrz \! \left(\!\begin{bmatrix}
        X_+\\
        Y_-
    \end{bmatrix}\!\right)\! &\!\! -\begin{bmatrix}
    X_-\! \\ U_-
\end{bmatrix}^{\!\!\top}\!\! \otimes I
\end{bmatrix}\!\!.
\end{aligned}
$$
By adapting the proof of Lemma \ref{lemma N in Pi}, $N \in \mathbf{\Pi}_{1,(n+p)(n+m)}$. A system $(A,B,C,D)$ in $\Sigma_{(U_-,X,Y_-)}$ if and only if 
\begin{equation} \label{vec ABCD in ZN}
    \vecrz\left(\begin{bmatrix}
        A & B\\
        C & D
    \end{bmatrix}\right) \in \mathcal{Z}_{(n+p)(n+m)}(N).
\end{equation}

Let $S \in \mathbb{S}^{m+p}$. A system $(A,B,C,D)$ is said to be dissipative with respect to the supply rate
\begin{equation} \label{supply rate}
    s(u,y) = \begin{bmatrix}
    u\\y
\end{bmatrix}^{\top} S \begin{bmatrix}
    u\\y
\end{bmatrix}
\end{equation}
if there exists a $P \geq 0$ such that
\begin{equation} \label{dissipativity QMI definition}
    \begin{bmatrix}
    I & 0\\
    0 & I\\
    A & B\\
    C & D
\end{bmatrix}^{\top}\begin{bmatrix}
    P & 0 & 0 & 0\\
    0 & S_{11} & 0 & S_{12}\\
    0 & 0 & -P & 0\\
    0 & S_{21} & 0 & S_{22}\\
\end{bmatrix}\begin{bmatrix}
    I & 0\\
    0 & I\\
    A & B\\
    C & D
\end{bmatrix} \geq 0.
\end{equation}
We aim to verify whether the true system $(A_s,B_s,C_s,D_s)$ is dissipative with respect to $s(u,y)$. Since we cannot distinguish the true system from any other system in $\Sigma_{(U_-,X,Y_-)}$, we need to verify dissipativity of all systems in $\Sigma_{(U_-,X,Y_-)}$. This motivates the following definition of informativity.
\begin{definition}
    The data $(U_-,X,Y_-)$ are called \emph{informative for dissipativity with respect to the supply rate \eqref{supply rate}} if there exists a $P \in \mathbb{S}^n$ with $P \geq 0$ such that \eqref{dissipativity QMI definition} holds for all $(A,B,C,D) \in \Sigma_{(U_-,X,Y_-)}$.
\end{definition}

We aim to find conditions under which the data are informative for dissipativity. To this end, we introduce the following lemma.

\begin{lemma} \label{lemma P pd}
    Assume that $N|N_{22} > 0$ and $\operatorname{In}(S) = (p,0,m)$. The data $(U_-,X,Y_-)$ are informative for dissipativity with respect to the supply rate \eqref{supply rate} if and only if there exists a $P \in \mathbb{S}^n$ with $P > 0$ such that \eqref{dissipativity QMI definition} holds for all $(A,B,C,D) \in \Sigma_{(U_-,X,Y_-)}$.
\end{lemma}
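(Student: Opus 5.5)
The ``if'' direction is immediate, since a $P>0$ satisfying \eqref{dissipativity QMI definition} for all compatible systems is in particular a $P\geq 0$. For the ``only if'' direction, I would turn informativity with a fixed $P\geq 0$ into an LMI and then use inertia and kernel arguments to show that $P$ cannot be singular.

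\emph{Step 1 (reduce to an LMI).} Fix $P\geq 0$ such that \eqref{dissipativity QMI definition} holds on $\Sigma_{(U_-,X,Y_-)}$. Let $M(P)\in\mathbb{S}^{n+m,\,n+p}$ be the middle matrix of \eqref{dissipativity QMI definition}, with its rows and columns permuted so that the block multiplying $\begin{bmatrix} A & B\\ C& D\end{bmatrix}^\top$ is the ``$Z$'' part. Then \eqref{dissipativity QMI definition} reads $Z\in\mathcal{Z}_{n+p}(M(P))$ for $Z=\begin{bmatrix} A & B\\ C& D\end{bmatrix}^\top$. By \eqref{vec ABCD in ZN}, the assumption $\Xi_{22}=\xi\otimes I$ (so $N_{22}=Q_{22}\otimes I$), and the hypothesis $N|N_{22}>0$, Theorem~\ref{theorem main nonstrict} applies. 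Hence informativity with this $P$ is equivalent to the existence of $\alpha\geq 0$ with $M(P)-\alpha Q\geq 0$, where $Q$ is built from $N$ as in \eqref{definition Theta}. Moreover, $Q|Q_{22}=(N|N_{22})I>0$.

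\emph{Step 2 ($\alpha>0$).} I would show $\alpha\neq 0$ using $\operatorname{In}(S)=(p,0,m)$. If $\alpha=0$, then $M(P)\geq 0$. Its $(2,2)$-block, i.e.\ the block acting on the $(A,C)$-rows, is $\operatorname{diag}(-P,S_{22})$, and positivity of the top-left block forces $\begin{bmatrix}S_{11}&0\\0&P\end{bmatrix}$-type restrictions. Together with $S$ being nonsingular with exactly $m$ negative eigenvalues, this yields a contradiction by evaluating $M(P)$ on a vector built from a negative eigenvector of $S$.

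\emph{Step 3 (kernel of $P$ is trivial).} Suppose $Px=0$ with $x\neq 0$. Consider the vector $v=\begin{bmatrix}\hat x\\ z\end{bmatrix}$ with $\hat x=\begin{bmatrix}x\\0\end{bmatrix}$ and $z=-Q_{22}^{\dagger}Q_{21}\hat x$. Then
\[
v^\top Q v=\hat x^\top (Q|Q_{22})\hat x=(N|N_{22})\|x\|^2>0 .
\]
Hence $v^\top M(P) v\geq \alpha(N|N_{22})\|x\|^2>0$. On the other hand, the first block contributes $x^\top P x=0$, so $v^\top M(P)v$ reduces to terms in $z$ involving $-P$ and $S_{22}$, namely $-z_1^\top P z_1+z_2^\top S_{22}z_2$.

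\emph{Step 4 (main obstacle).} The main obstacle is showing that this remainder cannot be positive, i.e.\ exploiting the freedom in the $z$-direction. My first attempt would use the nonstrict LMI at the vectors $\begin{bmatrix}t\hat x\\ z'\end{bmatrix}$ for all $z'$ and $t\in\mathbb{R}$. Since the $\hat x$-block of $M(P)$ vanishes on $x$, positive semidefiniteness of $M(P)-\alpha Q$ forces its off-diagonal column $(M(P)-\alpha Q)\begin{bmatrix}\hat x\\0\end{bmatrix}$ to be zero whenever the diagonal entry vanishes. That entry is $-\alpha\,\hat x^\top Q_{11}\hat x$, and I would aim to show it is $\leq 0$.

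I would combine this with the Schur-complement identity above. The hope is to derive $\alpha Q_{21}\hat x = M_{21}(P)\hat x$, and hence $z=-\frac1\alpha Q_{22}^\dagger M_{21}(P)\hat x$. Substituting back should then contradict the positivity obtained in Step 3, given $\operatorname{In}(S)=(p,0,m)$. If this direct route fails, I would fall back on a geometric argument. Since $N|N_{22}>0$, the compatible set has nonempty interior in the $\vecrz$ coordinates, so $(Ax,Cx)$ can be perturbed in an open set. Evaluating \eqref{dissipativity QMI definition} at $\begin{bmatrix}tx\\u\end{bmatrix}$ with $t\to\infty$ would then show that $s(u,y)\geq 0$ on a set of $(u,y)$ too large to be compatible with $m$ negative eigenvalues of $S$.
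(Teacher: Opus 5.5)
Your ``if'' direction is fine, but the main route for ``only if'' breaks at Step~1, and Steps~3--4 never close.

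\textbf{Step 1 misapplies Theorem~\ref{theorem main nonstrict}.} Inequality \eqref{dissipativity QMI definition} is a QMI in $W:=\begin{bmatrix} A & B\\ C & D\end{bmatrix}$, with identity block of size $n+m$. Theorem~\ref{theorem main nonstrict} instead needs a QMI in the variable $Z$ whose transpose is vectorized in the data constraint. By \eqref{vec ABCD in ZN}, that variable is $Z=W^\top$. Permuting rows and columns of the middle matrix cannot turn a QMI in $W$ into one in $W^\top$. Rewriting the constraint in terms of $\vecrz(W^\top)$ does not help either: it turns $N_{22}=Q_{22}\otimes I_{n+p}$ into $I_{n+p}\otimes Q_{22}$ (up to a commutation matrix). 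That has the required Kronecker form only if $Q_{22}$ is a multiple of the identity. The same problem appears as a size mismatch: the $Q$ of \eqref{definition Theta} lies in $\mathbb{S}^{n+p,n+m}$, not $\mathbb{S}^{n+m,n+p}$. The paper reaches a QMI in $W^\top$ only through the dualization in Lemma~\ref{lemma S inv}, which presupposes $P>0$. That is exactly what you are trying to prove, so this route is circular.

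\textbf{Steps 3--4 do not produce a contradiction.} Even granting $M(P)-\alpha Q\ge 0$, Step~3 only shows $-z_1^\top Pz_1+z_2^\top S_{22}z_2>0$. This is not contradictory, because $\operatorname{In}(S)=(p,0,m)$ does not force $S_{22}\le 0$. Step~4 states what you hope to show rather than an argument.

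\textbf{The fallback is the paper's route, but the key step is missing.} The paper uses $N|N_{22}>0$ to get a compatible set with nonempty interior \cite[Thm.~3.2]{DDCQMI}, and then adapts \cite[Lem.~4.9]{Henkbook}. You also misread the inertia. $\operatorname{In}(S)=(p,0,m)$ lists (negative, zero, positive) eigenvalues, so $S$ is nonsingular with exactly $m$ \emph{positive} eigenvalues, e.g.\ $S=\operatorname{diag}(\gamma^2 I_m,-I_p)$. No S-lemma, no $\alpha$ and no limit $t\to\infty$ are needed:
\begin{enumerate}
\item Let $P\ge 0$ be the common solution. Fix one compatible system and $x\in\ker P$.
\item Evaluating \eqref{dissipativity QMI definition} at $\begin{bmatrix} tx\\ u\end{bmatrix}$ gives $s(u,tCx+Du)\ge (tAx+Bu)^\top P(tAx+Bu)\ge 0$ for all $t\in\mathbb{R}$ and $u\in\mathbb{R}^m$.
\item If $Cx\neq 0$, the map $(t,u)\mapsto(u,tCx+Du)$ is injective. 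So $S\ge 0$ on an $(m+1)$-dimensional subspace. That subspace must meet the $p$-dimensional negative eigenspace of $S$ nontrivially, which is impossible. Hence $Cx=0$ for \emph{every} compatible $C$.
\item Suppose $x\neq 0$. Take $(A_0,B_0,C_0,D_0)$ in the interior of the compatible set. For small $\varepsilon>0$ and any nonzero $e\in\mathbb{R}^p$, replacing $C_0$ by $C_0+\varepsilon e x^\top$ keeps the system compatible.
\item Subtracting $C_0x=0$ from $(C_0+\varepsilon e x^\top)x=0$ gives $\varepsilon\|x\|_2^2\,e=0$, which is impossible. Hence $\ker P=\{0\}$.
\end{enumerate}
With your reading ($m$ negative eigenvalues) the dimension count fails. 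Also, nonnegativity of $s$ on an open cone of $(u,y)$ is no contradiction for an indefinite $S$. So the fallback as written would not go through.
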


By \cite[Thm.~3.2]{DDCQMI}, $N|N_{22} > 0$ implies that the system set has a nonempty interior. Therefore, the proof of Lemma~\ref{lemma P pd} follows by adapting the proof of \cite[Lem.~4.9]{Henkbook}.

The following lemma is quoted from \cite[Lem.~4.11]{Henkbook}, which will be useful in deriving the analysis approach. 
\begin{lemma} \label{lemma S inv}
    Let $P \in \mathbb{S}^n$ and $S \in \mathbb{S}^{m,p}$, and let $(A,B,C,D)$ be any system in $\Sigma_{(U_-,X,Y_-)}$. Assume that $P > 0$ and $\operatorname{In}(S) = (p,0,m)$. Define $R := P^{-1}$ and partition $-S^{-1}$ as
    \begin{equation} \label{definition minus S inv}
        -S^{-1} := \begin{bmatrix}
        F & G\\
        G^{\top} & H
    \end{bmatrix},
    \end{equation}
    where $F \in \mathbb{S}^m$. Then, \eqref{dissipativity QMI definition} holds if and only if
    \begin{equation} \label{dissipativity QMI}
    \begin{bmatrix}
    I \!&\! 0\\
    0 \!&\! I\\
    A^{\top} \!&\! C^{\top}\\
    B^{\top} \!&\! D^{\top}
\end{bmatrix}^{\!\!\!\top}\!\!\begin{bmatrix}
    R & 0 & 0 & 0\\
    0 & H & 0 & -G^{\top}\\
    0 & 0 & -R & 0\\
    0 & -G & 0 & F\\
\end{bmatrix}\!\!\begin{bmatrix}
    I \!&\! 0\\
    0 \!&\! I\\
    A^{\top} \!&\! C^{\top}\\
    B^{\top} \!&\! D^{\top}
\end{bmatrix} \!\geq 0.
\end{equation}
\end{lemma}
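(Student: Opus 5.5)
The statement is the classical dualization lemma, so my plan is to recast both sides as sign conditions of one nonsingular quadratic form on a pair of mutually orthogonal subspaces.

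\emph{Setting up the primal form.} Order the variables as $(x,u,x_+,y)$. Define $\Pi \in \mathbb{S}^{2n+m+p}$ by
$\Pi := \begin{bmatrix} P & 0 & 0 & 0\\ 0 & S_{11} & 0 & S_{12}\\ 0 & 0 & -P & 0\\ 0 & S_{21} & 0 & S_{22}\end{bmatrix}$.
Then \eqref{dissipativity QMI definition} says exactly that $v^\top \Pi v \geq 0$ for all $v \in \mathcal{V} := \operatorname{im} \begin{bmatrix} I & 0 & A^\top & C^\top\\ 0 & I & B^\top & D^\top\end{bmatrix}^\top$, where $\dim\mathcal{V} = n+m$.

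\emph{Computing the inverse and the inertia.} Up to a permutation, $\Pi$ is block diagonal with blocks $P$, $-P$ and $S$. Hence $\Pi$ is nonsingular and $\Pi^{-1}$ is the same permuted block-diagonal matrix with blocks $R$, $-R$ and $S^{-1}$. Reading $\operatorname{In}(S)=(p,0,m)$ as $p$ negative and $m$ positive eigenvalues, $\Pi$ has $n+m$ positive and $n+p$ negative eigenvalues. The number of positive eigenvalues therefore equals $\dim\mathcal{V}$.

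\emph{The dual subspace.} The orthogonal complement is $\mathcal{V}^\perp = \operatorname{im}\begin{bmatrix} -A^\top & -C^\top\\ -B^\top & -D^\top \\ I & 0\\ 0 & I\end{bmatrix}$. Using the partition \eqref{definition minus S inv} of $-S^{-1}$, one checks by direct computation that $w^\top(-\Pi^{-1})w \geq 0$ for all $w \in \mathcal{V}^\perp$ is precisely \eqref{dissipativity QMI}. This needs a reordering of rows and a sign flip on the $(x,u)$ block, which leaves the quadratic form unchanged because the $-R$ and $R$ blocks only rescale by $\pm1$. So the lemma reduces to the abstract claim: for nonsingular $\Pi$ with $n_+(\Pi) = \dim\mathcal{V}$, $\Pi \geq 0$ on $\mathcal{V}$ if and only if $-\Pi^{-1} \geq 0$ on $\mathcal{V}^\perp$.

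\emph{Proving the abstract claim.} First I would establish the strict version. If $\Pi > 0$ on $\mathcal{V}$, then $\mathcal{V}$ is a maximal positive subspace. The $\Pi$-orthogonal complement $\Pi^{-1}\mathcal{V}^\perp$ then has dimension equal to the number $n_-$ of negative eigenvalues and meets $\mathcal{V}$ trivially, so $\mathbb{R}^{2n+m+p} = \mathcal{V} \oplus \Pi^{-1}\mathcal{V}^\perp$. In a basis adapted to this splitting $\Pi$ is block diagonal, and Sylvester's law of inertia forces $\Pi < 0$ on $\Pi^{-1}\mathcal{V}^\perp$. Substituting $z = \Pi^{-1}w$ gives $w^\top\Pi^{-1}w < 0$ on $\mathcal{V}^\perp$. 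The converse follows by the symmetric argument with $-\Pi^{-1}$, whose positive count is $n_- = \dim\mathcal{V}^\perp$.

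\emph{Main obstacle: the non-strict case.} The direct-sum decomposition can fail when $\Pi$ is only semidefinite on $\mathcal{V}$. I would handle this by perturbation. Replace $\Pi$ by $\Pi_\varepsilon := \Pi + \varepsilon \Gamma$ with $\Gamma \geq 0$ chosen so that $\Pi_\varepsilon > 0$ on $\mathcal{V}$, keeping its inertia unchanged for small $\varepsilon$; for example, perturb $P$ and the positive eigenspace of $S$ by the same factor. The strict result then gives $-\Pi_\varepsilon^{-1} > 0$ on $\mathcal{V}^\perp$, and letting $\varepsilon \to 0$, with $\Pi_\varepsilon^{-1} \to \Pi^{-1}$ by nonsingularity, yields the non-strict inequality. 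The reverse direction is the same argument applied to $-\Pi^{-1}$. The delicate point is to choose $\Gamma$ so that strictness on $\mathcal{V}$ is achieved while the inertia stays exactly $(n+p,0,n+m)$; if this becomes awkward, I would fall back on citing the non-strict dualization result in \cite{Henkbook}, as the paper itself does.
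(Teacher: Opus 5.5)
Your proof is correct. The paper gives no argument of its own for this lemma; it simply quotes \cite[Lem.~4.11]{Henkbook}. So what you supply is a self-contained replacement for that citation, via the abstract dualization lemma.

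The reduction is sound. In the ordering $(x,u,x_+,y)$, $\Pi$ is a permutation of $\operatorname{diag}(P,-P,S)$. Hence $P>0$ and $\operatorname{In}(S)=(p,0,m)$ give exactly nonsingularity of $\Pi$ and $n_+(\Pi)=n+m=\dim\mathcal V$, which is precisely where the hypotheses enter. Write $w\in\mathcal V^\perp$ as $w=(-\xi_+,-\zeta,\xi,\eta)$ with $\xi_+=A^\top\xi+C^\top\eta$ and $\zeta=B^\top\xi+D^\top\eta$. Then $w^\top(-\Pi^{-1})w=\xi^\top R\xi-\xi_+^\top R\xi_++\zeta^\top F\zeta-2\zeta^\top G\eta+\eta^\top H\eta$, which is the quadratic form in \eqref{dissipativity QMI}. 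Your strict-case argument via the splitting $\mathcal V\oplus\Pi^{-1}\mathcal V^\perp$ and Sylvester's law is also fine.

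There are two small corrections.

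First, your stated reason for the sign flip is off. Flipping the sign of the $(x,u)$ coordinates is not neutral for every block. It is exactly what turns the off-diagonal blocks $G,G^\top$ of $-S^{-1}$ into the $-G,-G^\top$ appearing in the statement. Only the diagonal blocks $\pm R$, $F$, $H$ are unaffected.

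Second, the step you call delicate is not. Take $\Gamma=I$ and $0<\varepsilon<\min_i|\lambda_i(\Pi)|$. Then $\Pi+\varepsilon I>0$ on $\mathcal V$, it has the same inertia as $\Pi$, and $(\Pi+\varepsilon I)^{-1}\to\Pi^{-1}$. No fallback to the citation is needed.

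You can even avoid perturbation altogether. Suppose some $z\in\Pi^{-1}\mathcal V^\perp$ had $z^\top\Pi z>0$. Then $z\notin\mathcal V$, because $z^\top\Pi z=0$ for every $z\in\mathcal V\cap\Pi^{-1}\mathcal V^\perp$. Since $v^\top\Pi z=0$ for all $v\in\mathcal V$, the space $\mathcal V+\operatorname{span}\{z\}$ would be a subspace of dimension $n_+(\Pi)+1$ on which $\Pi\geq 0$. It would therefore meet the negative eigenspace of $\Pi$ nontrivially, a contradiction. This gives the non-strict implication directly, and the converse is the same argument applied to $-\Pi^{-1}$ on $\mathcal V^\perp$, using $n_+(-\Pi^{-1})=n_-(\Pi)=\dim\mathcal V^\perp$.
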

\vspace{1em}

With Lemma~\ref{lemma S inv}, we apply Theorem~\ref{theorem main nonstrict} to derive the conditions for informativity. This leads to the following theorem. 

\begin{theorem} \label{theorem dissipativity}
Define 
$$
Q_{22} :=\begin{bmatrix}
   X_- \\
    U
\end{bmatrix}\xi\begin{bmatrix}
   X_- \\
    U
\end{bmatrix}^{\top},
$$
and then,
$$
    Q := \begin{bmatrix}
        (N | N_{22})I + Q_{12} Q_{22}^{\dagger} Q_{21} & Q_{12} \\
        Q_{21} & Q_{22}
    \end{bmatrix},
$$
where $\vecrz (Q_{12}) = N_{21}$. Assume that $N|N_{22} > 0$ and $\operatorname{In}(S) = (p,0,m)$. Partition $-S^{-1}$ as in \eqref{definition minus S inv}. The data $(U_-,X,Y_-)$ are informative for dissipativity with respect to the supply rate \eqref{supply rate} if and only if there exist a $R \in \mathbb{S}^n$ with $R > 0$ and a scalar $\alpha \geq 0$ such that
\begin{equation} \label{theorem dissipativity LMI}
    \begin{bmatrix}
    R & 0 & 0 & 0\\
    0 & H & 0 & -G^{\top}\\
    0 & 0 & -R & 0\\
    0 & -G & 0 & F\\
\end{bmatrix} - \alpha Q \geq 0.
\end{equation}
\end{theorem}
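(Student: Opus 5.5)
The proof combines Lemma~\ref{lemma P pd}, Lemma~\ref{lemma S inv} and Theorem~\ref{theorem main nonstrict}. Since $N|N_{22}>0$ and $\operatorname{In}(S)=(p,0,m)$, Lemma~\ref{lemma P pd} shows that informativity for dissipativity is equivalent to the existence of a $P>0$ such that \eqref{dissipativity QMI definition} holds for all $(A,B,C,D)\in\Sigma_{(U_-,X,Y_-)}$. For each such $P$, Lemma~\ref{lemma S inv} with $R:=P^{-1}$ shows that \eqref{dissipativity QMI definition} is equivalent to the dualized inequality \eqref{dissipativity QMI}. Hence informativity is equivalent to the existence of $R>0$ such that \eqref{dissipativity QMI} holds for all compatible systems. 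The map $P\mapsto P^{-1}$ is a bijection on positive definite matrices, so no generality is lost in this step.

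Next I cast \eqref{dissipativity QMI} in the form of Theorem~\ref{theorem main nonstrict}, with $p$ replaced by $n+p$ and $q$ by $n+m$. Put
\[
Z:=\begin{bmatrix} A & B\\ C & D\end{bmatrix}^{\top}\in\mathbb{R}^{(n+m)\times(n+p)}
\]
and let $M$ be the middle matrix in \eqref{dissipativity QMI}, where the first block row corresponds to $I_{n+p}$ (blocks $R$ and $H$) and the second to $Z$ (blocks $-R$ and $F$). After a permutation of block rows and columns, \eqref{dissipativity QMI} reads $Z\in\mathcal{Z}_{n+m}(M)$. I must check that this permutation is consistent with the ordering of $\vecrz$ used in $Q$; I expect it is, by the same bookkeeping as in Section~\ref{section The general M-control problem}.

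By \eqref{vec ABCD in ZN}, $(A,B,C,D)$ is compatible if and only if $\vecrz(Z^{\top})\in\mathcal{Z}_{(n+p)(n+m)}(N)$. So the condition is exactly the implication \eqref{implication N to M nonstrict}. The structural hypothesis of Theorem~\ref{theorem main nonstrict} holds because $\Xi_{22}=\xi\otimes I_{n+p}$. Therefore
\[
N_{22}=\left(\begin{bmatrix} X_-\\ U_-\end{bmatrix}\otimes I\right)(\xi\otimes I)\left(\begin{bmatrix} X_-\\ U_-\end{bmatrix}^{\top}\otimes I\right)=Q_{22}\otimes I_{n+p},
\]
using the mixed product rule. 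Also $N\in\mathbf{\Pi}_{1,(n+p)(n+m)}$, as noted earlier. With $N|N_{22}>0$, the second part of Theorem~\ref{theorem main nonstrict} gives the implication if and only if some $\alpha\ge0$ satisfies $M-\alpha Q\ge0$. This is \eqref{theorem dissipativity LMI}.

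The main obstacle is the index bookkeeping: I must confirm that the $Q$ built from $N$ via $\vecrz(Q_{12})=N_{21}$ has the block layout that matches $M$ in \eqref{theorem dissipativity LMI}. That requires checking that $\vecrz(Z^{\top})$ is the vectorization appearing in $N$. Everything else is a direct chain of the cited equivalences.
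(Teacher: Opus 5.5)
Your proof is correct and follows the paper's argument: Lemma~\ref{lemma P pd} and Lemma~\ref{lemma S inv} (with $R=P^{-1}$) reduce informativity to the implication \eqref{implication N to M nonstrict}, and Theorem~\ref{theorem main nonstrict} turns that implication into \eqref{theorem dissipativity LMI}; the only difference is that the paper proves the ``if'' direction directly through Lemma~\ref{lemma general inclusion}, which is the same step. The bookkeeping you flag is already settled. The middle matrix in \eqref{dissipativity QMI} is partitioned as $(n+p)+(n+m)$ with $Z=\begin{bmatrix} A & B\\ C & D\end{bmatrix}^{\top}$ in the lower block, so no permutation is needed, and \eqref{vec ABCD in ZN} vectorizes exactly $Z^{\top}$.
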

\vspace{1em}
\begin{proof}
    Clearly, $Q_{22}$ satisfies $N_{22} = Q_{22} \otimes I_{n+p}$. To prove the theorem, we first suppose that there exist $R$ and $\alpha$ satisfying \eqref{theorem dissipativity LMI}. Let a system $(A,B,C,D) \in \Sigma_{(U_-,X,Y_-)}$. Then, \eqref{vec ABCD in ZN} holds. It follows from Lemma~\ref{lemma general inclusion} that
    $$
    \begin{bmatrix}
        A & B\\
        C & D
    \end{bmatrix}^{\top} \in \mathcal{Z}_{n+m}(Q).
    $$
    Multiply \eqref{theorem dissipativity LMI} from left by $\begin{bmatrix}
        I & 0 & A & B\\
        0 & I & C & D
    \end{bmatrix}$ and from right by its transpose. This yields \eqref{dissipativity QMI}. Define $P:=R^{-1}$. By Lemma~\ref{lemma S inv}, \eqref{dissipativity QMI definition} holds. Since this argument holds for all $(A,B,C,D) \in \Sigma_{(U_-,X,Y_-)}$, we conclude that the data $(U_-,X,Y_-)$ are informative for dissipativity with respect to the supply rate \eqref{supply rate}.

    Next, we suppose that the data $(U_-,X,Y_-)$ are informative for dissipativity with respect to the supply rate \eqref{supply rate}. By Lemma~\ref{lemma P pd}, there exists a $P > 0$ such that \eqref{dissipativity QMI definition} holds for all $(A,B,C,D) \in \Sigma_{(U_-,X,Y_-)}$. Define $R:=P^{-1}$. By Lemma~\ref{lemma S inv}, \eqref{dissipativity QMI} holds. Using Theorem \ref{theorem main nonstrict}, there exists $\alpha \geq 0$ satisfying \eqref{theorem dissipativity LMI}. This proves the theorem.
\end{proof}

\section{Illustrative examples} \label{section Illustrative examples}
In this section, we illustrate our theoretical results with three examples. The simulations are conducted in MATLAB, using YALMIP \cite{Yalmip} with the solver MOSEK \cite{mosek}.

\subsection{Dissipativity verification}
In this example, we consider an RLC circuit as depicted in Fig.~\ref{fig:RLC}. The dynamics can be described by
$$
    \begin{bmatrix}
        \dot{V}_{C_1} \\ \dot{V}_{C_2} \\ \dot{I}_L
    \end{bmatrix} = \begin{bmatrix}
        -\frac{1}{C_1R_3} & \frac{1}{C_1R_3} & \frac{1}{C_1}\\
        \frac{1}{C_2R_3} & -\frac{1}{C_2R_3} & 0\\
        -\frac{1}{L} & 0 & -\frac{R_2}{L}\end{bmatrix}\begin{bmatrix}
        V_{C_1} \\ V_{C_2} \\ I_L
    \end{bmatrix} + \begin{bmatrix}
        0 \\ 0\\
        \frac{1}{L}
    \end{bmatrix} V.
$$
The circuit parameters are given by resistances $R_1
= R_2 = 2\mathrm{\Omega}$ and $R_3
= 1\mathrm{\Omega}$, inductance $L = 1\mathrm{H}$ and capacitances $C_1 = 1\mathrm{F}$ and $C_2 = 0.5\mathrm{F}$. We choose the input voltage $V$ as the input and the capacitor voltages $V_{C_1}$, $V_{C_2}$ together with the inductor current $I_L$ as the states. We choose the main loop current
$$
I = \frac{V}{R_1} + I_L,
$$
as the output. We apply zero-order-hold discretization with the step size $T_s = 0.1\mathrm{s}$. This leads to a discrete-time system $(A_s,B_s,C_s,D_s)$, where
$$
\begin{aligned}
A_s &= \begin{bmatrix}
    0.9092  &  0.0863 &   0.0863\\
    0.1725  &  0.8272 &   0.0085\\
   -0.0863 &  -0.0042  &  0.8145
\end{bmatrix}\!, &
B_s &= \begin{bmatrix}
    0.0045\\
    0.0003\\
    0.0905
\end{bmatrix}\!,\\
C_s &= \begin{bmatrix}
    0 & 0 & 1
\end{bmatrix}\!,\  &D_s &= 0.5.
\end{aligned}
$$

\begin{figure}[t!]
\centering
\begin{circuitikz}[scale=0.8]
\ctikzset{bipoles/length=1cm}
\draw
% Source
(0,0) to[V, l=$V$] (0,3)
      -- (2.5,3)
      to[R, l=$R_2$] (4,3)
      to[L, l=$L$] (5.5,3)
      -- (8,3)
      -- (8,2.8)
      to[R, l=$R_3$] (8,1.5)
      to[C, l=$C_2$] (8,0.2)
      -- (8,0)
      -- (0,0);

\draw
(2,3) to[R, l=$R_1$] (2,0);

\draw
(6,3) to[C, l=$C_1$] (6,0);

\end{circuitikz}
\caption{The RLC circuit.}
\label{fig:RLC}
\end{figure}

We then apply Theorem~\ref{theorem dissipativity} to verify that $(A_s,B_s,C_s,D_s)$ is passive, i.e., dissipative with respect to the supply rate $s(u,y) = uy$. In the experiment, we assume the parameter matrices are unknown. The initial state and the entries of the inputs are drawn independently and randomly from a Gaussian distribution with zero mean and variance 10. To generate noise sequences satisfying an energy constraint, we construct $\begin{bmatrix}
    w(0)^{\top} \!\!&\!\! v(0) \!\!&\!\! \cdots  \!\!&\!\! w(T-1)^{\top}  \!\!&\!\! v(T-1)
\end{bmatrix}^{\top} \in \mathbb{R}^{4T}$ by drawing a direction uniformly from the unit sphere in $\mathbb{R}^{4T}$ and a radius uniformly from $[0,c]$, where $T  \in \{ 10, 15, 20, 25, 30\}$ and $c \in \{0.1, 0.2, 0.3, 0.4\}$. For each pair $(c,T)$, we generate 100 datasets of input, state and output data samples satisfying $N|N_{22} > 0$. The conditions of Theorem~\ref{theorem dissipativity} are then verified with $\Xi$ defined by $\Xi_{11} = c^2$, $\Xi_{12} = \Xi_{21}^{\top} = 0$ and $\Xi_{22} = -I$. We record the percentage of datasets from which the corresponding inequality \eqref{theorem dissipativity LMI} is feasible. The results are collected in Table~\ref{tab:example}.

The approach in Theorem~\ref{theorem dissipativity} captures the energy constraint through a Frobenius norm bound on the noise matrix. For comparison, we apply the approach in \cite[Thm.~3]{dissipativity2022} with $\Phi_{11} = c^2 I$, $\Phi_{12} = \Phi_{21}^{\top} = 0$ and $\Phi_{22} = -I$ to the same datasets. This approach employs a QMI-based noise model that captures a spectral norm bound on the noise matrix and therefore provides an overapproximation of the energy constraint. The results are also collected in Table~\ref{tab:example}.

It is observed that as the noise level increases, the percentage of datasets for which both approaches are feasible decreases. This phenomenon arises because higher noise levels enlarge the set of systems compatible with the data, making it more difficult to find a common storage function for all data-compatible systems. Conversely, we noticed that in this example increasing $T$ leads to improved feasibility. In addition, the percentages corresponding to the Frobenius norm bound are consistently larger than those corresponding to the spectral norm bound across all 20 cases. This illustrates that the QMI-based overapproximation of energy constraints can introduce conservatism into data-driven dissipativity analysis.

\begin{table}[!t]
    \centering
    \caption{Feasibility rates (\%) for different $c$ and $T$ values}
    \label{tab:example}
    \setlength{\tabcolsep}{4.5pt} % Slightly tighter padding to fit subcolumns beautifully
    \begin{tabular}{|c|cc|cc|cc|cc|}
    \hline
     & \multicolumn{2}{c|}{$c = 0.1$} & \multicolumn{2}{c|}{$c = 0.2$} & \multicolumn{2}{c|}{$c = 0.3$} & \multicolumn{2}{c|}{$c = 0.4$} \\ 
     & Frob. & Spec. & Frob. & Spec. & Frob. & Spec. & Frob. & Spec. \\
    \hline
    $T=10$ & \textbf{48} & 44 & \textbf{18} & 12 & \textbf{4}  & 2  & \textbf{2}  & 0  \\ 
    $T=15$ & \textbf{72} & 69 & \textbf{39} & 34 & \textbf{17} & 11 & \textbf{6}  & 2  \\ 
    $T=20$ & \textbf{80} & 75 & \textbf{58} & 48 & \textbf{34} & 22 & \textbf{23} & 15 \\ 
    $T=25$ & \textbf{84} & 80 & \textbf{66} & 57 & \textbf{43} & 36 & \textbf{31} & 21 \\ 
    $T=30$ & \textbf{92} & 88 & \textbf{76} & 71 & \textbf{54} & 46 & \textbf{39} & 32 \\ 
    \hline
    \end{tabular}
\end{table}

\subsection{A numerical example on quadratic stabilization} \label{section A numerical example on quadratic stabilization}
In the following, we consider an unstable system of the form \eqref{eq real system} with $A_s$ and $B_s$ given by
$$
A_s \!=\! \begin{bmatrix}
    -0.795 &\! 0.231 &\! 0.312\\ 
     0.123 &\! 1.203 &\! -0.207\\
     -0.328 &\! 0.269 &\! 0.202
\end{bmatrix}\!,\ 
B_s \!=\! \begin{bmatrix}
    0.523 &\! -0.298\\
    0.697 &\! -1.605\\
    -0.348 &\! 0.452
\end{bmatrix}\!.
$$
We draw the initial state and the entries of the inputs independently and randomly from a Gaussian distribution with zero mean and unit variance. We collect $T = 20$ input and state data samples. During the experiment, the noise samples are drawn independently and uniformly at random from the ball $\{r \in \mathbb{R}^3 \mid \|r\|_2^2 \leq \epsilon \}$, where $\epsilon \in \{0.2, 0.3, 0.4, 0.5, 0.6\}$. For each $\epsilon$, we generate 100 datasets and record the percentage of datasets from which a stabilizing controller was found for the true system $(A_s,B_s)$ using Theorem \ref{theorem stab} with $\Phi = \Phi_{\epsilon}$. A $\mathcal{W}_{\mathrm{qmi}}(\Psi_{\epsilon})$-based approach presented in \cite[Thm.~5.1(a)]{DDCQMI} is also applied with the same datasets for comparison. The results are collected in Table~\ref{tab:example stabilization}.
\begin{table}[h]
        \centering
        \caption{Percentage of datasets yielding a stabilizing controller}
        \label{tab:example stabilization}
        \setlength{\tabcolsep}{5pt}
        \begin{tabular}{|c|c|c|c|c|c|}
        \hline
          & $\epsilon = 0.2$ & $\epsilon = 0.3$  & $\epsilon = 0.4$ & $\epsilon = 0.5$ & $\epsilon = 0.6$  \\ 
        \hline
        $\mathcal{W}_{\mathrm{F}}(\Phi_{\epsilon})$-based & $98\%$ & $82\%$ & $62\%$ & $43\%$ & $23\%$ 
        \\
        $\mathcal{W}_{\mathrm{qmi}(\Psi_{\epsilon})}$-based & $83\%$ & $38\%$ & $16\%$ & $2\%$ & $0\%$ 
        \\ 
        \hline
        \end{tabular}
\end{table}

We observe that as the noise level increases, the percentage of datasets for which both methods are feasible decrease. This phenomenon arises because higher noise levels enlarge the sets of systems compatible with the data, making it more difficult to find a single stabilizer for all systems in the sets. In addition, the $\mathcal{W}_{\mathrm{F}}(\Phi_{\epsilon})$-based method consistently achieves a higher feasibility rate than the $\mathcal{W}_{\mathrm{qmi}}(\Psi_{\epsilon})$-based method across all five cases. This observation supports the discussion in Section \ref{section Discussion on the instantaneous noise bound} regarding the reduced conservatism of the proposed approach.

\subsection{$\mathcal{H}_{\infty}$ control of a pendulum-cart system} \label{section a pendulum cart system}
We consider a linearized and discretized state-space model of an inverted pendulum with pivot mounted on a cart, as depicted in Fig.~\ref{fig:pend cart}. The original dynamic equations are given by
\begin{equation} \label{example H infty dynamic eq}
\begin{aligned}
    (M + m) \ddot{x} + b\dot{x} + ml\ddot{q}\cos(q) - ml\dot{q}^2\sin(q) = F&,\\
    l\ddot{q} - g\sin(q) = - \ddot{x} \cos(q)&,
\end{aligned}
\end{equation}
where $x,q,F \in \mathbb{R}$. We denote by $m$ and $l$ the mass and the length of the pendulum. The mass of the cart is denoted by $M$. The parameter $b$ denotes the viscous friction coefficient between the cart and the ground. The acceleration due to gravity is denoted by $g$. Let $x$ denote the horizontal displacement of the cart, $q$ the angle of the pendulum from the unstable equilibrium and $F$ the force applied to the cart. Consider $u=F$ as the input and $\zeta =\begin{bmatrix}
    \dot{x} & q & \dot{q}
\end{bmatrix}^{\top}$ as the state. In this example, we let the true parameters take 
$$
\begin{aligned}
    M &= 2 \mathrm{kg}, & m &= 1\mathrm{kg}, & b &= 0.1 \mathrm{N\cdot s^2/m}, \\
    l &=0.5\mathrm{m}, &  g &= 9.8\mathrm{m/s^2}. &  &
\end{aligned}
$$
The units of $F$, $x$, $\dot{x}$, $q$ and $\dot{q}$ are $\mathrm{N}$, $\mathrm{m}$, $\mathrm{m/s}$, $\mathrm{rad}$ and $\mathrm{rad/s}$, respectively. We linearize \eqref{example H infty dynamic eq} around $\zeta=0$ and then apply zero-order-hold discretization with the step size $T_s = 0.02\mathrm{s}$. After incorporating the noise $w(t) \in \mathbb{R}^{3}$, we obtain a linear discrete-time system model of the form \eqref{eq real system} with 
$$
A_s \!=\! \begin{bmatrix}
    0.9990 & -0.0981 & -0.0010\\
    0 & 1.0060 & 0.0200\\
    0.0020 & 0.5891 & 1.0060
\end{bmatrix} \text{ and }
B_s \!=\! \begin{bmatrix}
    0.0100\\
    -0.0002\\
    -0.0200
\end{bmatrix}\!.
$$
We denote the performance output as in \eqref{eq output performance} with
$$
C = \begin{bmatrix}
    0 & 1 & 0
\end{bmatrix} \text{ and } D = 0.
$$ 

\begin{figure}[t]
    \centering
    \begin{tikzpicture}[>=Stealth, scale=0.8]
    % Ground
\draw[thick] (-3,-0.4) -- (3,-0.4);

% Cart
\draw[thick, fill=gray!20] (-1.5,0) rectangle (1.5,3);

% Wheels
\draw[thick] (-0.6,-0.2) circle (0.2);
\draw[thick] (0.6,-0.2) circle (0.2);

% Pendulum
\coordinate (pivot) at (0,1.5);
\coordinate (mass) at (0.8,2.7);

\draw[thick] (pivot) -- (mass);
\filldraw[black] (mass) circle (0.15);

% Labels
\node[right] at (0.9,2.7) {$m$};
\node[left] at (-0.55,0.7) {$M$};

% Angle arc
\draw[->] (0,2.2) arc[start angle=90, end angle=65, radius=1];
\node at (0.27,2.5) {$q$};

% Force arrow
\draw[->, thick] (-2.5,1.5) -- (-1.5,1.5);
\node[right] at (-2.3,1.85) {$F$};

% Length dashed line
\draw[dashed] (pivot) -- (0,3);
\node[left] at (0.9,2.1) {$l$};

% x direction arrow at top right of dashed line
\draw[thick] (1.5,0.0) -- (1.5,-0.2);
\draw[->, thick] (1.5,-0.2) -- (2.5,-0.2);
\node[above] at (2.0,-0.2) {$x$};
    
    \end{tikzpicture}
    \caption{An inverted pendulum on a cart}
    \label{fig:pend cart}
\end{figure}

Assume that a control gain $K_0 = \begin{bmatrix}
    0.25 & 35 & 4
\end{bmatrix}$ is known such that
$A_s + B_s K_0$ is stable. We aim to design a $H_{\infty}$ controller for the true system $(A_s,B_s)$ that improves the performance of the stabilizing controller $K_0$. In the experiment, we choose the input $u(t) = K_0 \zeta(t) + u_r(t)$, where $u_r(t)$ is drawn independently and randomly from a Gaussian distribution with zero mean and unit variance. The initial state is drawn independently from the same distribution. We collect $T\in \{20,40,\dots,200 \}$ input and state data samples. The entries of the noise samples $w(t)$ are drawn independently and uniformly at random from the ball $\{r \in \mathbb{R}^3\mid \|r\|_2^2 \leq \epsilon \}$ with $\epsilon = 10^{-6}$. With the fixed $\epsilon$, we set $\Phi = \Phi_{\epsilon}$ for each $T$. We compute an $\mathcal{H}_{\infty}$ controller for the unknown true systems $(A_s,B_s)$ by applying Theorem \ref{theorem H infty}. For numerical reasons, we introduce a new decision variable $\hat{\gamma}:=-\frac{1}{\gamma^2}$ and formulate the problem as a linear optimization in $\hat{\gamma}$. The smallest performance $\gamma$ is obtained by minimizing $\hat{\gamma}$ subject to \eqref{H infty QMI} and \eqref{H infty others}. For comparison, a $\mathcal{W}_{\mathrm{qmi}}(\Psi_{\epsilon})$-based method, adapted from \cite[Thm.~8.14]{Henkbook}, is also applied using the same datasets. Since feasibility advantage of $\mathcal{W}_{\mathrm{F}}(\Phi_{\epsilon})$-based method over the $\mathcal{W}_{\mathrm{qmi}}(\Psi_{\epsilon})$-based method still holds, we restrict attention to datasets for which both methods are feasible. In particular, for each $T$, we generate 100 datasets that yields a stabilizing controller using both methods and record the average values of the resulting $\gamma$. The results are presented in Fig.~2. The optimal $\gamma$ for the closed-loop system with respect to $(A_s, B_s)$ is $5.7185$. This value is a lower bound on the best achievable $\mathcal{H}_{\infty}$ performance, which is also indicated in the figure. 

It is observed that as $T$ increases, the average $\gamma$ obtained by both methods decreases. Moreover, the $\mathcal{W}_{\mathrm{F}}(\Phi_{\epsilon})$-based method yields a lower average value of $\gamma$ than the $\mathcal{W}_{\mathrm{qmi}}(\Psi_{\epsilon})$-based method in all ten cases. 
\begin{figure}[t]
    \centering
    \hspace{-0.5cm}
    \begin{tikzpicture}
\begin{axis}[
    width=9cm,        
    height=4cm,       
    xlabel={The number of samples $T$},
    ylabel={Average value of $\gamma$},
    grid=major,
    legend style={font=\normalsize},
    legend cell align=left
]

\addplot[
    smooth,
    blue,
    mark=*,
    mark size=1pt
] coordinates {
    (20,23.1998)
    (40,15.7165)
    (60,12.4163)
    (80,11.8540)
    (100,11.4231)
    (120,10.0374)
    (140,9.7363)
    (160,8.7367)
    (180,8.8786)
    (200,8.7227)
};
% \addlegendentry{$\mathcal{W}_{\mathrm{qmi}}$-based}

\addplot[
    smooth,
    red,
    mark=*,
    mark size=1pt
]  coordinates {
    (20,17.0955)
    (40,10.3711)
    (60,9.0763)
    (80,7.7449)
    (100,7.2451)
    (120,6.8875)
    (140,6.8542)
    (160,6.8586)
    (180,6.7493)
    (200,6.6921)
};
% \addlegendentry{$\mathcal{W}_{\mathrm{F}}$-based}

\addplot[
    smooth,
    black,
    mark=*,
    mark size=1pt
]  coordinates {
    (20,5.7185)
    (40,5.7185)
    (60,5.7185)
    (80,5.7185)
    (100,5.7185)
    (120,5.7185)
    (140,5.7185)
    (160,5.7185)
    (180,5.7185)
    (200,5.7185)
};
% \addlegendentry{Optimal}
\end{axis}
\end{tikzpicture}
    \caption{Average $\mathcal{H}_{\infty}$ performance as a function of the number of samples $T$. The blue curve corresponds to the $\mathcal{W}_{\mathrm{qmi}}(\Psi_{\epsilon})$-based approach, the red curve corresponds to the $\mathcal{W}_{\mathrm{F}}(\Phi_{\epsilon})$-based approach, and the black curve shows the optimal performance.}
    \label{fig:pend cart}
\end{figure}

\section{Conclusion} \label{section Conclusion}
In this work, we introduced a novel noise model and derived necessary and sufficient conditions for data-driven control with respect to this model. In particular, we considered a class of design problems that can be captured by QMIs, including quadratic stabilization and the incorporation of $\mathcal{H}_2$ and $\mathcal{H}_{\infty}$ performance specifications. We further extended the framework to data-driven analysis, offering verification conditions for quadratic stabilizability and dissipativity of all systems compatible with the data. 

In contrast to existing QMI noise models that is a generalization of spectral norm bounds, the proposed model is a generalization of Frobenius norm bounds. For instantaneously bounded noise, the new model provides a less conservative overapproximation. The reduced conservatism leads to improved feasibility in control design, while the resulting design conditions preserve the same computational benefits. 

The proposed data-driven control approaches are based on a new type of S-lemma. This S-lemma provides necessary and sufficient conditions under which a QMI is implied by a quadratic inequality in vectorized variables. To obtain computationally tractable conditions, we introduce an additional structural assumption. Relaxing this assumption to broaden the applicability of the framework remains an interesting direction for future research. 

\section*{Appendix}

\noindent\hspace{2em}{\itshape Proof of Proposition~\ref{proposition preliminaries}: }
We first focus on statement \ref{proposition statement multiple multipliers}. Suppose that there exist $\theta,\alpha_0,\alpha_1, \dots, \alpha_{T-1}$ and $\beta$ satisfying \eqref{proposition multiple multipliers LMI}. Let $(A,B)$ be a system in $\Sigma_{\mathcal{D}}(\mathcal{W}_{\mathrm{ib}}(\epsilon))$. Multiply \eqref{proposition multiple multipliers LMI} from left by $\begin{bmatrix}
    I & A & B
\end{bmatrix}$ and from right by its transpose. For $t = 0,1,\dots,T-1$, we have that $x(t+1) = Ax(t) + Bu(t) + w(t)$ and $\|w(t)\|_2^2 \leq \epsilon$. This yields that \eqref{definition 1 M theta A B} holds. Since this argument holds for all $(A,B) \in \Sigma_{\mathcal{D}}(\mathcal{W}_{\mathrm{ib}}(\epsilon))$, the data $\mathcal{D}$ are informative for $M$-control with respect to $\mathcal{W}_{\mathrm{ib}}(\epsilon)$.

Next, we prove statement \ref{proposition statement a QMI set}. The proof of the ``if" part follows by adapting the proof of \ref{proposition statement multiple multipliers}. It suffices to prove the ``only if" part. Suppose that the data $\mathcal{D}$ are informative for $M$-control with respect to $\mathcal{W}_{\mathrm{qmi}}(\Psi)$. This implies that there exists a $\theta \in \Theta$ satisfying \eqref{definition 1 M theta A B} for all $(A,B) \in \Sigma_{\mathcal{D}}(\mathcal{W}_{\mathrm{qmi}}(\Psi))$. It follows from the discussion in \cite[Sec.~5]{DDCQMI} that $(A,B) \in \Sigma_{\mathcal{D}}(\mathcal{W}_{\mathrm{qmi}}(\Psi))$ if and only if 
$$
\begin{bmatrix} I \\ A^\top \\ B^\top \end{bmatrix}^\top \begin{bmatrix}
    I \!\!&\!\! X_+ \\
    0 \!\!&\!\! -X_- \\
    0 \!\!&\!\! -U_-
\end{bmatrix} \Psi \begin{bmatrix}
    I \!\!&\!\! X_+ \\
    0 \!\!&\!\! -X_- \\
    0 \!\!&\!\! -U_-
\end{bmatrix}^{\!\!\top} 
\begin{bmatrix} I \\ A^\top \\ B^\top \end{bmatrix} \geq 0.
$$
By Lemma \ref{lemma QMI} there exist $\alpha$ and $\beta$ satisfying \eqref{proposition statement a QMI set LMI}. This proves the proposition.    
\hfill $\QED$

\section*{References}

\bibliographystyle{IEEEtran}
\bibliography{ReferencePapers}

\end{document}